\theoremstyle{definition}
\newcounter{counter}
\numberwithin{counter}{section}
\newtheorem{theorem}[counter]{Theorem}
\newtheorem{proposition}[counter]{Proposition}
\newtheorem{definition}[counter]{Definition}
\newtheorem{lemma}[counter]{Lemma}
\newtheorem{corollary}[counter]{Corollary}
\newtheorem{example}[counter]{Example}
\newtheorem*{theorem*}{Theorem}
\newtheorem*{lemma*}{Lemma}
\newtheorem{remark}[counter]{Remark}
\newtheorem*{remark*}{Remark*}
\newcommand{\R}{\mathbb{R}}
\newcommand{\N}{\mathbb{N}}
\newcommand{\id}{\text{id}}
\newcommand{\sa}{\text{sa}}
\newcommand{\mult}{\,\&\,}
\newcommand{\commu}{\,\lvert\,}
\newcommand{\sse}{\subseteq}
\newcommand\etc{etc\@ifnextchar.{}{.\@}\xspace}
\newcommand\ie{i.e.\@\xspace}  
\newcommand\Define[1]{\textbf{#1}}
\begin{document}

\title{Commutativity in Jordan Operator Algebras}
\author{John van de Wetering\\[0.3cm]
Radboud University Nijmegen, Netherlands\\[0.3cm]
\texttt{john@vdwetering.name}}

\date{\today}

\maketitle

\begin{abstract}
While Jordan algebras are commutative, their non-associativity makes it so that the Jordan product operators do not necessarily commute. When the product operators of two elements commute, the elements are said to \emph{operator commute}. In some Jordan algebras operator commutation can be badly behaved, for instance having elements $a$ and $b$ operator commute, while $a^2$ and $b$ do not operator commute. In this paper we study JB-algebras, real Jordan algebras which are also Banach spaces in a compatible manner, of which C$^*$-algebras are examples. We show that elements $a$ and $b$ in a JB-algebra operator commute if and only if they span an associative sub-algebra of mutually operator commuting elements, and hence operator commutativity in JB-algebras is as well-behaved as it can be. Letting $Q_a$ denote the quadratic operator of $a$, we also show that positive $a$ and $b$ operator commute if and only if $Q_a b^2 = Q_b a^2$. We use this result to conclude that the unit interval of a JB-algebra is a sequential effect algebra as defined by Gudder and Greechie.
\end{abstract}

\section{Introduction}

A Jordan algebra $(E,*,1)$ is a commutative bilinear unital algebra satisfying the Jordan equation $(a*b)*a^2 = a*(b*a^2)$. Any associative algebra $\mathfrak{A}$ (over a field of characteristic different than 2) becomes a Jordan algebra with the special Jordan product $a*b := \frac12 (ab+ba)$. 
Jordan algebras were originally studied in the context of quantum mechanics as a generalization of the space of observables of a quantum system~\cite{jordan1933}, but due to their close connection to symmetric cones, they have since then seen use in a variety of fields~\cite{faraut1994analysis,chu2011jordan,chu2017infinite}.

For a Jordan algebra $E$ we define the Jordan product operator $T_a:E\rightarrow E$ as $T_a(b) := a*b$. Two elements $a,b\in E$ are said to \Define{operator commute} when $T_aT_b = T_bT_a$. Using the commutativity of the product, this is easily seen to be equivalent to $a*(c*b) = (a*c)*b$ for all $c\in E$. Hence, all elements of a Jordan algebra operator commute if and only if the algebra is associative. It might then seem reasonable to think that the Jordan algebra generated by $a$ and $b$ (and the unit) is associative if and only if $a$ and $b$ operator commute. This is however not true (see for instance Remark 2.5.2 of~\cite{hanche1984jordan}). As such a correspondence between associative subalgebras and operator commutation is useful, there are a variety of results known about restrictions that do satisfy this equivalence. For instance if $b$ is \Define{idempotent}, $b^2 := b*b = b$, then $a$ and $b$ operator commute if and only if they generate an associative algebra~\cite[Lemma 2.5.5.]{hanche1984jordan}.

The original class of Jordan algebras to be studied were the \Define{Euclidean} Jordan algebras. These are finite-dimensional Jordan algebras over the real numbers that are \Define{formally real}: if $\sum_i^n a_i^2 = 0$ then $a_i = 0$ for all $i$. Equivalently, these are finite-dimensional algebras that are also real Hilbert spaces with the Jordan product being self-adjoint. For Euclidean Jordan algebras it is known that elements operator commute if and only if they generate an associative subalgebra\footnote{For instance, in~\cite[Lemma X.2.2]{faraut1994analysis} it is shown that elements operator commute if and only if they allow a simultaneous diagonalization. From that it easily follows that they generate an associative subalgebra.}.

Euclidean Jordan algebras were later generalized to an infinite-dimensional setting in the form of \Define{JB-algebras}. These are real Jordan algebras that are also Banach spaces with the Jordan product interacting suitably with the norm. Examples of JB-algebras include any norm-closed Jordan algebra $A\sse \mathfrak{A}_\sa$ where $\mathfrak{A}_\sa$ denotes the set of self-adjoint elements of a (unital) C$^*$-algebra. Such a Jordan subalgebra of a C$^*$-algebra is commonly referred to as a \Define{JC-algebra}. For JC-algebras it is also known that elements operator commute if and only they span an associative subalgebra\footnote{As far as the author is aware, this isn't explicitly stated anywhere, but it follows easily from~\cite[Lemma 5.1]{hanche1983structure}.}.

In this paper we will settle the question for JB-algebras. Letting $Q_a = 2T_a^2-T_{a^2}$ denote the \Define{quadratic map} of $a$ (that for associative algebras with the special Jordan product reduces to $Q_a(b) = aba$), we will prove the following.
\begin{theorem*}
    Let $A$ be a JB-algebra and $a,b\in A$ arbitrary. Then the following are equivalent.
    \begin{enumerate}[label=\alph*)]
        \item $a$ and $b$ operator commute.
        \item $a$ and $b$ generate an associative JB-algebra.
        \item $a$ and $b$ generate an associative JB-algebra of mutually operator commuting elements.
        \item $a$ and $a^2$ operator commute with $b$ and $b^2$.
    \end{enumerate}
    If one of $a$ and $b$ is positive then these statements are furthermore equivalent to $Q_a b^2 = Q_b a^2$.
\end{theorem*}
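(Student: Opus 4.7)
The implications $c)\Rightarrow b)\Rightarrow a)$, $c)\Rightarrow d)$, and $d)\Rightarrow a)$ are immediate: associativity of the Jordan product $*$ on the subalgebra generated by $a,b$ forces $T_aT_b=T_{a*b}=T_bT_a$, while $d)$ literally contains $a)$. The forward direction of the positive criterion follows from $b)$, since in a commutative associative subalgebra $Q_c(x)=c^2\cdot x$ and hence $Q_ab^2=a^2b^2=Q_ba^2$. The substantive content is $a)\Rightarrow c)$ together with the converse of the positive criterion.

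For $a)\Rightarrow c)$ I would pass to the JBW-bidual $A^{**}$, in which the spectral theorem provides spectral projections $\{p_\lambda\}$ of $a$ and $\{q_\mu\}$ of $b$, each family lying in the associative JBW-subalgebra generated by the corresponding element. The crucial lemma is that operator commutation of $a$ and $b$ in $A$ forces each $p_\lambda$ to operator commute with $b$ in $A^{**}$. Granted this, Hanche-Olsen's lemma for idempotents~\cite[Lemma~2.5.5]{hanche1984jordan} yields that $p_\lambda$ and $b$ generate an associative subalgebra, so $p_\lambda$ operator commutes with every $q_\mu$; by symmetry the families $\{p_\lambda\}$ and $\{q_\mu\}$ pairwise operator commute. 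Since $a$ and $b$ are norm-limits of real-linear combinations of their respective spectral projections (via continuous functional calculus), the JB-subalgebra of $A^{**}$ generated by $\{a,b,1\}$ is contained in the associative JBW-subalgebra generated by these mutually commuting idempotents; restricting back to $A$ delivers $c)$, and $d)$ then follows trivially.

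For the converse of the positive criterion, expanding $Q_c=2T_c^2-T_{c^2}$ reduces $Q_ab^2=Q_ba^2$ to the symmetric identity $a*(a*b^2)=b*(b*a^2)$. Assuming $a,b\geq 0$, the existence of square roots together with the fundamental identity $Q_a=Q_{\sqrt a}^2$ should allow this identity to be rewritten in terms of $\sqrt a$ and $\sqrt b$ and combined with the spectral argument above to yield operator commutation of $a$ and $b$.

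The main obstacle I foresee is the crux lemma: that spectral projections of $a$ in $A^{**}$ inherit operator commutation with $b$. This is exactly where the argument must transcend the algebraic Jordan identities --- the analogous statement fails in a general Jordan algebra, cf.~Remark~2.5.2 of~\cite{hanche1984jordan} --- and must exploit the order-norm interplay special to JB-algebras. I expect the proof to go via the characterization of operator commutation on positive elements in terms of commutation of the quadratic operators $Q_a,Q_b$ (rather than $T_a,T_b$), combined with the order-continuity of the map $f\mapsto Q_{f(a)}$ for positive $a$ in $A^{**}$.
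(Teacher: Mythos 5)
There are two genuine gaps. First, your claim that $b)\Rightarrow a)$ is ``immediate'' is wrong, and the purported identity $T_aT_b=T_{a*b}$ is a red herring: operator commutation requires $a*(c*b)=(a*c)*b$ for \emph{all} $c\in A$, whereas associativity of the subalgebra generated by $a$ and $b$ only gives this for $c$ inside that subalgebra. Indeed, in a general Jordan algebra two elements can generate an associative subalgebra without operator commuting (Remark~2.5.2 of Hanche-Olsen--St\o rmer), so this implication is one of the substantive claims of the theorem, not a triviality. Without it your cycle of implications breaks: you establish $a)\Leftrightarrow c)$, $c)\Rightarrow b)$, $c)\Rightarrow d)\Rightarrow a)$, but nothing flows out of $b)$. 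In the paper this step is routed as $b)\Rightarrow c)$ via the quadratic criterion applied to positive elements of the associative subalgebra, and its proof requires the full structure theory of JBW-algebras: the decomposition into a JW part and a purely exceptional part, with the exceptional part handled through Shultz's classification $A_{\mathrm{ex}}\cong C(X,M_3(\mathbb{O})_{\sa})$ and a pointwise reduction to Euclidean Jordan algebras. None of that machinery appears in your sketch.

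Second, you correctly identify the crux lemma for $a)\Rightarrow c)$ --- that operator commutation of $a$ and $b$ forces the spectral projections of $a$ in $A^{**}$ to operator commute with $b$ --- but you do not prove it, and your proposed route (order-continuity of $f\mapsto Q_{f(a)}$) does not engage with the real obstruction, which is algebraic: from $a\commu b$ alone one cannot reach $a^2\commu b$ by Jordan identities, and without $a^2\commu b$ one has no access to $W(a)$ and hence to the spectral projections. The paper's resolution is different in kind: by Shirshov--Cohn the JB(W)-algebra generated by $a$ and $b$ is a JC(JW)-algebra, and inside its enveloping C$^*$-algebra one shows that $T_aT_b=T_bT_a$ forces $ab=ba$ --- the key trick being that the hypothesis yields $ba^2b=ab^2a$, so $ab$ is normal, and Fuglede--Putnam--Rosenblum applied to $(ab)a=a(ba)$ gives $a^2b=ba^2$. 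Once $b\commu a,a^2$ is in hand, the weakly closed commutant $W(a)'$ is a subalgebra, giving $b^2\commu a$, and a purely algebraic computation with the linearized Jordan identities produces the associative subalgebra of mutually commuting elements. Your treatment of the positive quadratic criterion has the same defect: ``should allow\dots combined with the spectral argument above'' is not an argument, while the paper again runs it through Fuglede--Putnam (the identity $ab^2a=ba^2b$ makes $ab$ normal) on the JW side and through the exceptional structure theory on the other.
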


We prove this by resorting to the structure theory of JBW-algebras (JBW-algebras are to JB-algebras as von Neumann algebras are to C$^*$-algebras). Along the way we also give new, more algebraic, proofs of the statements in the above theorem for Euclidean Jordan algebras and JC-algebras.

As an application of our results, we show that the binary operation $a\mult b:= Q_{\sqrt{a}}b$ restricted to the unit interval of a JB-algebra satisfies the axioms of a \emph{sequential effect algebra}~\cite{gudder2002sequential}.

\section{Preliminaries}

\begin{definition}
    A \emph{Jordan algebra} $(E,*,1)$ over a field $\mathbb{F}$ is a vector space over $\mathbb{F}$ equipped with a unital commutative (not necessarily associative) bilinear operation $*: E\times E\rightarrow E$
    that satisfies the \Define{Jordan identity}: 
    \[(a*b)*(a*a) = a*(b*(a*a))\]
    We will refer to this operation as the \Define{Jordan product}.
\end{definition}

Though the definition of a Jordan algebra works for any field, as the theory of Jordan algebras is slightly different for fields of characteristic 2, we will assume that the field of the Jordan algebra has any characteristic other than 2.

\begin{example}
    Let $(A,\cdot,1)$ be an associative unital algebra over some field (not of characteristic 2).
    Then the operation $*:A\times A\rightarrow A$ defined by
    \[a*b := \frac12 (a\cdot b + b\cdot a)\]
    makes $(A,*,1)$ a Jordan algebra. We will refer to this operation as the \Define{special Jordan product} of the algebra.
    Any Jordan algebra that is isomorphic to a subset of an associative algebra equipped with this Jordan product is called \Define{special}.
\end{example}

To proceed we need some basic algebraic properties
of Jordan algebras, which are most conveniently expressed
with some additional notation.

\begin{definition}
Let~$E$ be a Jordan algebra.
\begin{enumerate}
\item
We write $a^0:=1$,\quad $a^1:= a$,\quad  $a^2:= a*a$,\quad  
        $a^3:= a*a^2$,\quad 
$a^4:=a*a^3$, \dots .
Note that since~$*$ is not associative
it's not a priori clear whether equations like  $a^4 = a^2 * a^2$ hold.
\item
Given~$a\in E$ we write $T_a: E\rightarrow E$ for the linear operator $T_a(b) := a*b$. We call these operators \Define{product operators}.
\item 
Given two linear operators $S,T\colon E\to E$
we write $[S,T]:= ST-TS$ for the commutator of~$S$ and~$T$.
\end{enumerate}
\end{definition}

Note that because the Jordan product is bilinear, we have $T_{a+\lambda b} = T_a + \lambda T_b$. The following identities can be found in any textbook on Jordan algebras, but as they show some fundamental results regarding operator commutativity we include them here. These are known as the \Define{linearized Jordan equations}:

\begin{lemma}\label{lem:jordan-equations}
Given a Jordan algebra~$E$,
and $a,b,c\in E$, we have
\begin{enumerate}[label=\alph*)]
\item \label{eq:jordan1}
$[T_a,T_{a^2}] = 0$
\item \label{eq:jordan2}
$[T_b,T_{a^2}] = 2[T_{a*b},T_{a}]$ 
\item \label{eq:jordan3}
$[T_a,T_{b*c}] + [T_b,T_{c*a}] + [T_c,T_{a*b}] = 0$;
\end{enumerate}
\end{lemma}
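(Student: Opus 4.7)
The plan is to observe that (a) is a direct rewriting of the Jordan identity, and then obtain (b) and (c) by successive polarization (substitution of $a \mapsto a+\lambda x$ and collecting coefficients of $\lambda$).

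For (a): The Jordan identity $(a*b)*a^2 = a*(b*a^2)$ literally says $T_{a^2}(T_a b) = T_a(T_{a^2} b)$ for every $b \in E$, hence $T_{a^2}T_a = T_a T_{a^2}$ and so $[T_a, T_{a^2}] = 0$.

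For (b): I would substitute $a \mapsto a + \lambda c$ into the identity from (a), where $\lambda \in \mathbb{F}$ and $c \in E$ are arbitrary. By bilinearity and the linearity of $x \mapsto T_x$, we have $T_{a+\lambda c} = T_a + \lambda T_c$ and $(a+\lambda c)^2 = a^2 + 2\lambda(a*c) + \lambda^2 c^2$, so $T_{(a+\lambda c)^2} = T_{a^2} + 2\lambda T_{a*c} + \lambda^2 T_{c^2}$. Expanding the commutator
\[ 0 = [T_{a+\lambda c}, T_{(a+\lambda c)^2}] \]
gives a cubic polynomial in $\lambda$ whose coefficients must all vanish (using that $\mathbb{F}$ has characteristic $\neq 2$, or simply because we can evaluate at sufficiently many $\lambda$). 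The $\lambda^0$ and $\lambda^3$ terms vanish by (a) itself. The coefficient of $\lambda$ is $2[T_a, T_{a*c}] + [T_c, T_{a^2}] = 0$, which rearranges to $[T_c, T_{a^2}] = 2[T_{a*c}, T_a]$; relabelling $c$ as $b$ yields (b).

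For (c): I would polarize (b) in the same way, now substituting $a \mapsto a + \lambda c$ in the identity $[T_b, T_{a^2}] - 2[T_{a*b}, T_a] = 0$ (with $b$ fixed). On the left we pick up the $\lambda$-coefficient $2[T_b, T_{a*c}]$, while on the right we expand $[T_{(a+\lambda c)*b}, T_{a+\lambda c}] = [T_{a*b} + \lambda T_{c*b}, T_a + \lambda T_c]$ and take the $\lambda$-coefficient, obtaining $[T_{c*b}, T_a] + [T_{a*b}, T_c]$. Equating and using $a*c = c*a$ and antisymmetry of the commutator gives $[T_a, T_{b*c}] + [T_b, T_{c*a}] + [T_c, T_{a*b}] = 0$, which is (c).

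There is no real conceptual obstacle here; the whole proof is a standard polarization argument and the main risk is simply bookkeeping errors in tracking coefficients and signs. One minor point worth double-checking is that the polarization argument is valid over the field $\mathbb{F}$; this is fine as soon as $\mathbb{F}$ has characteristic different from $2$, which is assumed throughout the paper.
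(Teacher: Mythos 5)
Your proof is correct and follows essentially the same route as the paper: part (a) is a restatement of the Jordan identity, and (b), (c) are obtained by polarization, which the paper carries out via the substitution $d = a \pm b$ (i.e.\ your $\lambda$-expansion evaluated at $\lambda = \pm 1$, combined with (a) to kill the $\lambda^0$ and $\lambda^3$ terms, then dividing by $2$). The only caveat is that your fallback "evaluate at sufficiently many $\lambda$" is not available over very small fields, so you should rely on the $\pm 1$/characteristic-$\neq 2$ argument you also mention (and watch the factor of $2$ on the right-hand side when polarizing (b), which as written appears to have been dropped, though your final identity is correct).
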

\begin{proof}~
\begin{enumerate}[label=\alph*)]
    \item The first equation, $[T_a,T_{a^2}]=0$, is just a reformulation of the Jordan identity:
        \begin{equation*}
                T_aT_{a^2}b\,\equiv\, a*(b*a^2) \ =\  (a*b)*a^2
            \,\equiv\, T_{a^2}T_a b.
        \end{equation*}
    
    \item Take the equality $[T_d,T_{d^2}] = 0$ and let $d=a\pm b$: $[T_{a\pm b},T_{(a\pm b)^2}]~=~0$. After expanding the terms using linearity we are left with
    $$[T_a, T_{a^2}] \pm [T_b, T_{b^2}] \pm \left([T_b, T_{a^2}] + 2 [T_a, T_{ab}]\right) +\left([T_a, T_{b^2}] + 2 [T_b, T_{ab}]\right) = 0.$$
    Subtracting the equation for $d=a+b$ from the equation for $d=a-b$ and dividing the result by 2 (here we use that the field is not of characteristic 2) we have the desired equation.

    \item Take the previous equation and replace $a$ by $a\pm c$. Using the same trick as before we arrive at the desired equation. \qedhere
\end{enumerate}
\end{proof}

\begin{proposition}
    Given a Jordan algebra~$E$ and $a,b,c \in E$ we have
    \begin{equation}\label{eq:jordan-normalise}
        T_{a*(b*c)} = T_aT_{b*c} + T_b T_{c*a} + T_c T_{a*b} - T_b T_a T_c - 
    T_c T_a T_b
    \end{equation}
\end{proposition}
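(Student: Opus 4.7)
The plan is to verify the identity by applying both sides to an arbitrary $d\in E$. Moving the ``negative'' terms to the left and using commutativity of $*$ (so $c*a=a*c$), the operator identity we need is equivalent to the following scalar identity in $a,b,c,d$:
\begin{equation*}
(a*(b*c))*d \;+\; b*(a*(c*d)) \;+\; c*(a*(b*d))
\;=\; a*((b*c)*d) \;+\; b*((a*c)*d) \;+\; c*((a*b)*d). \qquad (\star)
\end{equation*}
All the real content of the proposition is packed into this auxiliary identity.

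The key observation is that both sides of $(\star)$ equal the common symmetric expression
\begin{equation*}
S(a,b,c,d) \;:=\; (a*b)*(c*d) \;+\; (a*c)*(b*d) \;+\; (a*d)*(b*c),
\end{equation*}
indexed by the three ways of splitting $\{a,b,c,d\}$ into two pairs. For the right-hand side of $(\star)$, apply identity (c) of Lemma~\ref{lem:jordan-equations} to the triple $(a,b,c)$ and evaluate the resulting operator identity at the element $d$; after using commutativity of $*$, what comes out is exactly $\text{RHS}(\star)=S(a,b,c,d)$. For the left-hand side, apply identity (c) instead to the triple $(b,c,d)$, evaluate at $a$, and again use commutativity; the terms rearrange to $\text{LHS}(\star)=S(a,b,c,d)$. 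Equating the two gives $(\star)$, and unwinding the reduction proves the proposition.

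The only real obstacle is spotting which two specializations of (c) to use; once one notices that the four-variable quantity $S(a,b,c,d)$ is totally symmetric and can be produced in two inequivalent ways (peeling off $d$ versus peeling off $a$), the rest is bookkeeping. Nothing beyond equation (c) of Lemma~\ref{lem:jordan-equations} and the commutativity of $*$ is required; in particular, (a) and (b) are used only implicitly, via their role in the derivation of (c).
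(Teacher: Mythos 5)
Your proof is correct and is essentially the paper's own argument: the paper likewise applies identity (c) of Lemma~\ref{lem:jordan-equations} to $d$, notes that the resulting right-hand side $(b*c)*(a*d)+(a*c)*(b*d)+(a*b)*(c*d)$ is symmetric under interchanging $a$ and $d$, and deduces the same two-sided equality you call $(\star)$; your ``two specializations of (c)'' are exactly that symmetry argument made explicit.
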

\begin{proof}
    Apply the operators of \eqref{eq:jordan3} to an element $d$ and bring all the negative terms to the right to get
    $$a((bc)d) + b((ac)d) + c((ab)d) = (bc)(ad) + (ac)(bd) + (ab)(cd).$$
    Observe that the right-hand side is invariant under an interchange of $a$ and $d$ so that the left-hand side must be as well. This leads to the equality
    \begin{align*}
    a((bc)d) + b((ac)d) + c((ab)d) &= d((bc)a) + b((dc)a) + c((db)a) \\
    &= ((bc)a)d + b(a(cd)) + c(a(bd))
    \end{align*}
    where we have used the commutativity of the product to move $d$ to the end in the last equality.
    Translating this back into multiplication operators, using that this equality holds for all $d$, and bringing some terms to the other side then gives the desired equation.
\end{proof}

\subsection{Operator commutativity}

In this section we will collect and prove some results regarding operator commutativity in general Jordan algebras.

\begin{definition}
    Let $E$ be a Jordan algebra. We say $a,b\in E$ \Define{operator commute} when their Jordan product maps $T_a,T_b:E\rightarrow E$ commute, or equivalently when $a*(c*b) = (a*c)*b$ for all $c\in A$. We write $a\commu b$ to denote that $a$ and $b$ operator commute.
\end{definition}

\begin{definition}
    Let $E$ be a Jordan algebra, and let $S\sse E$ be some subset. We write $S'$ for the \Define{commutator} of $S$, defined as $S' := \{a\in E~;~ \forall s\in S: a\commu s\}$. 
\end{definition}

\begin{proposition}
	Let $E$ be a Jordan algebra and let $a\in E$. For any $n\in \N$, $T_{a^n}$ can be written as a polynomial in $T_{a^2}$ and $T_a$.
\end{proposition}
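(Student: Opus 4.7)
My plan is to proceed by induction on $n$, with equation \eqref{eq:jordan-normalise} providing the key recursion. The base cases $n \in \{0,1,2\}$ are immediate, since $T_{a^0} = \id$, $T_{a^1} = T_a$, and $T_{a^2}$ are themselves polynomials in $T_a$ and $T_{a^2}$.

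For the inductive step, assume that $T_{a^k}$ is a polynomial in $T_a$ and $T_{a^2}$ for all $k \leq n$. I would specialize \eqref{eq:jordan-normalise} to $b = a$ and $c = a^{n-1}$. By the definition $a^n := a * a^{n-1}$, the left-hand side becomes $T_{a * (a * a^{n-1})} = T_{a * a^n} = T_{a^{n+1}}$. For the right-hand side, commutativity of $*$ gives $b*c = a^n = c*a$ and $a*b = a^2$, so the recursion reads
$$T_{a^{n+1}} = 2T_a T_{a^n} + T_{a^{n-1}} T_{a^2} - T_a^2 T_{a^{n-1}} - T_{a^{n-1}} T_a^2.$$
Every operator on the right is, by the inductive hypothesis, expressible as a polynomial in $T_a$ and $T_{a^2}$, and since such polynomials are closed under sums and products, so is $T_{a^{n+1}}$.

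There is no serious obstacle here: the heavy lifting has already been done in deriving \eqref{eq:jordan-normalise}. The only thing to verify when specializing is that each of $b*c$, $c*a$, $a*b$ becomes a power of $a$ with exponent strictly less than $n+1$, which is immediate from the definition of the $a^k$ and from commutativity. One could alternatively attempt to use Lemma \ref{lem:jordan-equations}\ref{eq:jordan2} directly (setting $b = a^{n-1}$), but this only yields an identity about commutators $[T_{a^n}, T_a]$ rather than an expression for $T_{a^n}$ itself, so the fully ``normalised'' form \eqref{eq:jordan-normalise} is essentially forced on us.
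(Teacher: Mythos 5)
Your proof is correct and is essentially the paper's own argument: induction on $n$ using Eq.~\eqref{eq:jordan-normalise} specialized to $b=a$, $c=a^{n-1}$, yielding the same recursion $T_{a^{n+1}} = 2T_aT_{a^n} + T_{a^{n-1}}T_{a^2} - T_a^2T_{a^{n-1}} - T_{a^{n-1}}T_a^2$. Your write-up is if anything slightly more explicit about the substitution and the base cases.
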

\begin{proof}
	We prove by induction. It is obviously true for $n=1,2$. Suppose it is true for all $k\leq n$. Then by Eq.~\eqref{eq:jordan-normalise} $T_{a^{n+1}} = T_{a*(a*a^{n-1})} = T_a T_{a^n} + T_a T_{a^n} + T_{a^{n-1}} T_{a^2} - T_a^2T_{a^{n-1}} - T_{a^{n-1}} T_a^2$. Expanding each of the $T_{a^n}$ and $T_{a^{n-1}}$ as polynomials of $T_a$ and $T_{a^2}$ finishes the proof.
\end{proof}

\begin{corollary}
	For any $a\in E$ and $n,m\in \N$, $a^n$ and $a^m$ operator commute and $a^n*a^m = a^{n+m}$.
\end{corollary}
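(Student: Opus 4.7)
The plan is to handle the two conclusions of the corollary in order: first establish that $a^n$ and $a^m$ operator commute, and then use that to derive the power identity $a^n * a^m = a^{n+m}$ by induction.

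For the operator commutativity, the preceding proposition expresses each of $T_{a^n}$ and $T_{a^m}$ as a polynomial in the two operators $T_a$ and $T_{a^2}$. The identity $[T_a, T_{a^2}] = 0$ from Lemma~\ref{lem:jordan-equations} then forces any two such polynomials to commute with one another, so $[T_{a^n}, T_{a^m}] = 0$, which is exactly $a^n \commu a^m$.

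For the power identity I would induct on $n$, with $m \in \N$ arbitrary. The cases $n = 0$ and $n = 1$ are immediate from the definitions $a^0 = 1$ and $a^{k+1} = a * a^k$. For the inductive step, assuming $a^n * a^l = a^{n+l}$ for every $l \in \N$, I would rewrite
\[ a^{n+1} * a^m \;=\; T_{a^m}(a * a^n) \;=\; T_{a^m} T_a (a^n), \]
then use the commutation $[T_{a^m}, T_a] = 0$ from the first part to swap the two operators, and finally apply the inductive hypothesis and the recursive definition of the powers of $a$ to conclude $T_a T_{a^m}(a^n) = T_a(a^{n+m}) = a^{n+m+1}$.

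Neither step presents a real obstacle: both conclusions reduce to the single structural fact $[T_a, T_{a^2}] = 0$, already packaged by the preceding proposition. The one thing to watch is the order of proof, since the induction that gives the power identity genuinely relies on the commutation $[T_{a^m}, T_a] = 0$ established in the first half.
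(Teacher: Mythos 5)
Your proof is correct and is exactly the argument the paper intends: the paper states this corollary without proof as an immediate consequence of the preceding proposition, and your two steps (commutativity of polynomials in the commuting operators $T_a$ and $T_{a^2}$, then a routine induction on $n$ using $[T_{a^m},T_a]=0$ together with the recursive definition $a^{k+1}=a*a^k$) fill in precisely the omitted details.
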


\begin{corollary}
    For any $a\in E$, the Jordan algebra $J(a)$ generated by $a$ (consisting of polynomials in $a$) is associative.
\end{corollary}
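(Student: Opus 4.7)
The plan is to first identify $J(a)$ concretely as the linear span of the powers of $a$, and then verify associativity directly on monomials using the previous corollary.

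First I would set $V := \mathrm{span}\{a^n : n\ge 0\} \subseteq E$ and show that $J(a) = V$. The containment $V \subseteq J(a)$ is immediate: each $a^n$ is obtained by iterating the Jordan product starting from $1$ and $a$, and $J(a)$ is closed under scalar multiples and sums. For the reverse, it suffices to check that $V$ is itself a Jordan subalgebra containing $1$ and $a$. Closure under the product is exactly the content of the previous corollary: for any $n, m \in \N$ we have $a^n * a^m = a^{n+m} \in V$, and by bilinearity this extends to arbitrary linear combinations. Hence $J(a) = V$, matching the parenthetical claim in the statement.

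Next I would verify the associative law $(x*y)*z = x*(y*z)$ on $V$. Both sides are trilinear in $(x,y,z)$, so it suffices to check the identity on monomials $x = a^i$, $y = a^j$, $z = a^k$. Applying the previous corollary twice gives
\[
(a^i * a^j) * a^k \;=\; a^{i+j} * a^k \;=\; a^{i+j+k} \;=\; a^i * a^{j+k} \;=\; a^i * (a^j * a^k),
\]
which is the desired associativity.

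There is no real obstacle here: the substantive work was already carried out in the preceding proposition, which showed that every $T_{a^n}$ is a polynomial in the commuting operators $T_a$ and $T_{a^2}$ (commuting by part \ref{eq:jordan1} of Lemma~\ref{lem:jordan-equations}). From that one extracts both operator commutativity of the powers and the identity $a^n * a^m = a^{n+m}$, after which the corollary reduces to the bookkeeping above.
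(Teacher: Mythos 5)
Your proof is correct and matches the paper's intended argument: the paper states this corollary without proof, as an immediate consequence of the preceding corollary ($a^n * a^m = a^{n+m}$ and operator commutativity of powers), which is exactly the reduction to monomials you carry out. Your write-up simply makes the implicit bookkeeping explicit.
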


\begin{corollary}
	For any $a,b\in E$, $b\in J(a)'$ iff $b\commu a,a^2$.
\end{corollary}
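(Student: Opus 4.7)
The forward direction is immediate: since $a, a^2 \in J(a)$, any element operator commuting with all of $J(a)$ in particular operator commutes with $a$ and $a^2$.

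For the backward direction, assume $b \commu a$ and $b \commu a^2$, so that $T_b$ commutes with both $T_a$ and $T_{a^2}$. The key input is the proposition just proved: for every $n \in \N$, the operator $T_{a^n}$ can be written as a polynomial in $T_a$ and $T_{a^2}$. Since the set of linear operators commuting with a fixed operator is closed under sums and products, $T_b$ commutes with any polynomial in $T_a$ and $T_{a^2}$, and therefore with $T_{a^n}$ for all $n \in \N$. Hence $b \commu a^n$ for all $n$.

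Finally, every element $c \in J(a)$ is a polynomial in $a$, i.e.\ a linear combination $c = \sum_{n=0}^N \lambda_n a^n$ (with $a^0 := 1$, and noting $T_1$ is the identity which commutes with everything). By bilinearity of the Jordan product, $T_c = \sum_n \lambda_n T_{a^n}$, and so $[T_b, T_c] = \sum_n \lambda_n [T_b, T_{a^n}] = 0$. Thus $b \commu c$ for every $c \in J(a)$, which gives $b \in J(a)'$.

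There is no real obstacle here — the statement is essentially a corollary of the preceding proposition, with the only small bookkeeping being to pass from commutation with all powers $a^n$ to commutation with all polynomials in $a$, which is immediate by linearity of $a \mapsto T_a$.
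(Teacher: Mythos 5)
Your proof is correct and follows exactly the route the paper intends: the corollary is stated without proof precisely because it follows from the preceding proposition that $T_{a^n}$ is a polynomial in $T_a$ and $T_{a^2}$, plus linearity of $c \mapsto T_c$. Nothing is missing.
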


Let us remark that for general Jordan algebras, it could be that $a$ and $b$ operator commute, while $a^2$ does not operator commute with $b$. It is also possible that while $a$ and $b$ generate an associative subalgebra, they still do not operator commute. See for instance \cite[Remark 2.5.2]{hanche1984jordan} for explicit examples. 

With some more restrictions on $a$ and $b$ these two properties are however more related. We call an element $p\in E$ \Define{idempotent} when $p^2 = p*p = p$.

\begin{proposition}
	Let $E$ be a Jordan algebra with $a,p\in E$ where $p$ is idempotent. Then $a$ and $p$ operator commute if and only if $a$ and $p$ generate an associative subalgebra.
\end{proposition}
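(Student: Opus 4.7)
The plan is to use the Peirce decomposition of $E$ relative to the idempotent $p$, which is the standard tool for handling a single idempotent in a general Jordan algebra. First I would derive the cubic polynomial identity $T_p(2T_p - \id)(T_p - \id) = 0$: taking Lemma \ref{lem:jordan-equations}(b) with $a := p$ and $b$ arbitrary (so that $T_{p^2} = T_p$ kills one side) yields $[T_b, T_p] = 2[T_{p*b}, T_p]$, and evaluating this operator equation at the element $p$ itself converts outer $T_p$'s into extra factors of $T_p$, producing $2T_p^3 = 3T_p^2 - T_p$. Since the characteristic is not $2$, the roots $0, \tfrac12, 1$ are distinct, so $T_p$ is diagonalisable and we obtain $E = E_0 \oplus E_{1/2} \oplus E_1$ with $E_\lambda := \ker(T_p - \lambda \id)$. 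Using the same commutator identity with $b \in E_\lambda$ one checks that $[T_{x_\lambda}, T_p] = 0$ for $\lambda \in \{0, 1\}$, and then applying these commutations across eigenspaces forces $E_0 * E_1 \subseteq E_0 \cap E_1 = \{0\}$.

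The starting observation for both directions of the proposition is the same: evaluating $T_a T_p = T_p T_a$ at $p$ (for the forward direction) or invoking the associativity identity $(p*p)*a = p*(p*a)$ inside $A := J(a,p)$ (for the backward direction) yields $T_p^2 a = T_p a$. Decomposing $a = a_0 + a_{1/2} + a_1$ according to the Peirce eigenspaces, this equation is equivalent to $a_{1/2} = 0$, i.e.\ $a = a_0 + a_1$ with $a_0 = (1-p)*a \in E_0$ and $a_1 = p*a \in E_1$.

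For the forward direction, both $a_0$ and $a_1$ then lie in $A$. Since $E_0 \oplus E_1$ is a subalgebra on which $E_0 * E_1 = 0$, $A$ sits inside this direct sum and splits as $A = A_0 \oplus A_1$, where $A_i$ is the Jordan subalgebra of $E_i$ generated by $a_i$ with unit $1-p$ or $p$. Each $A_i$ is associative by the earlier corollary that a Jordan algebra generated by a single element is associative, and since $A_0 * A_1 = 0$, so is $A$. For the backward direction, from $a \in E_0 \oplus E_1$ and the already-established fact $[T_{x_\lambda}, T_p] = 0$ for $\lambda \in \{0, 1\}$, the operator $T_a = T_{a_0} + T_{a_1}$ commutes with $T_p$, giving $a \commu p$.

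The main obstacle is the Peirce setup itself: deriving the cubic identity for $T_p$ and the orthogonality $E_0 * E_1 = 0$ from the linearized Jordan equations takes some careful bookkeeping, although neither step is deep. Once these are in hand, the equivalence collapses to the single observation that associativity of $A$ forces $T_p^2 a = T_p a$, which via the Peirce decomposition is precisely the condition for $a$ and $p$ to operator commute.
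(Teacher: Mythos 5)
Your proof is correct. The paper gives no argument of its own here---it simply cites \cite[Lemma 2.5.5]{hanche1984jordan}---and your Peirce-decomposition argument is essentially the standard textbook proof being cited: the cubic identity $2T_p^3-3T_p^2+T_p=0$ obtained from the linearized Jordan equation, the resulting eigenspace splitting $E=E_0\oplus E_{1/2}\oplus E_1$, the relations $E_0*E_1=\{0\}$ and $[T_x,T_p]=0$ for $x\in E_0\cup E_1$, and the observation that either hypothesis forces $a_{1/2}=0$ all check out, with associativity in the forward direction reducing correctly to the single-generator corollary applied inside each of $E_0$ and $E_1$.
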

\begin{proof}
	See any textbook on Jordan algebras, e.g.~\cite[Lemma 2.5.5.]{hanche1984jordan}.
\end{proof}

A related result to this is the following:

\begin{proposition}\label{prop:idempotent-commutator}
	Let $E$ be a Jordan algebra, and let $p\in E$ be idempotent. Then $\{p\}'$ is a subalgebra of $E$.
\end{proposition}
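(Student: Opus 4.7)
The plan is to build a Peirce-type decomposition of $E$ with respect to $p$, identify $\{p\}'$ as the sum of the $0$- and $1$-eigenspaces of $T_p$, and verify that this sum is closed under the Jordan product. First I would specialize equation \eqref{eq:jordan-normalise} to $a = b = c = p$: using $p * p = p$, the left-hand side collapses to $T_p$ while the right-hand side reduces to $3T_p^2 - 2T_p^3$, yielding the cubic operator identity $2T_p^3 - 3T_p^2 + T_p = 0$. This factors as $2T_p(T_p - \tfrac{1}{2})(T_p - 1)$, so $T_p$ is diagonalizable with spectrum in $\{0, \tfrac{1}{2}, 1\}$ and we obtain $E = E_1 \oplus E_{1/2} \oplus E_0$ with $E_\lambda := \ker(T_p - \lambda)$. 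The spectral projections are polynomials in $T_p$; in particular $P_{1/2} = 4T_p - 4T_p^2$, which satisfies $P_{1/2}(1) = 4p - 4p^2 = 0$. If $x \commu p$ then $T_x$ commutes with $P_{1/2}$, and applying $T_x P_{1/2} = P_{1/2} T_x$ to the unit yields $P_{1/2}(x) = T_x P_{1/2}(1) = 0$, so $\{p\}' \subseteq E_1 \oplus E_0$.

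The main technical step, and what I expect to be the bulk of the proof, is establishing the Peirce multiplication rules $E_1 * E_0 = 0$, $E_i * E_i \subseteq E_i$ for $i \in \{0, 1\}$, and $E_i * E_{1/2} \subseteq E_{1/2}$. These all follow from the operator identity
$$T_{T_p^2 c} \;=\; 2T_p T_{T_p c} + T_c T_p - T_p^2 T_c - T_c T_p^2,$$
obtained from \eqref{eq:jordan-normalise} with $a = b = p$. Restricting to $c \in E_1$ this becomes $T_c T_p(1 - T_p) = (1 - T_p)^2 T_c$, and for $c \in E_0$ it becomes $T_c T_p(1 - T_p) = T_p^2 T_c$. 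Applying each identity to $y$ in the various eigenspaces and using that $T_p$ is diagonalizable with three distinct eigenvalues pins down the Peirce component of $c * y$. For instance, $c \in E_1$, $y \in E_0$ gives $(1 - T_p)^2(c * y) = 0$ and hence $c * y \in E_1$, while the symmetric case $c \in E_0$, $y \in E_1$ gives $c * y \in E_0$; combining the two and using commutativity of $*$ forces $E_1 * E_0 \subseteq E_1 \cap E_0 = 0$. The other Peirce rules are obtained in the same way.

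With these rules in hand, both the reverse inclusion $E_1 \oplus E_0 \subseteq \{p\}'$ and closure of $\{p\}'$ under $*$ are routine: for any $x = x_1 + x_0 \in E_1 \oplus E_0$ and any $y$, decomposing $y$ into its Peirce components and expanding $x * (p * y)$ and $p * (x * y)$ using the rules yields the same expression, so $\{p\}' = E_1 \oplus E_0$; and for $a, b \in \{p\}'$ with $a = a_1 + a_0$, $b = b_1 + b_0$, the Peirce rules give $a * b = a_1 * b_1 + a_0 * b_0 \in E_1 \oplus E_0 = \{p\}'$. Hence $\{p\}'$ is a subalgebra of $E$.
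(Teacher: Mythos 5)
The paper offers no proof of this proposition---it defers entirely to the cited lemma of Jacobson---so the only thing to compare against is the classical argument behind that citation, which is precisely the Peirce-decomposition proof you give. Your argument checks out: the specialization $a=b=c=p$ of \eqref{eq:jordan-normalise} does yield $2T_p^3-3T_p^2+T_p=0$, whose roots $0,\tfrac12,1$ are distinct in characteristic $\neq 2$, so $E=E_1\oplus E_{1/2}\oplus E_0$ with spectral projections that are polynomials in $T_p$; the projector $P_{1/2}=4T_p-4T_p^2$ kills the unit, and the observation $P_{1/2}(x)=T_xP_{1/2}(1)=0$ for $x\commu p$ gives $\{p\}'\sse E_1\oplus E_0$ cleanly. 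The Peirce multiplication rules then yield both the reverse inclusion and closure under the product exactly as you describe.

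One step needs care. Deriving $E_1*E_{1/2}\sse E_{1/2}$ from the $c\in E_1$ specialization gives $(1-T_p)^2(c*y)=\tfrac14(c*y)$; comparing Peirce components of $c*y=z_0+z_{1/2}+z_1$ yields $z_1=0$ but only $3z_0=0$, which forces $z_0=0$ only when the characteristic is not $3$ --- and the paper assumes only characteristic $\neq 2$. The same issue arises for $E_0*E_{1/2}$. The repair stays entirely inside your framework: specialize the same operator identity to $c\in E_{1/2}$, obtaining $\tfrac14 T_c=(T_p-T_p^2)T_c+T_c(T_p-T_p^2)$, and apply it to $y\in E_1$ or $y\in E_0$, where $(T_p-T_p^2)y=0$; then $\tfrac14(c*y)=(T_p-T_p^2)(c*y)$, and comparing components annihilates the $E_0$ and $E_1$ parts of $c*y$ in every characteristic $\neq 2$. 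With that adjustment (which is moot over $\R$, the only case the paper actually uses), your proof is complete and self-contained.
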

\begin{proof}
	See e.g.~\cite[Lemma 1]{jacobson1989operator}.
\end{proof}

\begin{proposition}\label{prop:simple-operator-commutation}
	Let $E$ be a Jordan algebra with $a,b\in E$ and suppose $a\commu b, b^2$ and $b\commu a, a^2$. Then $a$ and $b$ span an associative subalgebra of mutually operator commuting elements.
\end{proposition}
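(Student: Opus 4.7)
My strategy is to prove the stronger statement that $T_f$ and $T_g$ commute for all $f, g$ in the subalgebra $J$ generated by $a$ and $b$. This gives both desired conclusions at once: mutual operator commutativity in $J$ is immediate from the definition, and associativity follows since $(u*v)*w = T_w T_u(v) = T_u T_w(v) = u*(v*w)$ whenever $T_u$ and $T_w$ commute. Concretely, I expect $J$ to be the linear span of the elements $a^i * b^j$ for $i,j \geq 0$, with multiplication $(a^i * b^j)*(a^k * b^l) = a^{i+k} * b^{j+l}$, and the commutations to be established by induction on $i+j$.

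By the proposition preceding this one, $T_{a^i}$ is a polynomial in $T_a, T_{a^2}$ and $T_{b^j}$ is a polynomial in $T_b, T_{b^2}$, so the hypothesis immediately yields $[T_{a^i}, T_{b^j}] = 0$ for all $i, j$. Using \eqref{eq:jordan2} in the form $[T_b, T_{a^2}] = 2[T_{a*b}, T_a]$ and the hypothesis then gives $[T_{a*b}, T_a] = 0$, and symmetrically $[T_{a*b}, T_b] = 0$. For the more delicate commutation $[T_{a^2}, T_{b^2}] = 0$, I specialize \eqref{eq:jordan-normalise} to $(a,b,c) = (a,b,b)$ to obtain $T_{a*b^2} = T_a(T_{b^2} - 2T_b^2) + 2T_b T_{a*b}$; every factor on the right already commutes with $T_a$, so $[T_a, T_{a*b^2}] = 0$, and then \eqref{eq:jordan2} with $b$ replaced by $b^2$ yields $[T_{a^2}, T_{b^2}] = 2[T_{a*b^2}, T_a] = 0$. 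A couple more applications of the same sort of trick give $[T_{a*b}, T_{a^2}] = [T_{a*b}, T_{b^2}] = 0$, so at this point $T_a, T_b, T_{a^2}, T_{b^2}, T_{a*b}$ form a commuting family.

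To extend to all $T_{a^i * b^j}$, I apply \eqref{eq:jordan-normalise} with $(a,b,c) = (a, a^{i-1}, b^j)$; using $a * (a^{i-1} * b^j) = a^i * b^j$ (which follows from $[T_a, T_{b^j}] = 0$) together with the commutations established so far, this simplifies to the recursion
\[
T_{a^i * b^j} \;=\; T_a\, T_{a^{i-1} * b^j} + T_{a^{i-1}}\, T_{a * b^j} - T_{a^i}\, T_{b^j},
\]
and there is a symmetric recursion decreasing $j$. A joint induction on $i$ and $j$ then propagates the commutation relations, since each $T_{a^i * b^j}$ gets rewritten as a polynomial in operators that have already been shown to commute; evaluating the same recursion on the element $a^k * b^l$ simultaneously yields the multiplication formula $(a^i * b^j)*(a^k * b^l) = a^{i+k} * b^{j+l}$. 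The main obstacle is the bookkeeping in this final induction: in particular, one needs a preliminary sub-induction showing that each $T_{a * b^j}$ and $T_{a^i * b}$ commutes with $T_{a^2}$ and $T_{b^2}$, which I expect to run along the same lines as the $T_{a*b^2}$ calculation above.
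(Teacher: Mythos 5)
Your proposal is correct and follows essentially the same route as the paper's proof: reduce every $T_p$, for $p$ a polynomial in $a$ and $b$, to a polynomial in the five operators $T_a, T_b, T_{a^2}, T_{b^2}, T_{a*b}$ via Eq.~\eqref{eq:jordan-normalise}, and then verify that these five mutually commute using the linearized Jordan equations, with the key step $[T_{a^2},T_{b^2}]=0$ obtained from the very same expansion of $T_{a*b^2}$. (One minor computational slip: in your displayed recursion the final term should read $(T_{a^i}-2T_aT_{a^{i-1}})T_{b^j}$ rather than $-T_{a^i}T_{b^j}$, but since every factor commutes by the inductive hypothesis this does not affect the argument.)
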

\begin{proof}
	By repeatedly applying Eq.~\eqref{eq:jordan-normalise} any $T_p$ where $p$ is a polynomial in $a$ and $b$ can be reduced to a polynomial in $T_a$, $T_{a^2}, T_b, T_{b^2}$ and $T_{a*b}$, it hence remains to show that $a^2\commu b^2$, and that $a*b$ operator commutes with $a, a^2, b$ and $b^2$. By Lemma~\ref{lem:jordan-equations}.b) we already get $a*b\commu a,b$. With the same equation, but now taking $b:=b^2$, we see that $b^2 \commu a^2 \iff a*b^2\commu a$. Applying Eq.~\eqref{eq:jordan-normalise} to $T_{a*b^2}$ we see that it reduces to a polynomial in $T_b, T_a, T_{a*b}$ and $T_{b^2}$ and since $T_a$ commutes with them all, it commutes with $T_{a*b^2}$, and hence $a^2\commu b^2$.

    Taking \ref{lem:jordan-equations}.b) with $a:= a^2, b:= a, c:= b$ we get $[T_{a^2}, T_{a*b}] = - [T_a, T_{b*a^2}] - [T_b, T_{a^3}]$. As $b\commu a,a^2$ we also have $b\commu a^3$, and hence this last term dissapears. Expanding $T_{b*a^2}$ we see that $[T_a, T_{b*a^2}] = 0$ and hence indeed $[T_{a^2}, T_{a*b}] = 0$. Showing that $b^2\commu a*b$ follows entirely analogously.
\end{proof}

\begin{remark}
    As shown by the example in Ref.~\cite[Remark 2.5.2]{hanche1984jordan}, the conditions $a\commu b,b^2$ are necessary for the previous proposition to hold. It is unclear however whether the further assumption that also $b\commu a^2$ is necessary.
    In Ref.~\cite{jacobson1989operator}, to the authors knowledge the first paper dedicated to studying operator commutativity in Jordan algebras, it was shown that if $E$ is a Jordan algebra over a field of characteristic zero, and $B\sse E$ is a finite-dimensional semi-simple subalgebra, then $B'$ is a subalgebra itself. In particular, letting $B=J(a)$ for some $a\in E$, if $b\commu a,a^2$, then $b\in J(a)'$, and hence, since $J(a)'$ is a subalgebra, also $b^2\commu a$. We will see that when $E$ is a JB-algebra, that a similar property holds.
\end{remark}

\subsection{The quadratic product}

The Jordan product is generally not very well-behaved. In many circumstances it turns out to be better to work with the \emph{quadratic product}.

\begin{definition}
	Let $E$ be a Jordan algebra and let $a\in E$ be arbitrary. We define the \Define{quadratic product} of $a$ as $Q_a := 2T_a^2 - 2T_{a^2}$.
\end{definition}

Note that $Q_a 1 = a^2$ and $Q_1 = \id$.
For an associative algebra $A$ equipped with the standard Jordan product we easily calculate $Q_a b = aba$, hence the name quadratic product.

The quadratic product satisfies the following identity, sometimes referred as the \emph{fundamental equality of quadratic Jordan algebras}.
\begin{proposition}
	Let $E$ be a Jordan algebra, and $a,b\in E$ arbitrary. Then:
	\[Q_{Q_a b} = Q_aQ_bQ_a\]
\end{proposition}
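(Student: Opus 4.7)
The plan is to reduce the identity to an associative calculation via Macdonald's theorem. I would apply both sides to an arbitrary $c \in E$; the expressions $Q_{Q_a b}\,c$ and $Q_a Q_b Q_a\,c$ are polynomials in $a,b,c$ that are linear in $c$. Macdonald's theorem says that any such three-variable Jordan identity, linear in at least one of its variables, which holds in every special Jordan algebra automatically holds in every Jordan algebra. So it suffices to verify the identity when $a*b = \tfrac12(ab+ba)$ inside some associative algebra.

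In that special setting, a direct computation from the definition of $Q_a$ gives $Q_x y = xyx$. Hence $Q_{Q_a b}\,c = (aba)\,c\,(aba) = abacaba$, while
\[
Q_a Q_b Q_a\,c \;=\; Q_a Q_b(aca) \;=\; Q_a\!\bigl(b(aca)b\bigr) \;=\; a\bigl(b(aca)b\bigr)a \;=\; abacaba.
\]
The two sides agree, and Macdonald's theorem then transports the identity to arbitrary Jordan algebras.

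The main obstacle is that Macdonald's theorem has not been set up in this excerpt and is a nontrivial piece of free-Jordan-algebra machinery; in a preliminaries section I would simply cite it from a standard reference such as \cite{hanche1984jordan}. A purely computational alternative would be to expand both $Q_{Q_a b}$ and $Q_a Q_b Q_a$ as operator polynomials in $T_a, T_b, T_{a^2}, T_{b^2}, T_{a*b}$ via Eq.~\eqref{eq:jordan-normalise} and then collapse both sides using the linearized Jordan equations of Lemma~\ref{lem:jordan-equations}; this is in principle feasible but unwieldy and would add little over an appeal to Macdonald. A JB-algebra-specific shortcut via spectral theory in the norm-closed subalgebra generated by $a$ is not available here, since the statement is made at the level of arbitrary Jordan algebras.
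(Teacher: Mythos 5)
Your proof is correct and follows exactly the route the paper itself points to: the paper does not prove this identity but explicitly defers to the standard textbook argument via Macdonald's theorem (citing \cite{hanche1984jordan} among others), which is precisely what you carry out, including the correct verification $Q_x y = xyx$ and $abacaba = abacaba$ in the free special case. The only thing worth adding is that the special-case check must hold formally in the free associative algebra on $a,b,c$ (not merely in some particular associative algebra), which your computation does.
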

Note that for special Jordan algebras this simply reduces to the evidently true equation $(aba)c(aba) = a(b(aca)b)a$.
Nevertheless, in the general case it is surprisingly hard to prove. In most textbooks it is proven only as a consequence of \emph{MacDonalds theorem} (which states that any polynomial equality in two variables that holds for special Jordan algebras, holds for all Jordan algebras)~\cite{hanche1984jordan,alfsen2012geometry,mccrimmon2006taste}. Although for the special case of finite-dimensional formally real Jordan algebras it can also be proven using analytic means~\cite{faraut1994analysis}. In Ref.~\cite{wetering2018algebraic} a semi-automated algebraic proof is given.

We will use the fundamental identity without further reference troughout this paper. Note that as an easy consequence we have $Q_{a^2} = Q_{Q_a 1} = Q_aQ_1Q_a = Q_a^2$.

\subsection{JB-algebras}

\begin{definition}\label{def:JB-algebra}
  Let $(A,*,1,\norm{\cdot})$ be a real Banach space (complete normed vector space) that is also a Jordan algebra. The space $A$ is called a \Define{JB-algebra} (Jordan-Banach) if the Jordan product $*$ satisfies for all $a,b\in A$:
  \begin{enumerate}[label=\alph*)]
    \item $\norm{a*b} \leq \norm{a}\norm{b}$.
    \item $\norm{a^2} = \norm{a}^2$.
    \item $\norm{a^2} \leq \norm{a^2 + b^2}$.
  \end{enumerate}
\end{definition}

\begin{remark}
	In \cite{hanche1984jordan}, JB-algebras are allowed to not have a unit. We will only deal with unital JB-algebras.
\end{remark}

\begin{proposition}[{\cite[Proposition 3.1.6]{hanche1984jordan}}]\label{prop:JB-define-as-OUS}
  Let $A$ be a JB-algebra. Then $A$ is an order unit space complete in the order-unit norm, and for all $a\in A$:
  \begin{equation*}
    -1 \leq a \leq 1 \implies 0\leq a^2\leq 1
  \end{equation*}
  Conversely, any complete order unit space with a Jordan product satisfying the above equation is a JB-algebra.
\end{proposition}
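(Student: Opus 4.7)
The plan is to build the order structure on a JB-algebra from its spectral theory on singly-generated subalgebras, and then show the two sides of the characterization match. Throughout, I will use that for any $a\in A$ the norm-closed Jordan subalgebra $J(a,1)$ generated by $a$ is associative (as all powers $a^n$ operator commute, by the corollaries above). This reduces many questions about a single element to a commutative, associative Banach algebra setting.

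For the forward direction, the first step is to prove a spectral theorem for a single element. Using axioms (a)--(c) on $J(a,1)$ and the associativity, I would verify that $J(a,1)$ is a unital commutative real Banach algebra in which $\|a^2\|=\|a\|^2$ and $\|a^2\|\le\|a^2+b^2\|$ for $b\in J(a,1)$; this is enough to run a Gelfand-style representation yielding an isometric isomorphism $J(a,1)\cong C(\sigma(a))$ for some compact $\sigma(a)\subseteq\R$, together with the spectral-radius identity $\|a\| = \sup_{\lambda\in\sigma(a)}|\lambda|$. With this in hand, I define the positive cone $A^+:=\{a\in A : \sigma(a)\subseteq[0,\infty)\} = \{b^2 : b\in A\}^{-}$, the second description showing that a finite sum of squares is again a square in the norm closure, so $A^+$ is a convex cone. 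Properness ($A^+\cap(-A^+)=\{0\}$) follows because in $C(\sigma(a))$ an element with spectrum $\{0\}$ has norm $0$. The unit is an order unit because $-\|a\|\,1\le a\le\|a\|\,1$ holds in $C(\sigma(a))$, and the order-unit norm then coincides with the JB-norm again by the isometry $J(a,1)\cong C(\sigma(a))$. Completeness in the order-unit norm is then the given Banach-space completeness. Finally, the squaring property $-1\le a\le 1\Rightarrow 0\le a^2\le 1$ is immediate from functional calculus on $J(a,1)\cong C(\sigma(a))$, since $-1\le a\le 1$ means $\sigma(a)\subseteq[-1,1]$ and hence $\sigma(a^2)\subseteq[0,1]$.

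For the converse, suppose $A$ is a complete order-unit space equipped with a Jordan product for which $-1\le a\le 1$ implies $0\le a^2\le 1$. The goal is to derive axioms (a)--(c) of Definition~\ref{def:JB-algebra}. Axiom (b), $\|a^2\|=\|a\|^2$, is the heart of the matter: from $-\|a\|\,1\le a\le\|a\|\,1$ and the hypothesis applied to $a/\|a\|$, one gets $0\le a^2\le\|a\|^2\,1$, hence $\|a^2\|\le\|a\|^2$; the reverse inequality $\|a\|^2\le\|a^2\|$ comes by observing that $\|a\|^2\,1 - a^2 = (\|a\|\,1-a)*(\|a\|\,1+a) + \tfrac12[\,\ldots\,]$-type manipulations show $a^2\le\|a^2\|\,1$ is incompatible with $\|a^2\|<\|a\|^2$. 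Axiom (c), $\|a^2\|\le\|a^2+b^2\|$, is the statement that $b^2\ge 0$, which is built into the hypothesis (applied to $b/\|b\|$); combined with positivity of $a^2$ and monotonicity of the order-unit norm on positives, this is clean. Axiom (a) is extracted from (b) by the polarisation identity $4\,a*b=(a+b)^2-(a-b)^2$ and the parallelogram-style bound $\|(a+b)^2\|+\|(a-b)^2\|\le 2(\|a\|^2+\|b\|^2)$, itself a consequence of (b) and (c).

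The main obstacle is the spectral theorem $J(a,1)\cong C(\sigma(a))$ for a single element of a JB-algebra: this is where most of the work hides in Hanche-Olsen--St\o rmer, and it is what lets one translate the abstract norm axioms into a usable order structure. The converse direction's delicate point is squeezing the full equality $\|a^2\|=\|a\|^2$ out of only the \emph{one-sided} squaring implication $-1\le a\le 1\Rightarrow 0\le a^2\le 1$; one has to leverage the interplay between the Jordan identity and the order-unit norm to rule out strict inequality $\|a^2\|<\|a\|^2$, which is the subtle step that motivates treating this proposition as a cited result rather than reproving it in full here.
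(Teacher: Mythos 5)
First, note that the paper does not prove this proposition at all: it is imported wholesale from Hanche-Olsen and St\o{}rmer, so your sketch is being measured against the standard textbook argument rather than anything in the text. Your overall architecture (Gelfand representation of the singly generated subalgebra for the forward direction, polarization and the order-unit norm for the converse) is the right one, but two of your steps are genuinely broken. In the converse direction, the ``parallelogram-style bound'' $\norm{(a+b)^2}+\norm{(a-b)^2}\le 2(\norm{a}^2+\norm{b}^2)$ is false: in the associative JB-algebra $\R^2$ with the supremum norm, take $a=(1,1)$ and $b=(1,-1)$; the left side is $8$ and the right side is $4$. Axiom (a) should instead be extracted from the order, not from norm estimates: for $\norm{a},\norm{b}\le 1$ the hypothesis applied to $(a\pm b)/2$ gives $0\le (a\pm b)^2\le 4$, and then $4\,a*b=(a+b)^2-(a-b)^2$ is squeezed between $-4$ and $4$. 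Second, your route to $\norm{a}^2\le\norm{a^2}$ via the factorization $\norm{a}^2 1-a^2=(\norm{a}1-a)*(\norm{a}1+a)$ cannot work even in principle: the Jordan product of two positive elements need not be positive (the paper itself points this out immediately after Proposition~\ref{prop:JB-define-as-OUS}), so exhibiting $\norm{a}^2 1-a^2$ as such a product yields nothing, and no manipulation of that identity rules out $\norm{a^2}<\norm{a}^2$. The standard argument is different in kind: for any state $\rho$ of the order-unit space, $(x,y)\mapsto\rho(x*y)$ is a positive semidefinite symmetric bilinear form (because squares are positive), so Cauchy--Schwarz with $y=1$ gives $\rho(a)^2\le\rho(a^2)\le\norm{a^2}$, and taking the supremum over states recovers $\norm{a}^2\le\norm{a^2}$.

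In the forward direction the main gap hides in the phrase ``the second description showing that a finite sum of squares is again a square in the norm closure.'' That the set of squares is closed under addition is precisely the hard content of this half of the proposition and is where axiom (c) earns its keep: one sets $c=a^2+b^2$, decomposes $c=c_+-c_-$ inside $C(c)$, and uses positivity of the quadratic maps together with $\norm{x^2}\le\norm{x^2+y^2}$ to force $c_-=0$. Nothing in the Gelfand representation of a \emph{single} generator gives you this, since $a^2+b^2$ need not lie in any singly generated subalgebra you have already analysed. Relatedly, identifying the order-unit norm of $A$ with the $C(\sigma(a))$-norm requires knowing that positivity of $\lambda 1-a$ in $A$ (being a square of some element of $A$) forces $\sigma(a)\subseteq(-\infty,\lambda]$, which again rests on the cone of squares being proper and additively closed. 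Your instinct to flag $\norm{a^2}=\norm{a}^2$ as the delicate point is correct, but as written both halves of the sketch lean on steps that are either false or silently assume the crux.
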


As a JB-algebra is an order unit space, it comes with a partial order $\leq$. The positive elements $a\geq 0$ in a JB-algebra precisely correspond with the squares: $a\geq 0 \iff \exists b: a=b^2$. Note that the Jordan product maps $T_a$ are not positive. I.e.\ if $a,b\geq 0$, then it is not necessarily the case that $a*b\geq 0$. The quadratic map is better behaved:

\begin{proposition}[{\cite[Proposition 3.3.6]{hanche1984jordan}}]
    Let $A$ be a JB-algebra, and let $a,b \in A$ be arbitrary. If $b\geq 0$ then $Q_a b\geq 0$.
\end{proposition}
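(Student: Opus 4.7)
The plan is to reduce this positivity statement to the standard C*-algebra fact ``$x y x \geq 0$ whenever $x^* = x$ and $y\geq 0$,'' using a Shirshov--Cohn-type embedding of the two-generated JB-subalgebra, with a spectral/Peirce decomposition fallback.

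First I would invoke the JB-algebraic version of the Shirshov--Cohn theorem: the norm-closed JB-subalgebra $B\sse A$ generated by $\{a,b,1\}$ is a JC-algebra, so there is an isometric, order-preserving Jordan embedding $\iota:B\hookrightarrow\mathfrak{A}_\sa$ into the self-adjoint part of a unital C*-algebra $\mathfrak{A}$. Under any such embedding, the Jordan quadratic product is transported to the associative expression $\iota(Q_a b)=\iota(a)\,\iota(b)\,\iota(a)$, since $Q_a b=aba$ is an identity in every special Jordan algebra. Because $B$ carries the order-unit norm (Proposition~\ref{prop:JB-define-as-OUS}) and $\iota$ is isometric, positivity in $B$ coincides with positivity of the image in $\mathfrak{A}_\sa$.

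It then suffices to verify the C*-algebra statement: for $x\in\mathfrak{A}_\sa$ and $y\in\mathfrak{A}$ with $y\geq 0$, $xyx\geq 0$. This is immediate from the continuous functional calculus in $\mathfrak{A}$: write $y=z^2$ with $z=\sqrt{y}\in\mathfrak{A}_\sa$; then $xyx=xz^2x=(zx)^*(zx)\geq 0$. Transporting back along $\iota$ yields $Q_a b\geq 0$ in $B$, hence in $A$.

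The main obstacle is justifying the Shirshov--Cohn-type embedding of a two-generated JB-algebra into $\mathfrak{A}_\sa$: the purely algebraic Shirshov--Cohn theorem says every 2-generated Jordan algebra is special, but one additionally has to match the Banach/order-unit norm on $B$ with the norm inherited from $\mathfrak{A}$ to obtain an \emph{isometric} order embedding. This is known but non-trivial. If one wishes to avoid this machinery, an alternative route is to pass to the enveloping JBW-algebra $A^{**}$, use the spectral theorem there to approximate $a$ in norm by elements $a_n=\sum_i \lambda_{i,n}\, e_{i,n}$ with mutually orthogonal idempotents $e_{i,n}$, then use the Peirce decomposition relative to $\{e_{i,n}\}$ to write $Q_{a_n}$ as an explicit sum of operators of the form $\lambda_{i,n}\lambda_{j,n}\, U_{e_{i,n},e_{j,n}}$ whose positivity on $b\geq 0$ can be verified component by component (the diagonal pieces are of the form $\lambda_i^2 Q_{e_i}$ with $Q_{e_i}$ the compression onto the $1$-Peirce subspace of $e_i$, and the off-diagonal pieces arise from squares of linear combinations of the $e_i$). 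Finally one passes to the limit using the norm-continuity of $a\mapsto Q_a b$ and the norm-closedness of the positive cone.
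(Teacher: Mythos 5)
The paper itself gives no proof of this proposition; it is imported verbatim from Hanche-Olsen and St\o rmer as Proposition 3.3.6, so there is no in-paper argument to compare against. Your main argument is a legitimate alternative derivation and is essentially correct: Shirshov--Cohn for JB-algebras gives an isometric unital Jordan embedding $\iota$ of the closed subalgebra $B$ generated by $\{a,b,1\}$ into some $\mathfrak{A}_{\sa}$; positivity in a unital JB-algebra is a purely metric-unital notion ($x\geq 0$ iff $\norm{\,\norm{x}1-x\,}\leq\norm{x}$), so it is unchanged under passing between $A$, $B$ and $\iota(B)$, and $\iota(Q_ab)=\iota(a)\iota(b)\iota(a)=(z\iota(a))^*(z\iota(a))\geq 0$ with $z=\sqrt{\iota(b)}$ closes the argument. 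The one caveat worth recording is logical order: in the cited source this proposition is proved in Chapter 3 by elementary means, long before the JB-algebraic Shirshov--Cohn theorem (Theorem 7.2.5), whose proof rests on representation theory that in turn uses such basic positivity facts. So your route is fine as a derivation within this paper, where Shirshov--Cohn is taken as known, but it would be circular if used to build the theory from scratch; what it buys is brevity at the cost of invoking much heavier machinery.

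Your fallback sketch via the Peirce decomposition, however, has a genuine gap. For $a=\sum_i\lambda_i e_i$ with orthogonal idempotents, $Q_a$ acts on the Peirce component $A_{ij}$ by multiplication by $\lambda_i\lambda_j$, which is \emph{negative} whenever $\lambda_i$ and $\lambda_j$ have opposite signs; the off-diagonal pieces are therefore not individually positive maps, and a component-by-component verification does not go through. One must instead factor $Q_a=Q_sQ_{\lvert a\rvert}$ with $s$ a symmetry (so $Q_s$ is a Jordan automorphism) and then still prove positivity of $Q_{\lvert a\rvert}$ --- which is essentially the original problem again. Keep the Shirshov--Cohn argument and drop the fallback, or replace it by the elementary proof from the source.
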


\begin{example}
	Let $\mathfrak{A}$ be a unital C$^*$-algebra. Then the set of self-adjoint elements $\mathfrak{A}_{\sa}$ forms a JB-algebra with the Jordan product $a*b := \frac12(ab+ba)$.
\end{example}

\begin{definition}
	Let $A$ be a JB-algebra. We say $A$ is a \Define{JC}-algebra when there exists a C$^*$-algebra $\mathfrak{A}$ so that $A$ is isometrically isomorphic to a norm-closed subset of $\mathfrak{A}_{\sa}$.
\end{definition}

Note that the isometry $\phi: A\rightarrow \mathfrak{A}_{\sa}$ mapping a JC-algebra into its C$^*$-algebra is necessarily a Jordan homomorphism~\cite{wright1978isometries}. When studying JC-algebras as a Jordan algebra we can then hence without loss of generality assume it be a Jordan subalgebra of $\mathfrak{A}_{\sa}$.

\begin{definition}
	We call a Jordan algebra $E$ \Define{Euclidean} if it is also a finite-dimensional Hilbert space in such a way that the Jordan product operators are self-adjoint.
\end{definition}

\begin{proposition}
	A Euclidean Jordan algebra (EJA) is a JB-algebra. Conversely, any finite-dimensional JB-algebra is a Euclidean Jordan algebra.
\end{proposition}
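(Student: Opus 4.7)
The plan is to use Proposition~\ref{prop:JB-define-as-OUS} as a bridge in both directions.

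For the forward direction, let $E$ be a Euclidean Jordan algebra: a finite-dimensional real Hilbert space on which each $T_a$ is self-adjoint. By the results of Section 2.1 the operators $\{T_{a^n}\}_{n\geq 0}$ form a commuting family, and being self-adjoint they are simultaneously diagonalisable. Applied to $a$, this yields a spectral decomposition $a = \sum_i \lambda_i p_i$ with real $\lambda_i$ and pairwise orthogonal idempotents $p_i$ summing to $1$. I would then equip $E$ with the spectral norm $\norm{a} := \max_i|\lambda_i|$ and the order coming from the cone of squares. The characterising condition $-1\leq a\leq 1 \Rightarrow 0\leq a^2\leq 1$ of Proposition~\ref{prop:JB-define-as-OUS} is immediate from the spectral form, completeness is automatic in finite dimensions, and so $E$ is a JB-algebra.

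For the converse, let $A$ be a finite-dimensional JB-algebra. To obtain a Euclidean structure I need a positive definite symmetric bilinear form $B$ on $A$ with $B(a*b, c) = B(b, a*c)$. The natural candidate is $B(a, b) := \mathrm{tr}(T_{a*b})$, the trace of the left-multiplication operator on the finite-dimensional vector space $A$. Symmetry follows from commutativity of $*$, and the associativity identity $\mathrm{tr}(T_{(a*b)*c}) = \mathrm{tr}(T_{a*(b*c)})$ is obtained by taking the operator trace of Equation~\eqref{eq:jordan-normalise} with arguments first $(a,b,c)$ and then $(c,a,b)$, subtracting, and observing that all remaining terms cancel in pairs by cyclicity of the trace.

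The main obstacle is strict positivity of $B$. Axiom c) of Definition~\ref{def:JB-algebra} forces $A$ to be formally real, since $\sum_i a_i^2 = 0$ implies $\norm{a_j^2} \leq \norm{\sum_i a_i^2} = 0$ for each $j$. Using the JB functional calculus (which in finite dimensions is elementary spectral theory of the self-map $T_a$), every $a$ then has a spectral decomposition $a = \sum_i \lambda_i p_i$ into pairwise orthogonal non-zero idempotents. For each such idempotent the Peirce decomposition of $A$ with respect to $p_i$ shows that $T_{p_i}$ has eigenvalues only in $\{0, \tfrac12, 1\}$, with eigenvalue $1$ achieved on $p_i$ itself, so $\mathrm{tr}(T_{p_i}) > 0$. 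Since the $p_i$ are pairwise orthogonal we have $T_{a^2} = \sum_i \lambda_i^2 T_{p_i}$, and hence $B(a,a) = \sum_i \lambda_i^2\, \mathrm{tr}(T_{p_i}) > 0$ whenever $a \neq 0$, completing the construction. The equivalence is also classical and can be invoked directly from~\cite{hanche1984jordan}.
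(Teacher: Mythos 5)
Your proof is correct in outline but takes a genuinely different route from the paper's: the paper disposes of both directions by citation, identifying EJAs with finite-dimensional formally real Jordan algebras via \cite[Prop.~VIII.4.2]{faraut1994analysis} and then quoting \cite[Cor.~3.1.7]{hanche1984jordan} for the forward direction and \cite[Cor.~3.3.8]{hanche1984jordan} (every JB-algebra is formally real) for the converse. You instead essentially reprove those cited results: the trace form $B(a,b)=\mathrm{tr}(T_{a*b})$, with associativity extracted from Eq.~\eqref{eq:jordan-normalise} by cyclicity of the trace, is exactly the classical construction, and your positivity argument via the Peirce eigenvalues $\{0,\tfrac12,1\}$ of $T_{p_i}$ is sound. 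What your version buys is self-containedness; what it costs is that a few standard but nontrivial facts are used silently. Two worth flagging: (i) in the forward direction, passing from simultaneous diagonalisability of the $T_{a^n}$ to a decomposition $a=\sum_i\lambda_i p_i$ with pairwise \emph{orthogonal idempotents} summing to $1$, and then to the claim that the cone of squares is a proper Archimedean cone whose order-unit norm is your spectral norm, needs formal reality of $E$ (which does follow from $\langle\sum_i a_i^2,1\rangle=\sum_i\langle a_i,a_i\rangle$ and self-adjointness of the $T_{a_i}$, but you should say so) plus some bookkeeping you omit; (ii) in the converse, the inequality $\norm{a_j^2}\leq\norm{\sum_i a_i^2}$ for more than two summands is not a direct instance of axiom c) of Definition~\ref{def:JB-algebra} --- chaining it requires knowing that a sum of two squares is again a square, which in a JB-algebra is itself a consequence of the functional calculus and order theory. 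Neither point is fatal, but as written the converse quietly imports machinery comparable to what the paper simply cites.
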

\begin{proof}
	EJAs are precisely formally real finite-dimensional Jordan algebras~\cite[Proposition VIII.4.2]{faraut1994analysis}, and as shown in Ref.~\cite[Corollary 3.1.7]{hanche1984jordan}, any finite-dimensional formally real Jordan algebra is a JB-algebra. Conversely, by \cite[Corollary 3.3.8]{hanche1984jordan}, any JB-algebra is formally real, and hence any finite-dimensional JB-algebra is an EJA.
\end{proof}

Let $a\in A$ be an element of a JB-algebra. The Jordan algebra spanned by $a$ is associative, and as the Jordan product is continuous in the norm we can take the closure of the algebra, denoted as $C(a)$, and this algebra is still associative and furthermore consists of mutually operator commuting elements. An associative JB-algebra is easily seen to be equivalent to the self-adjoint part of a commutative C$^*$-algebra, and hence by the Gel'fand representation $C(a)$ is isomorphic to the set of continuous functions from some compact Hausdorff space to the real numbers. As a result we get a functional calculus for $a$, which in particular allows us to define a square root $\sqrt{a}$ that operator commutes with $a$. See \cite[Section 3.2]{hanche1984jordan} for more details.

\subsection{JBW-algebras}

Let $A$ be a JB-algebra. We call a subset $S\sse A$ \Define{directed} when for any two elements $s_1,s_2$ we can find a third element $s\in S$ such that $s_1\leq s$ and $s_2\leq s$. The subset is \Define{bounded} when there is some $a\in A$ such that for all $s\in S$, $s\leq a$. We call $A$ \Define{bounded directed-complete} when any non-empty bounded directed set has a supremum. We denote the supremum of a bounded directed set $S$ by $\bigvee S$.

\begin{definition}\label{def:normality}
    Let $f:A\rightarrow B$ be a positive map between JB-algebras. We call $f$ \Define{normal} when $f(\bigvee S) = \bigvee f(S)$ for any bounded directed set $S$ that has a supremum in $A$. If $B=\R$ then we call such a map a \Define{normal state}. We say the normal states are \Define{separating} when $f(a)=f(b)$ for all normal states $f$ implies that $a=b$.
\end{definition}

\begin{definition}\label{def:JBW-algebra}
  A JB-algebra $A$ is a \Define{JBW-algebra} when it is bounded directed-complete and has a separating set of normal states.
\end{definition}

\begin{example}
	Let $\mathfrak{A}$ be a von Neumann algebra, \ie a $C^*$-algebra that is bounded directed-complete and has a separating set of normal states~\cite{kadison1956operator}. Then its set of self-adjoint elements $\mathfrak{A}_{\sa}$ is a JBW-algebra with the standard Jordan product.
\end{example}

\begin{definition}
	A JBW-algebra $A$ will be called a \Define{JW-algebra} when it is isomorphic to an ultraweakly closed subset of the self-adjoint elements of a von Neumann algebra.
\end{definition}

Just as every C$^*$-algebra $\mathfrak{A}$ embeds into its double dual $\mathfrak{A}^{**}$ which is a von Neumann algebra, so does every JB-algebra $A$ embed into its double dual $A^{**}$ which is a JBW-algebra~\cite[Theorem 4.4.3]{hanche1984jordan}.

\begin{definition}
    Let $A$ be a JBW-algebra, denote by $V$ the vector space spanned by its normal states. The \Define{weak} topology of $A$ is the $\sigma(A,V)$ topology, \ie it is the weakest topology so that every map in $V$ is continuous. Concretely, a net $a_\alpha$ converges weakly to $a$ if $\omega(a_\alpha)$ converges to $a$ for every normal state $\omega$.
\end{definition}


We collect below a few well-known results regarding the weak and norm topology that we will use throughout the paper without further reference.

\begin{proposition}
	Let $A$ be a JBW-algebra and let $a\in A$ be an arbitrary element.
	\begin{enumerate}[label=\alph*)]
		\item Norm convergence implies weak convergence~\cite[Remark 4.1.3]{hanche1984jordan}.
		\item Let $S\sse A$ be a bounded directed subset. Then the net $(a_s)_{s\in S}$ converges weakly to $\bigwedge S$~\cite[Remark 4.1.3]{hanche1984jordan}.
		\item The operators $T_a$ and $Q_a$ are weakly continuous~\cite[Corollary 4.1.6]{hanche1984jordan}.
	\end{enumerate}
\end{proposition}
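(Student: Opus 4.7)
The plan is to prove each of the three items by unpacking the definition of the weak topology on $A$ as the $\sigma(A,V)$-topology generated by the normal states.

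For (a), every normal state $\omega$ is a positive unital linear functional on $A$ and hence automatically norm-continuous with $\norm{\omega} = \omega(1) = 1$. Therefore $\omega(a_n) \to \omega(a)$ whenever $a_n \to a$ in norm, and by linearity the same convergence holds for every functional in $V$; this is precisely weak convergence.

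For (b), let $S \sse A$ be a bounded directed subset and let $a := \bigvee S$, which exists by bounded directed-completeness of the JBW-algebra. For any normal state $\omega$, the defining property of normality gives $\omega(a) = \sup_{s \in S} \omega(s)$, so the monotone bounded net of real numbers $(\omega(a_s))_{s \in S}$ converges to $\omega(a)$. Extending by linearity to all of $V$ yields weak convergence of $(a_s)_{s \in S}$ to $a$.

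For (c), the approach is to show that $\omega \circ T_a$ and $\omega \circ Q_a$ are normal functionals for every normal $\omega$, which is the standard characterization of weak continuity of a bounded linear operator. The key case is $Q_a$ with $a \geq 0$, where $Q_a$ is positive; one verifies that $Q_a$ preserves suprema of bounded directed nets by approximating $a$ in norm by elements of the generated associative subalgebra $C(a)$, within which (after Gelfand representation) $Q_a$ acts as pointwise multiplication by $a^2$ and is manifestly order-continuous. The general case of $Q_a$ follows by the polarization identity $Q_{a,b} := Q_{a+b} - Q_a - Q_b$ together with a functional-calculus decomposition $a = a_+ - a_-$, and the case of $T_a$ reduces to that of $Q$ via $T_a = \tfrac{1}{2}Q_{a,1}$.

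The main obstacle is (c): unlike (a) and (b), weak continuity cannot be read off directly from the behaviour of normal states on the arguments, and one needs to bridge between order-theoretic normality of $Q_a$ and operator-level weak continuity. This bridge ultimately relies on the fact that a positive linear map between JBW-algebras is weakly continuous if and only if it is order-normal, which can be established either via the spectral calculus in $C(a)$ or extracted from the predual structure of the JBW-algebra; once this equivalence is in hand, (c) drops out of the positivity and order-continuity of $Q_a$ on positive elements.
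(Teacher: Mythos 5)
The paper does not actually prove this proposition; it is stated as a collection of known facts with citations to Hanche-Olsen and St{\o}rmer (Remark 4.1.3 and Corollary 4.1.6), so your attempt is necessarily a different route. Your arguments for (a) and (b) are correct and complete: positivity of a normal state on an order unit space gives boundedness (indeed $\abs{\omega(a)}\leq \omega(1)\norm{a}$), so norm convergence is tested by every element of $V$; and for (b) the net $(a_s)_{s\in S}$ is monotone by construction, so normality of $\omega$ gives $\omega(a_s)\to\sup_s\omega(s)=\omega(\bigvee S)$, which is exactly weak convergence. (Note the statement's ``$\bigwedge S$'' is evidently a typo for $\bigvee S$; you silently and correctly fix it.)

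Part (c) is where your proposal has a genuine gap, in two places. First, the mechanism you offer for order-continuity of $Q_a$ --- ``approximating $a$ in norm by elements of $C(a)$, within which, after Gelfand representation, $Q_a$ acts as pointwise multiplication by $a^2$'' --- does not do the job: $a$ already lies in $C(a)$, and the Gelfand picture of $C(a)$ only describes the restriction of $Q_a$ to $C(a)$, whereas order-continuity must be verified on bounded directed nets in all of $A$, whose members have no reason to lie in $C(a)$. Positivity of $Q_a$ gives $Q_a(\bigvee S)\geq\bigvee Q_a(S)$ for free, but the reverse inequality is the real content and your sketch does not reach it. Second, the ``bridge'' you invoke --- that a positive map between JBW-algebras is weakly continuous iff it is order-normal, equivalently that $\omega\circ Q_a$ being order-continuous forces $\omega\circ Q_a\in V$ --- is itself a nontrivial theorem about the predual of a JBW-algebra (that the span of the normal states is norm-closed and coincides with the order-continuous functionals); you cannot get it from ``spectral calculus in $C(a)$''. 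Since this equivalence is of the same depth as the statement being proved, asserting it leaves the argument circularly dependent on exactly the material the paper is citing from \cite[Section 4.1]{hanche1984jordan}. Also note that $\omega\circ T_a$ need not be positive, so the notion of normality in Definition~\ref{def:normality} does not directly apply to it; you would need to decompose first, as you do via $T_a=\tfrac12(Q_{a+1}-Q_a-Q_1)$, which is fine once the positive case is settled. For part (c) you should either carry out the order-continuity argument for $Q_a$ honestly (as in \cite[Proposition 4.1.5 and Corollary 4.1.6]{hanche1984jordan}) or, like the paper, simply cite it.
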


\begin{definition}
	Let $a\in A$ be an arbitrary element of a JBW-algebra $A$. Let $W(a)$ denote the weak closure of $C(a)$, \ie the JBW-algebra generated by $a$.
\end{definition}

Since $C(a)$ consists of mutually operator commuting elements and the Jordan product is weakly continuous, $W(a)$ also consists of mutually operator commuting elements and in particular, it is associative (see Ref.~\cite[Remark 4.1.10]{hanche1984jordan} for the details).

\begin{proposition}[{\cite[Proposition~4.2.3]{hanche1984jordan}}]
	Let $A$ be a JBW-algebra. The linear span of the idempotents of $A$ lies norm-dense in $A$.
\end{proposition}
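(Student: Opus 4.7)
The plan is to establish a spectral decomposition for each element of a JBW-algebra, in the spirit of the spectral theorem for self-adjoint operators in a von Neumann algebra. Since the target claim is linear, I would fix $a\in A$ and produce, for any $\epsilon>0$, a finite real linear combination of idempotents lying within $\epsilon$ of $a$. The associative JBW-algebra $W(a)$ generated by $a$ is the natural place to build these idempotents: $C(a)\sse W(a)$ carries a continuous functional calculus (via its Gel'fand representation as real-valued continuous functions on $\sigma(a)$), and $W(a)$ inherits bounded directed-completeness from $A$, which is what will let us pass from continuous cutoffs to honest projections.

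For each $\lambda\in\R$ I would construct a spectral projection as follows. Choose continuous functions $f_n:\sigma(a)\to[0,1]$ increasing pointwise to the indicator of $\sigma(a)\cap(-\infty,\lambda)$ (for instance by piecewise-linear cutoffs), and set
\[
e_\lambda \;:=\; \bigvee_n f_n(a),
\]
which exists because $(f_n(a))$ is bounded and increasing and $A$ is bounded directed-complete. To see that $e_\lambda$ is an idempotent, one uses that in the associative algebra $C(a)$ we have $f_n(a)*f_m(a)=(f_n\!\cdot\! f_m)(a)$, combined with weak continuity of each $T_{f_n(a)}$. Concretely, as $m\to\infty$ the function $f_n\!\cdot\! f_m$ increases pointwise to $f_n$ (since $f_n$ vanishes on $[\lambda,\infty)$ and $f_m\uparrow 1$ on $(-\infty,\lambda)$), so $T_{f_n(a)}(f_m(a))\to f_n(a)$ weakly; then letting $n\to\infty$ and using $T_{f_n(a)}(e_\lambda)\to T_{e_\lambda}(e_\lambda)=e_\lambda^2$ weakly yields $e_\lambda^2=e_\lambda$. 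Variants of the same monotone-limit argument give $e_\lambda\le e_\mu$ for $\lambda\le\mu$, the identity $e_\lambda * e_\mu = e_{\min(\lambda,\mu)}$, and $e_\lambda=0$ for $\lambda\le-\|a\|$, $e_\lambda=1$ for $\lambda>\|a\|$.

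Once the spectral family $\{e_\lambda\}$ is in hand, the final step is a Riemann-sum approximation. Given $\epsilon>0$, choose a partition $-\|a\|-1=\lambda_0<\lambda_1<\dots<\lambda_N=\|a\|+1$ with mesh below $\epsilon$, and set $p_i := e_{\lambda_{i+1}}-e_{\lambda_i}$. Each $p_i$ is an idempotent (differences of increasing idempotents in a Jordan algebra are idempotent, via $e_\lambda * e_\mu = e_\lambda$ when $\lambda\le\mu$) and $\sum_i p_i=1$. Sandwiching $\mathrm{id}_{\sigma(a)}$ between the lower and upper step functions $g_-=\sum\lambda_i\mathbf{1}_{[\lambda_i,\lambda_{i+1})}$ and $g_+=\sum\lambda_{i+1}\mathbf{1}_{[\lambda_i,\lambda_{i+1})}$ and approximating each by monotone nets of continuous functions yields $g_-(a)\le a\le g_+(a)$ with $g_+(a)-g_-(a)\le\epsilon\cdot 1$, hence $\|a-\sum_i\lambda_i p_i\|\le\epsilon$. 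The main obstacle throughout is the failure of the Jordan product to be \emph{jointly} weakly continuous: every identity among the $e_\lambda$ (idempotency, monotonicity, orthogonality of differences, summing to $1$) must be extracted from iterated weak-monotone limits of continuous functional calculus expressions, rather than a single weak passage to the limit.
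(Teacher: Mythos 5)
Your argument is correct and is essentially the standard proof of this fact: the paper offers no proof of its own here, merely citing \cite[Proposition 4.2.3]{hanche1984jordan}, and that reference proves it exactly as you do, by building the spectral resolution $\{e_\lambda\}$ inside $W(a)$ as monotone weak limits of continuous functional calculus elements and then Riemann-approximating $a$ by $\sum_i \lambda_i(e_{\lambda_{i+1}}-e_{\lambda_i})$. The only places your sketch leans on unstated facts are minor: the identity $f_n(a)*e_\lambda=f_n(a)$ is cleanest via Dini's theorem (so that $(f_nf_m)(a)\to f_n(a)$ in norm), and the final estimate is most safely phrased through the spectral inequalities $a*e_\lambda\le\lambda e_\lambda$ and $a*(1-e_\lambda)\ge\lambda(1-e_\lambda)$ rather than through monotone approximation of the (non-semicontinuous) step functions $\mathbf{1}_{[\lambda_i,\lambda_{i+1})}$.
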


In the next result we write JB(W)-algebra to note that the result holds for both JB-algebras and JBW-algebras.

\begin{proposition}
	Let $S\sse A$ be a Jordan subalgebra of the JB(W)-algebra $A$. Then $S'$ is a JB(W)-subalgebra.
\end{proposition}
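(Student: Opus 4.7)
The plan is to reduce the problem to the case of a single idempotent, where Proposition~\ref{prop:idempotent-commutator} already applies. To start, $S'$ is a linear subspace, since $a \mapsto T_a$ is linear. Because $T_a d = T_d a$ and $T_d$ is both norm- and weakly continuous, each $\{s\}'$ is norm-closed and, in the JBW case, weakly closed; hence $S' = \bigcap_{s \in S}\{s\}'$ inherits this closedness. For the JB case, I would pass to the JBW-enveloping $A^{**}$: since $A \hookrightarrow A^{**}$ is a Jordan embedding, operator commutation is preserved between the two, so $S'_A = S'_{A^{**}} \cap A$, and it suffices to settle the JBW case.

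Now assume $A$ is a JBW-algebra. The crucial point is that, because $S$ is a Jordan subalgebra, every power $s^n$ with $n \geq 1$ lies in $S$ whenever $s$ does. Consequently, any $a \in S'$ operator commutes with every polynomial in $s$. Since operator commutation $a \commu c$ is preserved under norm and weak limits in $c$ (via the continuity of $T_a$ and of the product in both topologies), we conclude that $a$ operator commutes with every element of $W(s)$, and in particular with every idempotent in $W(s)$.

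Let $P$ be the collection of all idempotents lying in $W(s)$ as $s$ ranges over $S$. The previous paragraph gives $S' \subseteq P'$. Conversely, if $a \in P'$, then for each $s \in S$ the element $a$ operator commutes with every idempotent of $W(s)$, hence by linearity with the linear span of those idempotents, and hence with all of $W(s)$ because this span is norm-dense. In particular $a \commu s$, so $S' = P' = \bigcap_{p \in P}\{p\}'$. Since each $\{p\}'$ is a Jordan subalgebra by Proposition~\ref{prop:idempotent-commutator}, the intersection $S'$ is as well, and combined with the closedness established above this shows that $S'$ is a JB(W)-subalgebra. The main subtlety is in the second paragraph: the hypothesis that $S$ is a Jordan subalgebra rather than an arbitrary set is essential, because operator commutation with $s$ does not in general imply operator commutation with $s^2$, as noted after Proposition~\ref{prop:simple-operator-commutation}.
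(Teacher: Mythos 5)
Your proof is correct and follows essentially the same route as the paper's: reduce commutation with $S$ to commutation with a family of idempotents via the norm-density of the span of idempotents in a JBW-algebra, invoke Proposition~\ref{prop:idempotent-commutator}, and handle the JB case by passing to $A^{**}$ (your only variation is taking the idempotents of the singly generated $W(s)$ rather than of the weak closure of $S$, which works equally well). One small point to tighten: the equality $S'_A = S'_{A^{**}} \cap A$ does not follow merely from $A \hookrightarrow A^{**}$ being a Jordan embedding; the nontrivial inclusion needs that $A$ is weakly dense in $A^{**}$ together with the weak continuity of the product operators, which is exactly the justification the paper gives.
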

\begin{proof}
	Let $a_n \in S$ converge to some $a$ (not necessarily in $S$) in the norm. Suppose $b\in S'$. Then for any $c\in A$, $T_bT_a c = b*(c*a) = \lim_n b*(c*a_n) = \lim_n T_bT_{a_n}c = \lim_n T_{a_n}T_b c = \lim_n (b*c)*a_n = (b*c)*a = T_aT_bc$. So any $b\in S'$ also commutes with everything in the norm closure of $S$. Without loss of generality we may then assume that $S$ is a JB-subalgebra.

	Suppose first that $A$ is a JBW-algebra. Then similarly we may assume that $S$ is weakly closed and hence is a JBW-algebra. But then $b\in S'$ iff $b\commu p$ for every idempotent $p\in S'$, and hence $S' = \bigcap_{p^2=p\in S}\ \{p\}'$. But as each $\{p\}'$ is a Jordan algebra by Proposition~\ref{prop:idempotent-commutator}, $S'$ will also be a Jordan algebra. It is easy to show that any norm or weak convergent net $b_n \in S'$ also operator commutes with all in $S'$, and hence $S'$ is a JBW-subalgebra.

	If $A$ is only a JB-algebra, then we can view $A$ as embedded into $A^{**}$. As $A$ lies weakly dense in $A^{**}$, the commutator $S'$ restricted to $A$ agrees whether taken as commutators in $A$ or $A^{**}$. Hence, if $b,b'\in S'$, then $b*b'\in S'$ as $S'$ is a subalgebra in $A^{**}$. So $S'$ is a Jordan subalgebra. It is easily seen to be norm-closed and hence is a JB-algebra.
\end{proof}


\noindent Using this proposition we can derive a stronger version of Proposition~\ref{prop:simple-operator-commutation} for JBW-algebras.

\begin{proposition}\label{prop:commuting-elements-span-associative-algebra}
	Let $A$ be a JBW-algebra, and $a,b\in A$ arbitrary. If $b\commu a$ and $b\commu a^2$, then $b^2\commu a$ and there is an associative subalgebra $B$ consisting of mutually operator commuting elements such that $W(a),W(b)\sse B$.
\end{proposition}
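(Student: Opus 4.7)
The plan has three stages. First, I would upgrade the hypothesis $b \commu a, a^2$ to the stronger statement $b \in W(a)'$. The corollary asserting that $b \in J(a)'$ iff $b \commu a, a^2$ immediately gives $b \in J(a)'$, so $b$ operator commutes with every polynomial in $a$. To pass from $J(a)$ to its weak closure $W(a)$, I would pick any $c \in W(a)$ together with a net $c_\alpha \in J(a)$ converging weakly to $c$, and exploit weak continuity of the product operators: the two sides of the known identity $T_b T_{c_\alpha} d = T_{c_\alpha} T_b d$ can be rewritten as $T_b T_d c_\alpha$ and $T_{b*d}c_\alpha$, and passing to the weak limit in $\alpha$ (using weak continuity of $T_b T_d$ and $T_{b*d}$) yields $T_b T_c d = T_c T_b d$ for every $d \in A$, \ie $b \commu c$.

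Once $b \in W(a)'$, the preceding proposition (applied to $S = W(a)$, which is itself a JBW-subalgebra) says $W(a)'$ is a JBW-subalgebra of $A$. Hence $b^2 = b*b \in W(a)'$, which gives the desired $b^2 \commu a$, and in fact the whole JBW-subalgebra $W(b) \subseteq W(a)'$, since $W(b)$ is the smallest JBW-subalgebra containing $b$.

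With $a \commu b, b^2$ and $b \commu a, a^2$ now both in hand, Proposition~\ref{prop:simple-operator-commutation} produces an associative Jordan subalgebra $J(a,b)$ consisting of mutually operator commuting elements. I would then set $B$ to be the weak closure of $J(a,b)$; it contains both $W(a)$ and $W(b)$, since $J(a), J(b) \subseteq J(a,b) \subseteq B$ and $B$ is weakly closed. To see $B$ remains associative with mutually operator commuting elements, I would extend the defining identities from $J(a,b)$ to $B$ one variable at a time: the associator $(x,y,z) \mapsto (x*y)*z - x*(y*z)$ and, for each fixed $c \in A$, the operator commutator $(x,y) \mapsto x*(c*y) - (x*c)*y$ are separately weakly continuous in each entry (an immediate consequence of weak continuity of the $T_\cdot$), they vanish on $J(a,b)$, and therefore vanish on $B$. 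The principal obstacle is nothing more than this weak-continuity bookkeeping, which is routine but requires some care as to which variable is being varied at each step; the conceptual content is carried by Proposition~\ref{prop:simple-operator-commutation} together with the fact that commutators of JBW-subalgebras are JBW-subalgebras.
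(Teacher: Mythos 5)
Your proposal is correct and follows essentially the same route as the paper: pass from $b\in J(a)'$ to $b\in W(a)'$ by weak continuity, use that $W(a)'$ is a (JBW-)subalgebra to get $b^2\commu a$, invoke Proposition~\ref{prop:simple-operator-commutation}, and take the weak closure of the resulting associative algebra. The extra bookkeeping you supply (the one-variable-at-a-time weak-continuity arguments) is exactly what the paper leaves implicit.
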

\begin{proof}
	If $b\commu a$ and $b\commu a^2$, then $b\in J(a)'$. By weak continuity of the Jordan product, $b \in W(a)'$. By the previous proposition $W(a)'$ is a subalgebra, so also $b^2 \in W(a)'$. Hence in particular $b^2\commu a$. Then by Proposition~\ref{prop:simple-operator-commutation}, $a$ and $b$ generate an associative Jordan algebra $S$ of mutually operator commuting elements. Let $B$ be the norm and weak closure of $S$, then $B$ has the desired properties.
\end{proof}

\subsection{Structure of JBW-algebras}

The JC-algebras are special Jordan algebras since they come from the associative product of the underlying C$^*$-algebra. The counterpart to that is an exceptional algebra that is not equal to some part of an associative algebra.

\begin{definition}
	Let $A$ be a JB-algebra. We call $A$ \Define{purely exceptional} when any Jordan homomorphism $\phi:A\rightarrow \mathfrak{A}_{\sa}$ onto a C$^*$-algebra $\mathfrak{A}$ is necessarily zero.
\end{definition}

\begin{theorem}[{\cite[Theorem 7.2.7]{hanche1984jordan}}]\label{thm:JBW-decomposition}
	Let $A$ be a JBW-algebra. Then there is a unique decomposition $A=A_{\text{ex}}\oplus A_{\text{sp}}$ where $A_{\text{sp}}$ is a JW-algebra and $A_{\text{ex}}$ is a purely exceptional JBW-algebra.
\end{theorem}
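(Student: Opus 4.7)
The plan is to extract from $A$ the maximal direct summand that is a JW-algebra, and verify its complement is automatically purely exceptional. Both pieces will correspond to central projections of $A$, exploiting the standard JBW-algebra correspondence between weakly closed Jordan ideals and central projections: a central projection $z \in A$ splits $A$ as the Jordan direct sum $Q_z A \oplus Q_{1-z} A$.

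Let $\mathcal{Z}$ denote the set of central projections $z \in A$ with $Q_z A$ a JW-algebra. First I would check $\mathcal{Z}$ is closed under finite suprema: given $z, z' \in \mathcal{Z}$ with embeddings $Q_z A \hookrightarrow N$ and $Q_{z'} A \hookrightarrow N'$ into von Neumann algebras, the summand $Q_{z \vee z'} A$ splits as an internal Jordan direct sum that embeds into $N \oplus N'$. Next, closure under directed suprema: for an upward directed family $\{z_i\} \sse \mathcal{Z}$, the canonical normal Jordan map $Q_{\bigvee z_i} A \to \prod_i Q_{z_i} A$ is injective, and composing with the embeddings $Q_{z_i} A \hookrightarrow N_i$ realizes $Q_{\bigvee z_i} A$ as an ultraweakly closed Jordan subalgebra of a product of von Neumann algebras, hence a JW-algebra. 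Thus $z_* := \bigvee \mathcal{Z}$ lies in $\mathcal{Z}$, and I would set $A_{\text{sp}} := Q_{z_*} A$ and $A_{\text{ex}} := Q_{1-z_*} A$.

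For pure exceptionality of $A_{\text{ex}}$, suppose toward contradiction that $\psi: A_{\text{ex}} \to \mathfrak{B}_{\sa}$ is a nonzero Jordan homomorphism into a C$^*$-algebra. The bidual extension $\psi^{**}: A_{\text{ex}}^{**} \to \mathfrak{B}^{**}_{\sa}$ is a normal Jordan homomorphism into the self-adjoint part of a von Neumann algebra. Using that $A_{\text{ex}}$ is a direct summand of its bidual $A_{\text{ex}}^{**}$ via a canonical central projection, the restriction $\tilde\psi$ of $\psi^{**}$ to $A_{\text{ex}}$ is a nonzero normal Jordan homomorphism into $\mathfrak{B}^{**}_{\sa}$. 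Its kernel is a weakly closed Jordan ideal $Q_p A_{\text{ex}}$ for some central projection $p < 1$, and the quotient $Q_{1-p} A_{\text{ex}}$ embeds normally and injectively, with ultraweakly closed image, into the von Neumann algebra $\mathfrak{B}^{**}$. Thus $Q_{1-p} A_{\text{ex}}$ is a nonzero JW-direct-summand inside $A_{\text{ex}}$; combined with $A_{\text{sp}}$, this yields a JW-direct-summand of $A$ strictly larger than $A_{\text{sp}}$, contradicting maximality of $z_*$.

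Uniqueness then falls out cleanly: any decomposition $A = B_{\text{sp}} \oplus B_{\text{ex}}$ with $B_{\text{sp}}$ a JW-algebra gives a central projection $z' \in \mathcal{Z}$ with $z' \leq z_*$, while pure exceptionality of $B_{\text{ex}}$ prevents the nonzero JW-piece $Q_{z_* - z'} A$ from sitting inside $B_{\text{ex}}$ (the projection $B_{\text{ex}} \to Q_{z_* - z'} A$ followed by any faithful embedding into some $\mathfrak{A}_{\sa}$ would be a nonzero Jordan hom). The main obstacle is the JBW-structural input I am assuming silently: the correspondence between central projections and weakly closed Jordan ideals, the fact that a JBW-algebra is a canonical direct summand of its own bidual, and that normal injective Jordan maps between JBW-algebras have ultraweakly closed range. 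Each is standard in the theory of \cite{hanche1984jordan}, but setting them up rigorously is what underwrites the entire maximal-summand construction and the bidual lifting in the pure-exceptionality step.
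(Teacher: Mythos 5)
The paper does not prove this theorem; it is quoted verbatim from Hanche-Olsen and St{\o}rmer, so your attempt has to be measured against the standard proof there. Your overall architecture --- take the supremum $z_*$ of all central projections $z$ with $Q_zA$ a JW-algebra, using the correspondence between central projections and weakly closed ideals, closure of this family under finite and directed suprema, and the fact that a normal injective Jordan homomorphism between JBW-algebras has weakly closed range --- is a sensible and essentially sound way to produce the candidate summand $A_{\text{sp}}=Q_{z_*}A$ and to get uniqueness. The problem is the step where you verify that $A_{\text{ex}}=Q_{1-z_*}A$ is purely exceptional.

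The gap is your claim that the restriction $\tilde\psi$ of $\psi^{**}$ to the canonical summand $A_{\text{ex}}\cong eA_{\text{ex}}^{**}$ is \emph{nonzero}. Writing $\iota$ for the canonical embedding into the bidual, one has $\psi(a)=\psi^{**}(e*\iota(a))+\psi^{**}((1-e)*\iota(a))$, and nothing prevents the first (normal) part from vanishing identically while $\psi\neq 0$: this happens exactly when $\psi$ is singular. For instance, a character of $L^\infty[0,1]$ coming from a point of the Gelfand spectrum not supported on the normal part is a nonzero Jordan homomorphism into $\R=\C_{\sa}$ with $\psi^{**}(e)=0$, hence $\tilde\psi=0$. (That algebra is of course a JW-algebra, so it does not contradict the theorem, but it shows the logical step is unjustified.) What your argument actually proves is that $A_{\text{ex}}$ admits no nonzero \emph{normal} Jordan homomorphism into a von Neumann algebra; the JW quotient you extract from a general $\psi$ is a direct summand of $A_{\text{ex}}^{**}$ sitting under $1-e$, and need not meet $A_{\text{ex}}$ at all. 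Upgrading ``normally purely exceptional'' to ``purely exceptional'' in the sense used here (no nonzero Jordan homomorphism into any C$^*$-algebra) is precisely the hard content of the theorem, and in the standard proof it is obtained by entirely different means: the type decomposition of JBW-algebras, the concrete identification of the non-JW part as $C(X,M_3(\mathbb{O})_{\sa})$, and a simple-quotient argument (every nonzero simple quotient of $C(X,M_3(\mathbb{O})_{\sa})$ is the exceptional Albert algebra, while quotients of JC-algebras are again JC). The bidual trick cannot replace that input.
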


\begin{example}[{\cite{shultz1979normed}}]
	A compact Hausdorff space $X$ is called \Define{hyperstonean} when it is extremally disconnected and $C(X)$ is separated by normal states (\ie when $C(X)$ is a JBW-algebra). Let $E = M_3(\mathbb{O})_{\sa}$ denote the exceptional Albert algebra. Denote by $C(X,E)$ the set of continuous functions $f:X\rightarrow E$. Then $C(X,E)$ is a purely exceptional JBW-algebra with the Jordan product given pointwise by $(f*g)(x) = f(x)*g(x)$.
\end{example}

The above example is actually the only type of purely exceptional JBW-algebra, as the following result by Shultz shows.

\begin{theorem}[{\cite{shultz1979normed}}]
	Let $A$ be a purely exceptional JBW-algebra. Then there exists a hyperstonean compact Hausdorff space $X$, \ie that is extremally disconnected and where $C(X)$ is separated by normal states, such that $A\cong C(X,M_3(\mathbb{O}))$.
\end{theorem}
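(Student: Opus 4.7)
The plan is to present $A$ as a ``bundle of Albert algebras'' over its centre, using three structural ingredients: the Gelfand representation of the centre, a JBW type decomposition, and Jacobson's Jordan coordinatization theorem applied to a suitable frame of idempotents. Concretely, I would first analyse the centre $Z(A)=\{c\in A \;:\; c\commu a \text{ for all } a\in A\}$, which is a weakly closed associative JB-subalgebra, hence an associative JBW-algebra. Since any associative JBW-algebra is the self-adjoint part of a commutative von Neumann algebra, the Gelfand representation gives $Z(A)\cong C(X)$ for a compact Hausdorff $X$, and the existence of a separating set of normal states forces $X$ to be hyperstonean. This already produces the candidate space $X$ independently of any exceptional structure.

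Second, I would pin down the ``type'' of $A$. The goal is to show that a purely exceptional JBW-algebra is necessarily of type $I_3$. If $A$ had a central summand carrying a frame of $n\geq 4$ mutually orthogonal exchangeable idempotents summing to its identity, Jacobson's coordinatization theorem would represent that summand as $n\times n$ Hermitian matrices over an \emph{associative} coordinate ring (Hurwitz's theorem rules out non-associative alternative composition algebras once $n\geq 4$), making it a JC-algebra, hence special, contradicting pure exceptionality on that summand. This eliminates the types $II$, $III$ and $I_n$ for $n\geq 4$. The type $I_1$ summand coincides with a piece of $Z(A)$ and is trivially special; the type $I_2$ summand is a bundle of spin factors, and spin factors are well-known to embed into their Clifford algebras, hence are JC. Only type $I_3$ can survive.

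Third, I would coordinatize. On the (now global) type $I_3$ algebra, pick a frame $p_1,p_2,p_3$ of exchangeable abelian idempotents with $p_1+p_2+p_3=1$; such a frame exists by the type $I_3$ classification together with the weak density of the linear span of idempotents. Jacobson's theorem then gives $A\cong H_3(\mathcal{C})$, where $\mathcal{C}$ is an alternative composition algebra over $Z(A)=C(X)$, realised concretely through the Peirce decomposition of $A$ relative to the frame. Localising at each $x\in X$ by evaluating the centre yields a type $I_3$ JBW-factor, which by the finite-dimensional classification must be one of $M_3(\R)_{\sa}$, $M_3(\C)_{\sa}$, $M_3(\mathbb{H})_{\sa}$ or $M_3(\mathbb{O})_{\sa}$. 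Pure exceptionality eliminates the first three (each admits a nonzero Jordan map into a $C^*$-algebra), so every fibre is $M_3(\mathbb{O})_{\sa}$, and the global frame trivialises the bundle to produce $A\cong C(X,M_3(\mathbb{O}))$.

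The main obstacle I expect is the coordinatization step in the infinite-dimensional setting: Jacobson's theorem is essentially algebraic and finite-dimensional, so it must be combined with weak density of idempotents, normality of the Peirce projections, and a careful localisation over $X$ in order to recover a trivial bundle rather than one twisted by some cocycle of automorphisms of $\mathbb{O}$. Triviality ultimately rests on the fact that any two frames in a type $I_3$ JBW-algebra are related by an inner Jordan automorphism lying above the centre, so the coordinate algebra $\mathcal{C}$ cannot pick up a non-trivial twist as $x$ varies over the hyperstonean $X$.
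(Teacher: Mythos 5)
The paper does not prove this theorem; it imports it wholesale from Shultz's paper, so there is no internal proof to compare against. Your outline does follow the route actually taken in the literature (Gelfand representation of the centre, type decomposition, coordinatization of the type $I_3$ part, fibrewise identification of the coordinate algebra), so the strategy is the right one. But as a proof it has three genuine gaps. First, your mechanism for excluding types $II$ and $III$ does not work as stated: those summands by definition contain no abelian idempotents at all, so there is no ``frame'' in the type $I$ sense to feed into Jacobson's coordinatization. What is actually used is the halving lemma, which produces $n\geq 4$ mutually orthogonal \emph{exchangeable} (not abelian) idempotents summing to the identity in any type $II$ or $III$ summand, together with the separate theorem that a JBW-algebra admitting such a system is reversible in a faithful representation and hence a JW-algebra. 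Second, pure exceptionality is a global hypothesis, while your fibre argument is pointwise: a priori some fibres could be $M_3(\R)_{\sa}$, $M_3(\C)_{\sa}$ or $M_3(\mathbb{H})_{\sa}$ without producing a special direct summand. You need to show that the set of points where the coordinate algebra is associative is clopen (using extremal disconnectedness of $X$) and therefore cuts out a central special summand, which pure exceptionality then forces to vanish.

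Third, and most seriously, the trivialisation of the resulting ``octonion bundle'' over $X$ is the actual content of Shultz's theorem and cannot be waved through with the remark that frames are conjugate by inner automorphisms. Conjugacy of frames controls the matrix units, not the coordinate ring: the coordinate algebra $\mathcal{C}$ extracted from the Peirce space $A_{12}$ is an alternative algebra over $C(X)$ whose fibres are copies of $\mathbb{O}$, and one must rule out a twist by a continuous $\mathrm{Aut}(\mathbb{O})=G_2$-valued cocycle. The proof that no such twist occurs uses the hyperstonean (in particular zero-dimensional) nature of $X$ to produce a global octonion basis by patching local ones over a clopen partition; this step is where the topology of $X$ genuinely enters, and it is absent from your sketch. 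In short: your roadmap is the correct one, but each of the three legs you identify is itself a substantial theorem, and the obstacle you flag at the end is not a loose end but the core of the argument.
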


Since any JBW-algebra splits up into a direct sum of a JW-algebra and an algebra of the form $C(X,M_3(\mathbb{O})_{\sa})$, many questions regarding JBW-algebras can be settled by studying von Neumann algebras and Euclidean Jordan algebras (of which $M_3(\mathbb{O})_{\sa}$ is an example). Although direct proofs seem preferable, for some results it is not clear how to construct a direct proof, such as for the results of this paper.

\section{Main results}

We are now ready to start proving our main results. We first study operator commutation in JC-algebras. Then we study it in Euclidean Jordan algebras. Then we combine these results with the structure theory of JBW-algebras of the previous section to derive some consequences for general JBW-algebras. Finally, we generalize this to JB-algebras.

Recall that an element $a$ in a C$^*$-algebra is called \Define{normal} when $aa^* = a^*a$. This should not be confused with maps between JBW-algebras preserving suprema that are also called normal. We will not use the latter definition of `normal' in this section. The following theorem regarding normal elements will be our primary calculational tool in this section.

\begin{theorem}[Fuglede-Putnam-Rosenblum]
  Let $m,n,a\in \mathfrak{A}$ be elements of a C$^*$-algebra, with $m$ and $n$ normal and $ma = an$. Then $m^*a = an^*$.
\end{theorem}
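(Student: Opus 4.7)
The plan is to prove this by an entire-function / Liouville argument, exploiting normality only to produce unitary factors that bound the relevant function. The starting observation is purely algebraic: from $ma=an$, a trivial induction gives $m^k a = a n^k$ for every $k \geq 0$, so for every $\lambda \in \C$ the series identity
\[e^{\lambda m} a \;=\; a \, e^{\lambda n}\]
holds in the Banach algebra $\mathfrak{A}$. Equivalently, $e^{-\bar\lambda m} a \, e^{\bar\lambda n} = a$ for all $\lambda$. Note that we have not used normality yet; this works for any commuting pair.

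Next I would introduce the function
\[f(\lambda) \;:=\; e^{\lambda m^*} a \, e^{-\lambda n^*},\]
which is entire in $\lambda$ (the exponential series converges absolutely in norm). The goal is to show $f$ is constant and equal to $a$; then differentiating at $\lambda = 0$ yields $m^* a - a n^* = 0$ as required. Using the identity $e^{-\bar\lambda m} a \, e^{\bar\lambda n} = a$ from the previous step, one can rewrite
\[f(\lambda) \;=\; \bigl(e^{\lambda m^*} e^{-\bar\lambda m}\bigr)\, a \,\bigl(e^{\bar\lambda n} e^{-\lambda n^*}\bigr).\]
This rewriting is where normality enters: because $m$ commutes with $m^*$, the exponentials $e^{\lambda m^*}$ and $e^{-\bar\lambda m}$ combine, giving $e^{\lambda m^* - \bar\lambda m}$, whose exponent is skew-adjoint and hence whose exponential is unitary (of norm $1$). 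The same argument applied to $n$ shows the right-hand factor is also unitary. Therefore $\|f(\lambda)\| \leq \|a\|$ for every $\lambda \in \C$.

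Now I would finish by a standard Banach-space Liouville argument: for any bounded linear functional $\phi$ on $\mathfrak{A}$, the scalar function $\lambda \mapsto \phi(f(\lambda))$ is entire and bounded by $\|\phi\|\,\|a\|$, hence constant by Liouville. Since this holds for all $\phi$, the Hahn--Banach theorem forces $f$ itself to be constant, so $f(\lambda) = f(0) = a$. Differentiating in $\lambda$ at $0$ gives $m^* a = a n^*$.

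The main obstacle in writing this out cleanly is the handling of the two exponential regroupings: one must be careful that $e^{\lambda m^*} e^{-\bar\lambda m} = e^{\lambda m^* - \bar\lambda m}$ requires commutativity of the two exponents, which is exactly the hypothesis that $m$ is normal, and similarly for $n$. Without normality, the factors in $f(\lambda)$ cannot be bounded uniformly in $\lambda$, and the Liouville step collapses. Beyond that point, the argument is essentially bookkeeping: all the work is encapsulated in the algebraic identity $m^k a = a n^k$ and in the unitarity produced by skew-adjoint exponents.
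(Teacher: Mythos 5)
Your argument is correct: it is exactly Rosenblum's classical proof of the Fuglede--Putnam theorem (the algebraic identity $m^k a = a n^k$, the entire function $f(\lambda)=e^{\lambda m^*}a\,e^{-\lambda n^*}$, the regrouping into unitaries via normality and skew-adjoint exponents, and the vector-valued Liouville step). The paper does not prove this statement at all --- it invokes it as a known black-box result --- so there is no in-paper proof to compare against; your write-up supplies the standard one, and the only cosmetic point worth adding is that the exponentials should be taken in the unitization if $\mathfrak{A}$ is not assumed unital (in this paper all C$^*$-algebras are unital, so nothing is lost).
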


\begin{proposition}\label{prop:JW-operator-commutation}
  Let $A\sse \mathfrak{A}$ be a JC-algebra acting on the C*-algebra $\mathfrak{A}$. Elements of $A$ operator commute if and only if they commute as elements of $\mathfrak{A}$: $T_aT_b = T_bT_a \iff ab=ba$.
\end{proposition}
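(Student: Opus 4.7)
The direction $ab = ba \Rightarrow T_a T_b = T_b T_a$ is a direct verification: expanding the special Jordan product, both $T_a T_b c$ and $T_b T_a c$ reduce to the same symmetric sum of monomials in $a,b,c$ once $a$ and $b$ are allowed to commute, so there is nothing to prove.

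For the reverse implication I would first translate the Jordan-operator condition into a C$^*$-algebraic commutator identity. A short expansion of the special Jordan product gives, for any $c \in \mathfrak{A}$,
\[
  4\bigl(T_a T_b - T_b T_a\bigr)(c) \;=\; (ab-ba)c - c(ab-ba) \;=\; \bigl[[a,b],\, c\bigr].
\]
Hence $T_a T_b = T_b T_a$ on $A$ is equivalent to $m := [a,b] = ab-ba$ commuting in $\mathfrak{A}$ with every element of $A$. Specialising to $c = a \in A$ yields the crucial relation $[a, [a,b]] = 0$, and since $a, b \in \mathfrak{A}_{\sa}$ the element $m$ is skew-adjoint, hence normal.

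I would then invoke the Kleinecke--Shirokov theorem: in any Banach algebra, $[a,[a,b]] = 0$ implies that $[a,b]$ is quasi-nilpotent. Combined with the fact that the norm of a normal element in a C$^*$-algebra equals its spectral radius, quasi-nilpotence forces $\|m\| = 0$, so $m = 0$ and $ab = ba$.

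The main obstacle is producing quasi-nilpotence of $m$. The Fuglede--Putnam--Rosenblum theorem stated above, applied to the normal pair $(m, a)$ with $ma = am$, only recovers $m^* a = a m^*$, which is the same identity once one remembers $m^* = -m$; and naive attempts at expanding $m^2$ or $m^n$ using the commutation relations $[a,m] = [b,m] = 0$ collapse to tautologies. One really needs the derivation identity $m^n = \tfrac{1}{n!}\operatorname{ad}_a^n(b^n)$ underlying Kleinecke--Shirokov, which yields the bound $\|m^n\| \leq (2\|a\|\|b\|)^n/n!$ and hence $\|m^n\|^{1/n} \to 0$, from which normality closes the argument.
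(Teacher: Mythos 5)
Your proof is correct, and it takes a genuinely different route from the paper's. Both arguments start from the same identity $4(T_aT_b-T_bT_a)(c) = [[a,b],c]$, but from there you diverge: you specialise only to $c=a$, observe that $m=[a,b]$ is skew-adjoint (hence normal) because $a,b\in\mathfrak{A}_{\sa}$, and then kill $m$ by combining the Kleinecke--Shirokov theorem (which gives quasi-nilpotence of $m$ from $[a,[a,b]]=0$ via the derivation identity $\operatorname{ad}_a^n(b^n)=n!\,m^n$) with the C$^*$-fact that a normal quasi-nilpotent element has norm equal to its spectral radius, hence vanishes. The paper instead specialises to both $c=a$ and $c=b$, derives $ba^2b=ab^2a$ to show that the \emph{product} $ab$ is normal, applies Fuglede--Putnam--Rosenblum to get $a^2b=ba^2$, and then still needs a final step in the enveloping von Neumann algebra $\mathfrak{A}^{**}$ (range projections and an approximate pseudoinverse) to pass from $aba=a^2b$ to $ab=ba$. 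Your argument is shorter and stays entirely inside $\mathfrak{A}$, at the cost of importing Kleinecke--Shirokov rather than the Fuglede--Putnam--Rosenblum theorem that the paper reuses elsewhere (e.g.\ in Proposition~\ref{prop:JW-quadratic-commutation}); your observation that FPR applied to the pair $(m,a)$ is vacuous because $m^*=-m$ correctly explains why a different tool is needed on this route. Both proofs rely essentially on self-adjointness of $a$ and $b$, as they must, since the statement fails for general elements of an associative algebra.
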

\begin{proof}
  In~\cite[Lemma 5.1]{hanche1983structure} they prove this result using representation theory. We present here a more algebraic proof.
  Obviously, when $ab=ba$, we have $T_aT_b = T_bT_a$. So let us prove the converse direction.
  For $a,b,c\in A$ we easily calculate:
  \begin{equation}\label{eq:proof-operator-commutation-in-vNA}
    (T_aT_b-T_bT_a)c = \frac14 ((ab-ba)c - c(ab-ba))
  \end{equation}
  Hence, when $a$ and $b$ operator commute we have $(ab-ba)c = c(ab-ba)$ for all $c\in A$. 
  In particular, take $c=a$ and $c=b$ to get:
  \[2aba = ba^2 + a^2b \qquad 2bab = ab^2 + b^2a\]
  Now multiply the first equation by $b$ on the right and the second with $a$ on the left:
  \[2abab = ba^2b +a^2b^2 \qquad 2abab = a^2b^2 + ab^2a\]
  As the left-hand sides now agree, we can combine the equations to get $ba^2b = ab^2a$. 
  This equation shows that $ab$ is normal: $(ab)(ab)^* = ab^2a = ba^2b = (ab)^*(ab)$, and hence by the Fuglede-Putnam-Rosenblum theorem, since $(ab)a = a(ba)$ we also have $ba^2 = (ab)^*a = a(ba)^* = a^2b$ and so $b$ and $a^2$ commute.

  Recall that we had the equation $2aba = ba^2 + a^2b$. Using the operator commutation we get $aba = a^2b$. We claim that this equality in combination with $a^2b=ba^2$ implies that $ba = ab$. As the C$^*$-algebra $\mathfrak{A}$ embeds into the von Neumann algebra $\mathfrak{A}^{**}$, and these equations continue to hold in this von Neumann algebra it suffices to show this implication for von Neumann algebras.

  Let $r(a)$ denote the \emph{range-projection} of $a$ in $\mathfrak{A}^{**}$, \ie the smallest projection such that $r(a)a = a$. Recall that $r(a^2) = r(a)$.
  By applying the \emph{approximate pseudoinverse} of $a$ (see Ref.~\cite[Section~3.4.1]{bramthesis} for more details) to the left of $aba = a^2b$ we get $r(a)ba = r(a)ab = ab$. As $b$ commutes with $a^2$, it commutes with $r(a^2) = r(a)$, and hence $ab=r(a)ba = br(a)a= ba$, and we are done.
\end{proof}

\begin{proposition}\label{prop:JW-quadratic-commutation}
    Let $A\sse \mathfrak{A}$ be a JC-algebra acting on the C$^*$-algebra $\mathfrak{A}$. Let $a,b\in A$ with at least one of $a$ and $b$ positive. Then the following are equivalent.
    \begin{enumerate}[label=\alph*)]
        \item $Q_aQ_b =Q_bQ_a$.
        \item $Q_ab^2 = Q_b a^2$.
        \item $a$ and $b$ operator commute.
    \end{enumerate}
\end{proposition}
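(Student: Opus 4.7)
The plan is to prove $\text{(c)} \Rightarrow \text{(a)} \Rightarrow \text{(b)} \Rightarrow \text{(c)}$. For $\text{(c)} \Rightarrow \text{(a)}$, Proposition~\ref{prop:JW-operator-commutation} gives $ab = ba$ in $\mathfrak{A}$, whence $Q_a Q_b x = a(bxb)a = abxba = baxab = b(axa)b = Q_b Q_a x$ for every $x$. Applying (a) to the unit gives $Q_a b^2 = Q_a Q_b 1 = Q_b Q_a 1 = Q_b a^2$, so $\text{(a)} \Rightarrow \text{(b)}$.

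The substantive direction is $\text{(b)} \Rightarrow \text{(c)}$. In the enveloping C$^*$-algebra, the hypothesis reads $ab^2a = ba^2 b$, which is precisely the statement that $ab$ is normal: $(ab)(ab)^* = ab^2a = ba^2b = (ab)^*(ab)$. Assume without loss of generality that $b \geq 0$ (the hypothesis is symmetric in $a$ and $b$), and let $s = \sqrt{b} \in A$, so that $s$ is self-adjoint with $s^2 = b$ in $\mathfrak{A}$. Then $sas$ is self-adjoint, hence $\mathrm{Sp}(sas) \subseteq \mathbb{R}$. Applying the standard identity $\mathrm{Sp}(xy) \setminus \{0\} = \mathrm{Sp}(yx) \setminus \{0\}$, first with $x=a,\, y=b$ and then with $x=s,\, y=sa$, yields
\[ \mathrm{Sp}(ab) \setminus \{0\} = \mathrm{Sp}(ba) \setminus \{0\} = \mathrm{Sp}(s \cdot sa) \setminus \{0\} = \mathrm{Sp}(sa \cdot s) \setminus \{0\} = \mathrm{Sp}(sas) \setminus \{0\} \subseteq \mathbb{R}, \]
so $\mathrm{Sp}(ab) \subseteq \mathbb{R}$. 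A normal element with real spectrum is self-adjoint (the commutative C$^*$-algebra $C^*(1,ab)$ is Gelfand isomorphic to $C(\mathrm{Sp}(ab))$, on which the identity function coincides with its complex conjugate). Thus $ab = (ab)^* = ba$, and Proposition~\ref{prop:JW-operator-commutation} delivers operator commutativity of $a$ and $b$.

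The main obstacle is finding the right way to feed in the positivity of $b$. A tempting alternative is to imitate the Fuglede--Putnam trick of Proposition~\ref{prop:JW-operator-commutation}: applied to $(ab)a = a(ba)$, with both $ab$ and $ba$ normal, FPR immediately yields $a^2 b = ba^2$, but this commutation alone falls short of $ab = ba$ without further input. The spectrum argument above uses the positivity hypothesis exactly once, through the self-adjointness of $sas$, and is the cleanest route I see.
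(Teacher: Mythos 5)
Your proof is correct, and the substantive direction (b)\,$\Rightarrow$\,(c) is argued genuinely differently from the paper. Both proofs start by observing that $Q_ab^2=Q_ba^2$ reads $ab^2a=ba^2b$ in $\mathfrak{A}$, i.e.\ that $ab$ is normal; they diverge in how positivity is fed in. The paper (taking $a\geq 0$) applies Fuglede--Putnam--Rosenblum to $(ab)a=a(ba)$ to get $ba^2=a^2b$, and then closes the gap you flagged as ``falling short'': since $a\geq 0$, $a=\sqrt{a^2}$ is a norm limit of polynomials in $a^2$, so commuting with $a^2$ already forces commuting with $a$. So the FPR route does not actually fall short --- that last bicommutant step is exactly the paper's further input. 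Your route instead writes $b=s^2$ with $s=\sqrt{b}$ self-adjoint and uses $\mathrm{Sp}(xy)\setminus\{0\}=\mathrm{Sp}(yx)\setminus\{0\}$ to pass from $ab$ to the self-adjoint element $sas$, concluding $\mathrm{Sp}(ab)\subseteq\R$ and hence, by normality, $ab=(ab)^*=ba$ directly. This is a clean and self-contained alternative: it avoids FPR entirely in this direction (you only invoke the trivial half of Proposition~\ref{prop:JW-operator-commutation} at the end), at the price of importing the spectral-permanence facts about $\mathrm{Sp}(xy)$ versus $\mathrm{Sp}(yx)$ and the characterization of self-adjoint elements as normal elements with real spectrum. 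The remaining implications (c)\,$\Rightarrow$\,(a)\,$\Rightarrow$\,(b) match the paper's.
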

\begin{proof}
    a) to b) is trivial. For c) to a) we note that $a$ and $b$ operator commute if $ab=ba$ in $\mathfrak{A}$, and hence also $a$ and $a^2$ operator commute with $b$ and $b^2$. The result then follows by the definition of $Q_a$ and $Q_b$ in terms of $T_a, T_{a^2}, T_b$ and $T_{b^2}$. It remains to prove b) to c). Suppose $Q_ab^2 = Q_ba^2$. Written in terms of the associative product of $\mathfrak{A}$ this becomes $ab^2a = ba^2b$. Hence, $(ab)$ is normal. Without loss of generality, assume that $a$ is positive. Since $(ab)a = a(ba)$, by the Fuglede-Putnam-Rosenblum theorem: $ba^2 = (ab)^*a = a(ba)^* = a^2 b$, so that $b$ and $a^2$ commute. By positivity of $a$, $\sqrt{a^2} = a$. Since $\sqrt{a^2}$ lies in the bicommutant of $a^2$ we then see that $b$ also commutes with $a$ in $\mathfrak{A}$ and hence they operator commute in $A$.
    %
\end{proof}

\begin{remark}
	Of course c) implies a) and a) implies b) regardless of positivity of $a$ and $b$, but for the other implications, the requirement that at least one of $a$ and $b$ is positive is necessary. Take for instance any non-commuting $a$ and $b$ satisfying $a^2=b^2 = 1$ such as $a=\ket{+}\bra{+}-\ket{-}\bra{-}$ and $b=\ket{0}\bra{0}-\ket{1}\bra{1}$. Then b) holds, but a) and c) do not. Keeping $b$ the same, but letting $a=\ket{0}\bra{1}+\ket{1}\bra{0}$ we get a) but not c). 
\end{remark}

For our next results we will need the following powerful theorem which is shown in Theorem 7.2.5 of Ref.~\cite{hanche1984jordan}.
\begin{theorem}[Shirshov-Cohn for JB-algebras]
	A JB-algebra generated by two elements (and possibly the unit) is a JC-algebra.
\end{theorem}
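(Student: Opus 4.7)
The plan is to combine the classical (purely algebraic) Shirshov–Cohn theorem with the structure theory for JBW-algebras summarised in Theorem~\ref{thm:JBW-decomposition}. The classical result, whose proof is combinatorial and goes via the free Jordan algebra on two generators, asserts that any abstract Jordan algebra over a field of characteristic $\neq 2$ generated by two elements (and a unit) is special, \ie admits an injective Jordan homomorphism into the self-adjoint part of an associative $*$-algebra. Applied to the algebraic Jordan subalgebra $J_0$ of polynomials in $a$, $b$ and $1$, this already tells us that $J_0$ is special. The real content of the theorem we are proving is therefore that the norm-closure $J$ of $J_0$ inherits specialness in a \emph{topological} sense, \ie is isometrically a Jordan subalgebra of the self-adjoint part of some C$^*$-algebra.

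The natural route is to pass to the double dual $J^{**}$, which is a JBW-algebra into which $J$ embeds isometrically, and apply Theorem~\ref{thm:JBW-decomposition} to obtain the direct-sum decomposition $J^{**} = M_{\text{ex}} \oplus M_{\text{sp}}$, with $M_{\text{sp}}$ a JW-algebra and $M_{\text{ex}}$ purely exceptional. Since every JW-algebra is by definition a JC-algebra, it suffices to show $M_{\text{ex}} = 0$; then $J \hookrightarrow M_{\text{sp}}$ is the desired embedding into the self-adjoint part of a von Neumann algebra.

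To rule out the exceptional summand, let $e\in J^{**}$ be the central projection onto $M_{\text{ex}}$. The images $a' := Q_e a$ and $b' := Q_e b$ generate $M_{\text{ex}}$ as a JBW-algebra, and by Shultz's theorem $M_{\text{ex}} \cong C(X,M_3(\mathbb{O})_{\sa})$ for some hyperstonean space $X$. For each $x\in X$, the algebraic Jordan subalgebra of $M_3(\mathbb{O})_{\sa}$ generated by $a'(x)$ and $b'(x)$ is special by classical Shirshov–Cohn, hence a proper (in particular, special) subalgebra of the Albert algebra. The strategy is to use this fibrewise specialness, together with weak continuity of the Jordan product, to build a non-zero Jordan homomorphism from $M_{\text{ex}}$ into the self-adjoint part of a C$^*$-algebra. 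Such a homomorphism cannot exist by the very definition of \Define{purely exceptional}, and this contradiction forces $M_{\text{ex}} = 0$.

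The hard step is clearly this last one: promoting algebraic specialness at each fibre to a \emph{global} non-zero Jordan homomorphism of $M_{\text{ex}}$ into a C$^*$-algebra. Specialness is an algebraic property that need not survive weak closures, so one has to combine the 2-generation hypothesis (which is what makes the fibres land in a common 2-generated subalgebra of the finite-dimensional Albert algebra) with Shultz's explicit $C(X,M_3(\mathbb{O})_{\sa})$ picture to exhibit the required homomorphism—for instance by composing a continuous choice of embedding of the 2-generated fibre subalgebra into some matrix algebra with evaluation. This is essentially the content of the detailed argument in \cite{hanche1984jordan}, and is the reason the theorem is non-trivial despite the algebraic half being a direct consequence of the abstract Shirshov–Cohn theorem.
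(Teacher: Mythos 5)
The paper itself offers no proof here (it simply cites Theorem~7.2.5 of Hanche-Olsen and St{\o}rmer), so your proposal must stand on its own, and its decisive step fails. The contradiction you aim for --- ``build a non-zero Jordan homomorphism from $M_{\text{ex}}$ into the self-adjoint part of a C$^*$-algebra'' --- is unobtainable in principle: $M_{\text{ex}}$ is \emph{purely exceptional} by Theorem~\ref{thm:JBW-decomposition}, so it admits no such non-zero homomorphism whether or not it vanishes; this is no contradiction but simply the definition. What the fibrewise Shirshov--Cohn argument actually produces are homomorphisms out of $J$, namely $J\to C(X,M_3(\mathbb{O})_{\sa})\to M_3(\mathbb{O})_{\sa}$ (projection followed by evaluation at $x$), whose image lies in the proper, special, finite-dimensional (hence closed) subalgebra $B_x$ generated by $a'(x)$, $b'(x)$ and the unit. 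These are representations of $J$, not of $M_{\text{ex}}$, and specialness of the fibre subalgebras says nothing about the weak closure $M_{\text{ex}}$ --- as you yourself concede when you note that specialness need not survive weak closure. Moreover $M_{\text{ex}}=0$ is a stronger conclusion than the theorem requires, and your plan defers precisely the step where all the work lies to the reference.

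The repair is to abandon the goal $M_{\text{ex}}=0$ and instead use the fibre representations directly. Form
\[
\pi \;:=\; \pi_{\text{sp}}\oplus\Bigl(\textstyle\bigoplus_{x\in X}\,\hat{x}\circ\pi_{\text{ex}}\Bigr)\colon\; J\longrightarrow M_{\text{sp}}\oplus\Bigl(\textstyle\bigoplus_{x\in X} B_x\Bigr),
\]
where the direct sum is the $\ell^\infty$-sum. Each $B_x$ is a finite-dimensional, formally real, special Jordan algebra and therefore a JC-algebra, while $M_{\text{sp}}$ is a JW-algebra, so the codomain embeds into the self-adjoint part of a C$^*$-algebra. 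The map $\pi$ is injective: if $\pi(c)=0$ then the special component of $c$ vanishes and the exceptional component is a continuous function $X\to M_3(\mathbb{O})_{\sa}$ vanishing at every point, hence zero. Since an injective Jordan homomorphism between JB-algebras is automatically isometric with norm-closed range, $J$ is a JC-algebra. This assembly of the special summand with the pointwise evaluations, rather than the vanishing of $M_{\text{ex}}$, is the substance of the cited proof.
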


Note that the proof given in Ref.~\cite{hanche1984jordan} applies equally well for JBW-algebras:

\begin{theorem}[Shirshov-Cohn for JBW-algebras]
	A JBW-algebra generated by two elements (and possibly the unit) is a JW-algebra.
\end{theorem}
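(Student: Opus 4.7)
The plan is to bootstrap the JB-algebra Shirshov-Cohn theorem to the JBW-setting via the double-dual construction. Let $M$ be a JBW-algebra generated (as a JBW-algebra) by two elements $a,b$ and the unit, and let $B\sse M$ denote the norm closure of the Jordan polynomials in $a,b$, which is a JB-subalgebra of $M$.

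The first step is to apply the JB-algebra Shirshov-Cohn theorem to $B$, yielding an isometric Jordan embedding $\iota\colon B\hookrightarrow \mathfrak{A}_{\sa}$ into the self-adjoint part of some unital C$^*$-algebra $\mathfrak{A}$. By \cite[Theorem 4.4.3]{hanche1984jordan} the bidual $B^{**}$ is a JBW-algebra, while $\mathfrak{A}^{**}$ is a von Neumann algebra whose self-adjoint part is canonically identified with $(\mathfrak{A}_{\sa})^{**}$. The bidual map $\iota^{**}\colon B^{**}\to (\mathfrak{A}^{**})_{\sa}$ is again an isometric Jordan homomorphism and is weak-$*$-continuous, hence normal. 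Since $\iota$ is an isometric embedding, the predual map $\iota^*$ is a quotient (by Hahn-Banach), which forces $\iota^{**}$ to be a weak-$*$ homeomorphism onto its image; in particular the image is ultraweakly closed, exhibiting $B^{**}$ as a JW-algebra sitting inside $\mathfrak{A}^{**}$.

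The second step is to realize $M$ itself as a direct summand of $B^{**}$. By the universal property of the bidual (any Jordan homomorphism from a JB-algebra into a JBW-algebra lifts uniquely to a normal Jordan homomorphism on the bidual), the inclusion $B\hookrightarrow M$ extends to a normal Jordan homomorphism $\pi\colon B^{**}\to M$. The image $\pi(B^{**})$ is a weakly closed Jordan subalgebra of $M$ containing $B$, so it equals $M$ by minimality, and $\pi$ is surjective. The kernel is a weakly closed Jordan ideal of the JBW-algebra $B^{**}$, hence of the form $zB^{**}$ for a central projection $z$, so $M\cong (1-z)B^{**}$ as JBW-algebras. Letting $\tilde z\in\mathfrak{A}^{**}$ denote the central carrier of $\iota^{**}(1-z)$, the composition $(1-z)B^{**}\hookrightarrow \tilde z\,\mathfrak{A}^{**}$ realizes $M$ as an ultraweakly closed Jordan subalgebra of a von Neumann algebra, so $M$ is a JW-algebra.

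The hard part will be the bidual machinery itself: verifying the universal property of $B^{**}$ as the enveloping JBW-algebra of $B$, and showing that the bidual of an isometric Jordan embedding of JB-algebras is a normal isometric embedding with ultraweakly closed image. These are Jordan-algebra analogues of standard C$^*$-algebraic facts, and their availability in the JBW-portion of~\cite{hanche1984jordan} is what lets the author's JB-proof ``apply equally well'' to the JBW-setting rather than the argument being literally the same.
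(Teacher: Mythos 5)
Your proof is correct, but it takes a genuinely different route from the one the paper points to. The paper gives no argument of its own: it merely observes that the proof of Theorem 7.2.5 in~\cite{hanche1984jordan} applies verbatim to JBW-algebras. That proof works by decomposing the relevant JBW-algebra into its special and purely exceptional summands and using the algebraic Shirshov--Cohn theorem together with Shultz's description of purely exceptional JBW-algebras to show that the exceptional summand of a two-generated JBW-algebra vanishes; in fact the JW-conclusion for the bidual is the intermediate step there from which the JC-conclusion is deduced, so the JBW statement is in effect what is actually proved. You instead take the JB-version as a black box and promote it formally: $B^{**}$ is a JW-algebra because the bidual of an isometric Jordan embedding into $\mathfrak{A}_{\sa}$ is a normal isometric embedding with weak-$*$-closed image (Krein--Smulian), and $M$ is a direct summand of $B^{**}$ by the universal extension property of the bidual together with the fact that weakly closed ideals are cut out by central projections. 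All of the facts you defer to are standard and available in Chapter~4 of~\cite{hanche1984jordan}, so the argument closes; what it buys is independence from the internals of the JB-proof, at the cost of invoking more of the bidual machinery. One small simplification: once $M\cong (1-z)B^{**}$ with $z$ central, that summand is already an ultraweakly closed Jordan subalgebra of $(\mathfrak{A}^{**})_{\sa}$ (it is the image of the weak-$*$-compact unit ball under the idempotent normal map $Q_{1-z}$), so with the paper's definition of a JW-algebra the detour through the central carrier $\tilde z$ is unnecessary.
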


\begin{proposition}\label{prop:JBW-two-elements-associative}
	Let $A$ be a JB(W)-algebra, and suppose $a,b \in A$ either operator commute or at least one of them is positive and they satisfy $Q_ab^2 = Q_ba^2$. Then the JB(W)-algebra spanned by $a$ and $b$ (and possibly the unit) is associative.
\end{proposition}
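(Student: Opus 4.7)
The plan is to collapse everything to the JC/JW case via Shirshov-Cohn and then read off the result from the two previous propositions. First I would apply the Shirshov-Cohn theorem for JB-algebras (respectively JBW-algebras) to the Jordan subalgebra $B$ generated by $a$, $b$, and $1$, so that we may assume $B \sse \mathfrak{A}_{\sa}$ for some C$^*$-algebra $\mathfrak{A}$ (respectively an ultraweakly closed Jordan subalgebra of the self-adjoint part of some von Neumann algebra $\mathfrak{A}$).

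Next I would invoke the prior propositions to move from the Jordan hypothesis to an associative-commutation statement inside $\mathfrak{A}$. If $a$ and $b$ operator commute in $B$, Proposition~\ref{prop:JW-operator-commutation} gives $ab = ba$ in $\mathfrak{A}$. If instead one of them is positive and $Q_a b^2 = Q_b a^2$, Proposition~\ref{prop:JW-quadratic-commutation} gives the same conclusion $ab = ba$. In either case the $*$-subalgebra of $\mathfrak{A}$ generated by $a$, $b$, and $1$ is commutative.

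Now let $\mathfrak{C}$ be the norm closure of this $*$-subalgebra in the JB case, or its ultraweak closure in the JBW case. Then $\mathfrak{C}$ is a commutative C$^*$-algebra (resp.\ von Neumann algebra), and on such an algebra the Jordan product $\frac12(xy+yx)$ coincides with the associative product, so $\mathfrak{C}_{\sa}$ is an associative Jordan algebra. Since $\mathfrak{C}_{\sa}$ contains $a$, $b$, $1$ and is closed in the relevant topology under the Jordan product, the JB(W)-algebra generated by $a$ and $b$ is contained in $\mathfrak{C}_{\sa}$, and is therefore associative.

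The only real work is the reduction $ab = ba$ already packaged in Propositions~\ref{prop:JW-operator-commutation} and \ref{prop:JW-quadratic-commutation} via Fuglede-Putnam-Rosenblum; everything after that is standard commutative C$^*$-algebra/Gelfand machinery. The mild point to check in the JBW case is that the ultraweak closure of a commutative $*$-subalgebra of a von Neumann algebra is again commutative, and that it indeed absorbs the weakly closed Jordan algebra generated by $a, b, 1$, both of which follow from the ultraweak continuity of multiplication in $\mathfrak{A}$ together with the ultraweakly closed form of the Shirshov-Cohn embedding.
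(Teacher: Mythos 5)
Your proof is correct, and its first two steps coincide exactly with the paper's: reduce to the JC/JW case via Shirshov--Cohn, then invoke Propositions~\ref{prop:JW-operator-commutation} and~\ref{prop:JW-quadratic-commutation} to obtain $ab=ba$ in the enveloping C$^*$-algebra. Where you diverge is the finishing move. The paper stays inside Jordan-algebra language: from $ab=ba$ it deduces that $a,a^2$ operator commute with $b$, applies Proposition~\ref{prop:commuting-elements-span-associative-algebra} (ultimately Proposition~\ref{prop:simple-operator-commutation} plus the commutant-is-a-subalgebra result) to get an associative JB(W)-subalgebra containing $a$ and $b$, and concludes by minimality of the generated algebra. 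You instead pass to the commutative C$^*$-subalgebra (resp.\ abelian von Neumann subalgebra) generated by $a,b,1$ and observe that the generated Jordan algebra sits inside its self-adjoint part, where the Jordan product is the associative product. Your route is more self-contained at this stage --- it does not need the commutant machinery --- but it shifts the burden onto the compatibility of the Shirshov--Cohn embedding with the ultraweak topology in the JBW case: you need that the image of the weakly generated JBW-algebra lands in the ultraweak closure of the commutative $*$-algebra, which requires identifying the weak topology of the JW-algebra with the restricted ultraweak topology of the ambient von Neumann algebra. You flag this correctly, and it is true, but it is precisely the point the paper's minimality argument lets one avoid spelling out. Both proofs are valid; the paper's reuses a proposition it needs elsewhere anyway, while yours is arguably the more transparent picture of \emph{why} the generated algebra is associative.
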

\begin{proof}
	Let $B$ denote the JB(W)-algebra spanned by $a$ and $b$. By the Shirshov-Cohn theorem, $B$ is a JC-algebra (respectively JW-algebra). Let $\mathfrak{B}$ denote the C$^*$-algebra $B$ acts on.

	If $a$ and $b$ operator commute in $A$, then they of course also operator commute in $B$ and hence by Proposition~\ref{prop:JW-operator-commutation} $ab=ba$ in $\mathfrak{B}$. Similarly, but by Proposition~\ref{prop:JW-quadratic-commutation}, if $a$ and $b$ are positive and satisfy $Q_ab^2 = Q_ba^2$, they operator commute (inside of $B$), and hence also $ab=ba$ in $\mathfrak{B}$.

	In both cases we then also have $a^2b = ba^2$ so that $a^2$ operator commutes with $b$ when restricted to $B$. By Proposition~\ref{prop:commuting-elements-span-associative-algebra}, there is an associative JB(W)-subalgebra $B'$ of $B$ containing both $a$ and $b$. But as $B$ is already the smallest JB(W)-algebra generated by $a$ and $b$ we necessarily have $B' = B$.
\end{proof}

\begin{corollary}\label{cor:JBW-assoc-quadratic}
	Let $a,b\in A$ in a JB-algebra with at least one of $a$ and $b$ being positive. If $Q_ab^2 = Q_ba^2$, then $Q_ab^2 = a^2*b^2$.
\end{corollary}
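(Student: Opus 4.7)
The plan is to read off the conclusion directly from Proposition~\ref{prop:JBW-two-elements-associative}. That proposition already delivers, under exactly the hypotheses of the corollary, that the JB-subalgebra $B$ generated by $a$ and $b$ is associative (in the sense of the Jordan product, i.e.\ $(x*y)*z = x*(y*z)$ for all $x,y,z\in B$). So the entire task is to observe what the quadratic operator reduces to on an associative Jordan algebra.

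First I would invoke Proposition~\ref{prop:JBW-two-elements-associative} with the assumption $Q_ab^2 = Q_ba^2$ and one of $a,b$ positive, to conclude that the JB-algebra $B\subseteq A$ generated by $a$ and $b$ is associative. Since $a,b\in B$ and also $a^2, b^2\in B$, every computation of $Q_a b^2$ can be carried out inside $B$ using only its Jordan product.

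Next, inside the associative Jordan algebra $B$ I would note that for any $x,y,z\in B$,
\[
T_xT_y z \;=\; x*(y*z) \;=\; (x*y)*z \;=\; T_{x*y}z,
\]
so $T_xT_y = T_{x*y}$ as operators on $B$. In particular $T_a^2 = T_{a*a} = T_{a^2}$, and therefore
\[
Q_a\bigr|_B \;=\; 2T_a^2 - T_{a^2} \;=\; 2T_{a^2} - T_{a^2} \;=\; T_{a^2}.
\]
Applying this to $b^2 \in B$ yields $Q_a b^2 = T_{a^2}b^2 = a^2 * b^2$, which is the desired identity.

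There is no real obstacle: the entire difficulty has been absorbed into Proposition~\ref{prop:JBW-two-elements-associative}, and the corollary is just the elementary observation that on an associative Jordan algebra the quadratic map coincides with multiplication by the square. One small sanity check worth noting (but not a serious issue) is that the definition of $Q_a$ must be read as $Q_a = 2T_a^2 - T_{a^2}$ so that the cancellation $2T_{a^2} - T_{a^2} = T_{a^2}$ produces exactly $a^2 * b^2$ and the identities $Q_a 1 = a^2$ and $Q_1 = \id$ hold.
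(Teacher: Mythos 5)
Your proof is correct and follows the paper's own route exactly: invoke Proposition~\ref{prop:JBW-two-elements-associative} to get associativity of the subalgebra generated by $a$ and $b$, then observe that $Q_a=T_{a^2}$ there (the paper simply states ``the statement is true for associative algebras'' without writing out this last computation). Your side remark is also well taken: the displayed definition $Q_a := 2T_a^2 - 2T_{a^2}$ in the paper is a typo for $Q_a = 2T_a^2 - T_{a^2}$, as the introduction and the identity $Q_a 1 = a^2$ confirm.
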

\begin{proof}
	By the previous proposition, $a$ and $b$ span an associative algebra, and the statement is true for associative algebras.
\end{proof}

\begin{proposition}
    Let $E$ be a Euclidean Jordan algebra with $a,b \in E$ where at least one of $a$ and $b$ is positive. Then the following are equivalent.
    \begin{enumerate}[label=\alph*)]
    	\item $Q_aQ_b =Q_bQ_a$.
        \item $Q_ab^2 = Q_b a^2$.
        \item $b$ and $b^2$ operator commute with $a$ and $a^2$.
    \end{enumerate}
\end{proposition}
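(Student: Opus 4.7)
The plan is to prove the cycle (a) $\Rightarrow$ (b) $\Rightarrow$ (c) $\Rightarrow$ (a), treating the Euclidean statement essentially as a corollary of the general JB(W)-algebra machinery already assembled in Proposition~\ref{prop:JBW-two-elements-associative}, since every EJA is a JB-algebra.

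The implication (a) $\Rightarrow$ (b) is immediate: evaluate both sides of $Q_aQ_b = Q_bQ_a$ at the unit and use $Q_x 1 = x^2$, which gives $Q_a b^2 = Q_b a^2$. For (c) $\Rightarrow$ (a), the hypothesis asserts that the four operators $T_a$, $T_{a^2}$, $T_b$, $T_{b^2}$ commute pairwise. Since $Q_a = 2T_a^2 - T_{a^2}$ is a polynomial in $T_a$ and $T_{a^2}$ while $Q_b = 2T_b^2 - T_{b^2}$ is a polynomial in $T_b$ and $T_{b^2}$, every monomial in the former commutes with every monomial in the latter, hence $Q_aQ_b = Q_bQ_a$.

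The substantive implication is (b) $\Rightarrow$ (c). Here I would invoke Proposition~\ref{prop:JBW-two-elements-associative}: because a Euclidean Jordan algebra is in particular a JB-algebra, the hypothesis $Q_a b^2 = Q_b a^2$ together with the positivity of one of $a,b$ forces the JB-subalgebra generated by $a$ and $b$ (and the unit) to be associative. In an associative Jordan algebra every pair of elements operator commutes; in particular $a$ and $a^2$ operator commute with $b$ and $b^2$, which is exactly (c).

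The only potential obstacle in this argument is the one already handled by Proposition~\ref{prop:JBW-two-elements-associative}: the passage from the identity $Q_a b^2 = Q_b a^2$ to associativity of the two-generated algebra. That step is where the real work happens, as it requires the Shirshov--Cohn embedding of the two-generated Jordan algebra into a C$^*$-algebra together with the Fuglede--Putnam--Rosenblum theorem (via Proposition~\ref{prop:JW-quadratic-commutation}). Beyond invoking those results, no additional difficulty specific to the Euclidean setting arises, so the proof is short.
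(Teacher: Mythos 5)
Your implications (a)~$\Rightarrow$~(b) and (c)~$\Rightarrow$~(a) are fine, but the step (b)~$\Rightarrow$~(c) has a genuine gap. After invoking Proposition~\ref{prop:JBW-two-elements-associative} to conclude that the subalgebra $B$ generated by $a$, $b$ and $1$ is associative, you assert that ``in an associative Jordan algebra every pair of elements operator commutes,'' and hence that $a,a^2$ operator commute with $b,b^2$. But operator commutation is a statement about the operators $T_a,T_b$ on the \emph{whole} algebra $E$: one needs $a*(c*b)=(a*c)*b$ for every $c\in E$, whereas associativity of $B$ only gives this for $c\in B$. The paper is explicit (introduction and the remark after Proposition~\ref{prop:simple-operator-commutation}, citing Remark~2.5.2 of Hanche-Olsen--St\o rmer) that in a general Jordan algebra two elements can generate an associative subalgebra without operator commuting, so this inference cannot be made for free. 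Worse, the statement that associativity of the generated subalgebra \emph{does} imply operator commutation in a JB(W)-algebra is precisely implication b)~$\Rightarrow$~a) of Theorem~\ref{thm:JBW-assoc-iff-commute}, whose proof goes through Proposition~\ref{prop:JBW-quadratic-commute}, which in turn rests on Lemma~\ref{lem:exceptional-quadratic-commutation}, which is proved pointwise \emph{from the Euclidean proposition you are trying to establish}. So your route is circular in addition to being incomplete.

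The paper closes this gap using structure specific to the Euclidean setting. First it proves (b)~$\Rightarrow$~(a) directly: $E$ is a finite-dimensional Hilbert space, the $Q_x$ are self-adjoint elements of the C$^*$-algebra $B(E)$ (positive when $x\geq 0$), the fundamental identity turns $Q_ab^2=Q_ba^2$ into $Q_aQ_b^2Q_a=Q_bQ_a^2Q_b$, i.e.\ $Q_aQ_b$ is normal in $B(E)$, and Fuglede--Putnam--Rosenblum yields $Q_aQ_b=Q_bQ_a$. Then, with associativity of $B$ in hand from Proposition~\ref{prop:JBW-two-elements-associative}, it applies the same argument to $a+1\in B$ (using $Q_{a+1}b^2=(a+1)^2*b^2=Q_b(a+1)^2$) and extracts $T_a$ from the identity $T_a=Q_{a+1}-Q_a-\id$, obtaining $[T_a,Q_b]=0$ and then $[T_a,T_b]=0$; repeating with $a^2$ and $b^2$ gives (c). Some argument of this kind, converting commutation of quadratic maps into commutation of the product operators on all of $E$, is indispensable and is missing from your proposal.
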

\begin{proof}
	a) to b) and c) to a) are trivial, hence it suffices to prove b) to c). Nevertheless, it will be useful to first prove b) to a).

	So assume that $Q_ab^2 = Q_ba^2$. 
	Since $E$ is a Euclidean Jordan algebra, it is a Hilbert space, and hence the space of bounded operators on $E$, $B(E)$, is a C$^*$-algebra. Recall that $Q_a$ for any $a\in E$ is a self-adjoint operator and hence, if $a$ is positive, $Q_a = Q_{\sqrt{a}}^2$ is a positive operator in the Hilbert space sense, \ie positive in $B(E)$.
	By the fundamental equality we have $Q_aQ_b^2Q_a = Q_{Q_ab^2} = Q_{Q_ba^2} = Q_bQ_a^2Q_b$ so that $Q_aQ_b$ is normal as an element of $B(E)$. Hence, analogously to the proof of Proposition~\ref{prop:JW-quadratic-commutation}, we can use the Fuglede-Putnam-Rosenblum theorem to conclude that $Q_a Q_b = Q_bQ_a$ which proves b) to a).

	Now for b) to c), since $Q_ab^2 = Q_ba^2$, by Proposition~\ref{prop:JBW-two-elements-associative}, $a$, $b$ and $1$ span an associative JBW-algebra $B$ which in this case is an EJA. 
	Then $a+1$ also lies in $B$, and hence $Q_{a+1}b^2 = (a+1)^2*b^2 = Q_b (a+1)^2$. Using b) to a) we then see that $Q_{a+1}Q_b = Q_bQ_{a+1}$. As also $Q_aQ_b = Q_bQ_a$ and $T_a = Q_{a+1} - Q_a - \id$ we then necessarily have $T_aQ_b = Q_bT_a$. In a similar way we also get $T_aQ_{b+1} = Q_{b+1}T_a$ and hence $T_aT_b = T_bT_a$. As $a^2$ is also part of the same associative JBW-algebra, we can repeat the argument with $a^2$ instead of $a$ to see that $T_{a^2}T_b = T_bT_{a^2}$. Similarly, we can take $b^2$ instead of $b$.
\end{proof}

\begin{lemma}\label{lem:exceptional-quadratic-commutation}
	Let $A$ be a purely exceptional JBW-algebra with $a,b \in A$ where at least one of $a$ and $b$ is positive. Then the following are equivalent.
	\begin{enumerate}[label=\alph*)]
    	\item $Q_aQ_b =Q_bQ_a$.
        \item $Q_ab^2 = Q_b a^2$.
        \item $b$ and $b^2$ operator commute with $a$ and $a^2$.
    \end{enumerate}
\end{lemma}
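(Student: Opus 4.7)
The plan is to appeal to Shultz's structure theorem for purely exceptional JBW-algebras and reduce pointwise to the Euclidean Jordan algebra case proved in the preceding proposition. By Shultz, $A \cong C(X, E_0)$ where $E_0 := M_3(\mathbb{O})_{\sa}$ is the exceptional Albert algebra --- a finite-dimensional EJA --- and $X$ is a hyperstonean compact Hausdorff space. The Jordan product on $C(X, E_0)$ is pointwise, and hence so are $T_a$, $Q_a$, squaring, and positivity: in particular $a \geq 0$ in $A$ iff $a(x) \geq 0$ in $E_0$ for every $x \in X$.

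Since c) $\Rightarrow$ a) $\Rightarrow$ b) is immediate, only b) $\Rightarrow$ c) requires work. I would reformulate each of the three conditions as the conjunction over $x \in X$ of its pointwise version on $E_0$. Condition b), $Q_a b^2 = Q_b a^2$, holds globally iff it holds at every $x$, since both sides are continuous functions of $x$. For the operator identities in a) and c), the reformulation uses that every $d \in E_0$ arises as $c(x)$ for some $c \in A$ (take $c$ constant): this shows, for example, that $Q_a Q_b = Q_b Q_a$ as operators on $A$ iff $Q_{a(x)} Q_{b(x)} = Q_{b(x)} Q_{a(x)}$ as operators on $E_0$ for every $x$, and similarly for the four operator commutation identities relating $a, a^2$ and $b, b^2$. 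Combined with the pointwise nature of the positivity hypothesis, the EJA proposition applies at each $x$ individually, yielding the equivalences pointwise, which stitch back together into the global equivalences in $A$.

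I do not anticipate a substantive obstacle; the lemma is essentially a transcription of the EJA result across Shultz's decomposition. The only care required is in the pointwise reformulations of the operator identities, but these reduce to the observation that evaluation at $x$ is a unital Jordan homomorphism $A \to E_0$ and that constant functions lie in $A$, so operator equations on $A$ can be tested fibre by fibre over all of $E_0$. In this sense the purely exceptional case reduces cleanly to the finite-dimensional Albert-algebra case, parallel to how Proposition~\ref{prop:JBW-two-elements-associative} handles the JW part via Shirshov--Cohn.
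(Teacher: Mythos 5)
Your proposal is correct and follows essentially the same route as the paper: invoke Shultz's theorem to write $A\cong C(X,M_3(\mathbb{O})_{\sa})$, observe that the product, the quadratic map, and positivity are all pointwise, apply the Euclidean Jordan algebra proposition fibrewise, and note that pointwise operator commutation of the fibres gives operator commutation on $A$ by evaluating at each $x$. The only cosmetic difference is that you also verify the (unneeded for b)$\Rightarrow$c)) converse direction that global operator identities imply pointwise ones via constant functions, which the paper omits.
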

\begin{proof}
	a) to b) and c) to a) are trivial, so only b) to c) remains. Hence, suppose that $Q_ab^2 = Q_ba^2$.

	Since $A$ is a purely exceptional, $A\cong C(X,E)$ where $X$ is a hyperstonean compact Hausdorff space, and $E=M_3(\mathbb{O})_{\text{sa}}$ is the exceptional Albert algebra and in particular, $E$ is an EJA. Let $f,g:X\rightarrow E$ denote the functions corresponding to $a$ and $b$. Then for every $x\in X$, $Q_{f(x)}g(x)^2 = (Q_f g^2)(x) = (Q_g f^2)(x) = Q_{g(x)} f(x)^2$. As these are elements of an EJA, we use the previous proposition to see that $T_{g(x)}$ and $T_{g(x)^2}$ operator commute with $T_{f(x)}$ and $T_{f(x)^2}$ for all $x\in X$. We wish to conclude from this that $T_g$ and $T_{g^2}$ operator commute with $T_f$ and $T_{f^2}$. So let $h:X\rightarrow E$ be any other function, then we should have for every $x\in X$, $(T_fT_gh)(x) = (T_gT_fh)(x)$ (and similarly for $f^2$ and $g^2$), but as $(T_fT_gh)(x) = T_{f(x)}T_{g(x)}h(x)$ this of course directly follows.
\end{proof}

\begin{proposition}\label{prop:JBW-quadratic-commute}
	Let $A$ be a JBW-algebra with $a,b \in A$ where at least one of $a$ and $b$ is positive. Then the following are equivalent.
	\begin{enumerate}[label=\alph*)]
    	\item $Q_aQ_b =Q_bQ_a$.
        \item $Q_ab^2 = Q_b a^2$.
        \item $b$ and $b^2$ operator commute with $a$ and $a^2$.
    \end{enumerate}
\end{proposition}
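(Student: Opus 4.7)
The implications a) $\Rightarrow$ b) and c) $\Rightarrow$ a) are trivial (the first from applying $Q_aQ_b = Q_bQ_a$ to the unit and using $Q_c 1 = c^2$, the second from expanding $Q_a, Q_b$ in the product operators that commute by hypothesis). So the entire content of the proposition is the implication b) $\Rightarrow$ c), and the plan is to reduce it to the two cases already handled, namely Proposition \ref{prop:JW-quadratic-commutation} for JW-algebras and Lemma \ref{lem:exceptional-quadratic-commutation} for purely exceptional JBW-algebras.

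First I would invoke Theorem \ref{thm:JBW-decomposition} to write $A = A_{\text{ex}} \oplus A_{\text{sp}}$, with $A_{\text{sp}}$ a JW-algebra and $A_{\text{ex}}$ purely exceptional. Decompose $a = a_{\text{ex}} + a_{\text{sp}}$ and $b = b_{\text{ex}} + b_{\text{sp}}$ accordingly. Because this is a direct sum decomposition of Jordan algebras, the Jordan product, the squaring operation, the quadratic map, positivity, and operator commutation all respect the decomposition: e.g. $Q_a b^2 = Q_{a_{\text{ex}}} b_{\text{ex}}^2 + Q_{a_{\text{sp}}} b_{\text{sp}}^2$, and positivity of $a$ forces positivity of both $a_{\text{ex}}$ and $a_{\text{sp}}$. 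Hence the hypothesis $Q_a b^2 = Q_b a^2$ passes to each summand, and it suffices to establish c) in each of $A_{\text{ex}}$ and $A_{\text{sp}}$.

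On the purely exceptional side, Lemma \ref{lem:exceptional-quadratic-commutation} applies directly and gives c) for $a_{\text{ex}}, b_{\text{ex}}$. On the JW side, I would first apply Proposition \ref{prop:JW-quadratic-commutation} to obtain that $a_{\text{sp}}$ and $b_{\text{sp}}$ operator commute. To bootstrap from operator commutation of $a_{\text{sp}}, b_{\text{sp}}$ to the stronger statement in c), I would use Proposition \ref{prop:JW-operator-commutation}: operator commutation in a JC-algebra coincides with commutation in the ambient C$^*$-algebra $\mathfrak{A}$, and once $a_{\text{sp}} b_{\text{sp}} = b_{\text{sp}} a_{\text{sp}}$ in $\mathfrak{A}$, powers commute too, so $a_{\text{sp}}^2$ and $b_{\text{sp}}^2$ also commute with each of $a_{\text{sp}}, b_{\text{sp}}$ in $\mathfrak{A}$, and translating back via Proposition \ref{prop:JW-operator-commutation} yields all four operator commutations in $A_{\text{sp}}$. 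Combining the two summands reassembles c) in $A$.

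There is no serious obstacle beyond checking that everything really is compatible with the direct sum decomposition, which is a routine verification; the genuine work has already been done in the JW and purely exceptional cases. The only mildly delicate point is not to forget that Proposition \ref{prop:JW-quadratic-commutation} only directly delivers operator commutation of $a_{\text{sp}}$ and $b_{\text{sp}}$, and that the upgrade to the full set of operator commutations in c) is what Proposition \ref{prop:JW-operator-commutation} is for.
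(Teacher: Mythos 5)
Your proposal is correct and follows essentially the same route as the paper: decompose $A$ into its JW and purely exceptional summands via Theorem~\ref{thm:JBW-decomposition}, pass the hypothesis to each summand, and invoke Proposition~\ref{prop:JW-quadratic-commutation} and Lemma~\ref{lem:exceptional-quadratic-commutation} respectively. Your extra care in upgrading ``$a_{\text{sp}}$ and $b_{\text{sp}}$ operator commute'' to the full set of commutations in c) via Proposition~\ref{prop:JW-operator-commutation} is a sensible explicit spelling-out of a step the paper leaves implicit (it is absorbed into the c)\,$\Rightarrow$\,a) direction of Proposition~\ref{prop:JW-quadratic-commutation}).
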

\begin{proof}
	Write $A=A_1\oplus A_2$ where $A_1$ is a JW-algebra, and $A_2$ is a purely exceptional JBW-algebra. If $Q_ab^2 = Q_ba^2$, then this equation also holds with $a$ and $b$ restricted to $A_1$ or $A_2$. By Proposition~\ref{prop:JW-quadratic-commutation} and Lemma~\ref{lem:exceptional-quadratic-commutation} the desired result then follows.
\end{proof}

\begin{theorem}\label{thm:JBW-assoc-iff-commute}
    Let $A$ be a JBW-algebra and $a,b\in A$ arbitrary. Then the following are equivalent.
    \begin{enumerate}[label=\alph*)]
        \item $a$ and $b$ operator commute.
        \item $a$ and $b$ generate an associative JBW-algebra.
        \item $a$ and $b$ generate an associative JBW-algebra of mutually operator commuting elements.
        \item $a$ and $a^2$ operator commute with $b$ and $b^2$.
    \end{enumerate}
    If one of $a$ and $b$ is positive then these statements are furthermore equivalent to $Q_a b^2 = Q_b a^2$.
\end{theorem}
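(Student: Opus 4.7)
The implications d) $\Rightarrow$ c) and a) $\Rightarrow$ b) are given by Propositions~\ref{prop:commuting-elements-span-associative-algebra} and~\ref{prop:JBW-two-elements-associative}, and c) $\Rightarrow$ a) and c) $\Rightarrow$ b) are immediate from the definitions; so to close the cycle a) $\Rightarrow$ b) $\Rightarrow$ d) $\Rightarrow$ c) $\Rightarrow$ a) it suffices to prove b) $\Rightarrow$ d). My plan is to exploit the structure theorem, Theorem~\ref{thm:JBW-decomposition}: decompose $A = A_1\oplus A_2$ with $A_1$ a JW-algebra and $A_2$ purely exceptional, and write $a=a_1+a_2$ and $b=b_1+b_2$. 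Because the projections $\pi_i\colon A\to A_i$ are Jordan homomorphisms and the Jordan product is weakly continuous, associativity of the JBW-algebra $B$ generated by $a,b$ in $A$ passes to the JBW-algebra $B_i$ generated by $a_i, b_i$ in $A_i$, and operator commutation decomposes as $a\commu b \iff a_i\commu b_i$ in $A_i$ (with the analogous statement for $a^2, b^2$). It therefore suffices to verify d) in each summand under the assumption that $a_i, b_i$ generate an associative JBW-subalgebra of $A_i$.

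For the JW summand, embed $A_1 \sse \mathfrak{A}_1$ in a von Neumann algebra. Then $B_1 \sse A_1 \sse \mathfrak{A}_1$ is a JC-subalgebra, and by associativity $a_1$ and $b_1$ operator commute tautologically within $B_1$, so Proposition~\ref{prop:JW-operator-commutation} applied to $B_1 \sse \mathfrak{A}_1$ gives $a_1 b_1 = b_1 a_1$ in $\mathfrak{A}_1$. Direct multiplication then yields $a_1^2 b_1 = b_1 a_1^2$, $a_1 b_1^2 = b_1^2 a_1$, and $a_1^2 b_1^2 = b_1^2 a_1^2$, and reapplying Proposition~\ref{prop:JW-operator-commutation} to $A_1 \sse \mathfrak{A}_1$ converts these into the four operator commutations required by d) in $A_1$. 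For the purely exceptional summand, Shultz's theorem identifies $A_2 \cong C(X, M_3(\mathbb{O})_{\sa})$, where the Jordan product and hence operator commutation are computed pointwise. Evaluation at any $x\in X$ is a Jordan homomorphism, so the image of $B_2$ at $x$ is an associative Jordan subalgebra of the EJA $M_3(\mathbb{O})_{\sa}$ containing $a_2(x)$ and $b_2(x)$; the EJA equivalence cited in the introduction then gives pairwise operator commutation of $a_2(x), a_2(x)^2, b_2(x), b_2(x)^2$ in the fibre, which lifts fibrewise to d) in $A_2$.

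For the positivity addendum, a) $\Rightarrow$ b) places $a$ and $b$ in a common associative JB-subalgebra, which is both commutative and associative, giving $Q_a b^2 = a^2 b^2 = Q_b a^2$ by direct computation from $Q_a = 2T_a^2 - T_{a^2}$. The converse, $Q_a b^2 = Q_b a^2$ with one of $a,b$ positive implying d), is exactly Proposition~\ref{prop:JBW-quadratic-commute}. The main obstacle is the exceptional summand, where no associative-algebraic enveloping is available and Proposition~\ref{prop:JW-operator-commutation} does not apply; only Shultz's precise structural description $A_2\cong C(X, M_3(\mathbb{O})_{\sa})$ makes it possible to reduce fibrewise to the EJA result and glue the two summands back together.
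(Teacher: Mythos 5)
Your proof is correct, but it closes the equivalence along a genuinely different path than the paper. The paper's cycle is a) $\Rightarrow$ b) $\Rightarrow$ c) $\Rightarrow$ d) $\Rightarrow$ a), and its only nontrivial step, b) $\Rightarrow$ c), is dispatched in three lines: for positive $c,d$ in the associative algebra $B$ generated by $a$ and $b$ one has $Q_c d^2 = c^2 * d^2 = Q_d c^2$, so Proposition~\ref{prop:JBW-quadratic-commute} makes $c$ and $d$ operator commute, and the positive elements span $B$. All structure theory is thereby quarantined inside Proposition~\ref{prop:JBW-quadratic-commute}, which is already proved. You instead prove b) $\Rightarrow$ d) by re-running the decomposition $A = A_{\text{sp}}\oplus A_{\text{ex}}$ at the level of the theorem itself: on the JW summand you convert associativity into $a_1b_1=b_1a_1$ via Proposition~\ref{prop:JW-operator-commutation} and multiply out; on the exceptional summand you evaluate fibrewise. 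Your d) $\Rightarrow$ c) via Proposition~\ref{prop:commuting-elements-span-associative-algebra} is a legitimate alternative closing of the cycle, and your treatment of the positivity addendum matches the paper's. The one soft spot is the exceptional fibre: the implication ``generating an associative subalgebra implies operator commuting'' for \emph{arbitrary} elements of a Euclidean Jordan algebra is only asserted in the introduction with a literature citation, while the EJA proposition actually proved in the paper covers only the case where one element is positive. You can make this self-contained by the paper's own trick applied in the fibre: positive $c,d$ in the associative fibre algebra satisfy $Q_c d^2 = Q_d c^2$, hence operator commute by that proposition, and positives span. In exchange for the extra length, your route has the minor virtue of keeping the quadratic product out of the proof of the four-way equivalence.
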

\begin{proof}
	a) to b) follows by Proposition~\ref{prop:JBW-two-elements-associative}. c) to d) follows because $a^2$ and $b^2$ are part of the associative algebra of mutually operator commuting elements. d) to a) is of course trivial. It remains to show b) to c).

    So suppose $a$ and $b$  generate an associative JBW-algebra $B$. Let $c,d\in B$ be positive. By associativity $Q_c d^2 = c^2*d^2 = Q_d c^2$, and hence by Proposition~\ref{prop:JBW-quadratic-commute} $c$ and $d$ operator commute. But the positive elements of course span $B$, so we are done.

    Now suppose one of $a$ and $b$ is positive. If they span an associative algebra, then of course $Q_a b^2 = a^2*b^2 = Q_b a^2$. For the converse direction we again refer to Proposition~\ref{prop:JBW-quadratic-commute}.
\end{proof}

\begin{theorem}\label{thm:JB-assoc-iff-commute}
	Let $A$ be a JB-algebra and $a,b\in A$ arbitrary. Then the following are equivalent.
    \begin{enumerate}[label=\alph*)]
        \item $a$ and $b$ operator commute.
        \item $a$ and $b$ generate an associative JB-algebra.
        \item $a$ and $b$ generate an associative JB-algebra of mutually operator commuting elements.
        \item $a$ and $a^2$ operator commute with $b$ and $b^2$.
    \end{enumerate}
    If one of $a$ and $b$ is positive then these statements are furthermore equivalent to $Q_a b^2 = Q_b a^2$.
\end{theorem}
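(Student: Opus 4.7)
The plan is to reduce the JB-algebra statement to the JBW-algebra version already established in Theorem~\ref{thm:JBW-assoc-iff-commute} by passing to the second dual $A^{**}$, which is a JBW-algebra containing $A$ as a norm-closed and weakly dense Jordan subalgebra. Once I show each of the five conditions holds in $A$ if and only if it holds in $A^{**}$, the theorem is immediate.

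First I would verify the transfer for the ``pointwise'' conditions. Condition (d) and the positive-case equation $Q_a b^2 = Q_b a^2$ involve only finitely many specific elements built from $a$ and $b$ via the Jordan product, so they hold in $A$ if and only if they hold in $A^{**}$. For condition (a) the direction $A^{**} \Rightarrow A$ is immediate since $T_a^{A^{**}}|_A = T_a^A$; for the converse, given any $c \in A^{**}$ pick a net $c_\alpha \in A$ with $c_\alpha \to c$ weakly, and use the weak continuity of the product operators on $A^{**}$ to get $T_a T_b c = \lim T_a T_b c_\alpha = \lim T_b T_a c_\alpha = T_b T_a c$.

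Next I would handle conditions (b) and (c). Let $B$ denote the JB-algebra generated by $a$ and $b$ in $A$ (the norm-closure in $A$ of the polynomial algebra in $a,b,1$), and let $\widehat{B}$ denote the JBW-algebra they generate in $A^{**}$, which is precisely the weak closure of $B$ in $A^{**}$. Associativity and mutual operator commutation both pass to subalgebras, so if $\widehat{B}$ has either property, so does $B$. Conversely, since the Jordan product and the commutator $[T_x,T_y]$ are separately weakly continuous in each argument, both properties are preserved under weak closure, so either property of $B$ lifts to $\widehat{B}$.

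With these equivalences in hand, apply Theorem~\ref{thm:JBW-assoc-iff-commute} to $a,b \in A^{**}$ and read the result back down to $A$. I expect no genuine obstacle: everything rests on the weak density of $A$ in $A^{**}$ together with the weak continuity of $T_a$ and $Q_a$. The only point requiring care is the bookkeeping for (b)/(c), making sure that ``the JB-algebra generated by $a,b$ in $A$'' and ``the JBW-algebra generated by $a,b$ in $A^{**}$'' really stand in the weak-closure relationship described above, but this is forced by their common construction from the polynomial algebra in $a,b,1$.
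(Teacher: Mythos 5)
Your proposal is correct and follows essentially the same route as the paper: embed $A$ into the JBW-algebra $A^{**}$, transfer the relevant conditions up and down using weak density of $A$ and (separate) weak continuity of the Jordan product, and invoke Theorem~\ref{thm:JBW-assoc-iff-commute}. The only small quibble is that condition (d) is an operator identity quantified over all of the algebra rather than a ``pointwise'' identity between finitely many elements, but the same density argument you give for (a) handles its transfer to $A^{**}$, so nothing is lost.
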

\begin{proof}
	Again, a) to b), c) to d) and d) to a) are trivial. For proving b) to c) suppose $a$ and $b$ generate an associative JB-algebra $B$. Note that the JB-algebra $A$ embeds into the JBW-algebra $A^{**}$, and hence $B$ extends to an associative JBW-algebra in $A^{**}$. The result then follows in the same way as in the previous theorem.

	To show that $Q_a b^2 =Q_b a^2$ for one of $a$ and $b$ positive implies that $a$ and $b$ operator commute we again note that the equality continues to hold in $A^{**}$, and hence by the previous theorem, $a$ and $b$ operator commute in $A^{**}$, and thus in $A$.
\end{proof}

\section{The sequential product}

Let us use our results regarding operator commutation to prove that the unit interval in a JB-algebra is a sequential effect algebra as defined by Gudder and Greechie. Let $A$ be a JB-algebra. Write $[0,1]_A$ for its set of \Define{effects}, \ie the elements $a\in A$ with $0\leq a\leq 1$. The set of effects in a JB-algebra forms a (convex) effect algebra. For an effect $a\in [0,1]_A$ we write $a^\perp := 1-a$.

\begin{lemma}[{\cite[Lemma~1.26]{alfsen2012geometry}}]\label{lem:quadratic-is-zero}
	Let $a$ and $b$ be positive elements in a JB-algebra. Then $Q_a b =0$ iff $Q_b a = 0$, and in that case $a*b = 0$.
\end{lemma}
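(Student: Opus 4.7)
The plan is to reduce the problem to a C$^*$-algebraic computation by invoking the Shirshov–Cohn theorem. Since the statement involves only $a$ and $b$ (and the unit, implicit in $Q_a$ via polynomials in $T_a,T_{a^2}$), I would first pass to the JB-subalgebra $B$ they generate. By Shirshov–Cohn for JB-algebras, $B$ is a JC-algebra, so there is an isometric Jordan embedding $\phi: B \hookrightarrow \mathfrak{B}_\sa$ into the self-adjoint part of some unital C$^*$-algebra $\mathfrak{B}$. This embedding preserves positivity (positive elements being exactly squares) and carries $Q_a b$ to the ordinary associative product $aba$ in $\mathfrak{B}$.

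Next, I would exploit the C$^*$-identity. Assume $Q_a b = 0$, so $aba = 0$ in $\mathfrak{B}$ with $a,b \geq 0$. Setting $c := \sqrt{b}\, a$ (where the square root is taken inside $\mathfrak{B}$, or equivalently inside $B$ via the functional calculus in $C(b)$ and transported through $\phi$), the C$^*$-identity gives
\[
\norm{c}^2 \;=\; \norm{c^* c} \;=\; \norm{a \sqrt{b}\sqrt{b}\, a} \;=\; \norm{aba} \;=\; 0,
\]
so $\sqrt{b}\, a = 0$. Multiplying on the left by $\sqrt{b}$ yields $ba = 0$, and taking adjoints gives $ab = 0$ as well. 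Consequently $a*b = \tfrac12(ab+ba) = 0$ and $Q_b a = bab = 0$. The converse implication $Q_b a = 0 \Rightarrow Q_a b = 0$ then follows by interchanging the roles of $a$ and $b$.

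I do not expect any real obstacle. The only thing to verify carefully is that the square root $\sqrt{b}$, computed in the JB-algebra $B$ through its functional calculus on $C(b)$, coincides with the $C^*$-algebraic square root of $\phi(b)$ in $\mathfrak{B}$; this is immediate because $\phi$ is a Jordan homomorphism and both square roots are the unique positive elements whose Jordan (equivalently, associative) square equals $b$. Once this identification is in hand, the computation above is a one-line application of the C$^*$-identity.
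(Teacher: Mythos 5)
Your argument is correct, and it proves something the paper does not actually prove: for Lemma~\ref{lem:quadratic-is-zero} the paper simply cites \cite[Lemma~1.26]{alfsen2012geometry} without reproducing an argument. Your route via Shirshov--Cohn is sound: the JB-algebra $B$ generated by $a$ and $b$ is a JC-algebra, the embedding $\phi$ sends $Q_ab=2a*(a*b)-a^2*b$ to $aba$, positivity is preserved (positives are squares, and $\sqrt a,\sqrt b\in C(a),C(b)\sse B$), and the uniqueness of positive square roots in a C$^*$-algebra identifies $\phi(\sqrt b)$ with $\sqrt{\phi(b)}$; then $aba=(\sqrt b\,a)^*(\sqrt b\,a)=0$ forces $\sqrt b\,a=0$, hence $ba=ab=0$, giving all three conclusions at once, with the converse by symmetry. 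The standard proof in the cited reference is purely Jordan-algebraic and lighter on machinery: from $Q_ab=0$ one gets $(Q_{\sqrt b}\,a)^2=Q_{Q_{\sqrt b}a}1=Q_{\sqrt b}Q_aQ_{\sqrt b}1=Q_{\sqrt b}Q_ab=0$, and the norm axiom $\norm{c^2}=\norm{c}^2$ then yields $Q_{\sqrt b}a=0$, whence $Q_ba=Q_{\sqrt b}^2a=0$; a similar manipulation gives $a*b=0$. So your proof trades the fundamental identity plus the JB norm axioms for the Shirshov--Cohn theorem plus the C$^*$-identity. That is a perfectly legitimate exchange here --- the paper already relies heavily on Shirshov--Cohn, and there is no circularity since that theorem does not depend on this lemma --- but it invokes a deep representation-theoretic result where a two-line intrinsic computation suffices. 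One small point of care that you did flag correctly: the identification of the two square roots is needed, and your justification of it is the right one.
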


\begin{theorem}
	Let $A$ be a JB-algebra. Define the operation $\&:[0,1]_A\times [0,1]_A\rightarrow [0,1]_A$ by $a\mult b := Q_{\sqrt{a}}b$. Then $\&$ satisfies all the axioms of Ref.~\cite{gudder2002sequential}:
	\begin{enumerate}[label=\alph*)]
		\item $a\mult (b+c) = a\mult b + a\mult c$.
		\item $1\mult a = a$.
		\item If $a\mult b = 0$, then also $b\mult a = 0$.
		\item If $a\mult b = b\mult a$, then $a\mult b^\perp = b^\perp \mult a$, and $a\mult (b\mult c) = (a\mult b)\mult c$. 
		\item If $a\mult b = b\mult a$ and $a\mult c = c\mult a$, then $a\mult (b+c) = (b+c)\mult a$ and $a\mult (b\mult c) = (b\mult c)\mult a$.
	\end{enumerate}
\end{theorem}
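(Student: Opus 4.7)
The plan is to verify axioms (a)--(e) one by one, using Theorem~\ref{thm:JB-assoc-iff-commute} as the central lever. Whenever the hypothesis is that two effects $x, y$ satisfy $x\mult y = y\mult x$, I rewrite this as $Q_{\sqrt{x}}(\sqrt{y})^2 = Q_{\sqrt{y}}(\sqrt{x})^2$; Theorem~\ref{thm:JB-assoc-iff-commute} (applied with the positive element $\sqrt{x}$) then forces $\sqrt{x}$ and $\sqrt{y}$ to operator commute and to generate an associative JB-subalgebra $B$ of mutually operator commuting elements. From that point on the calculation stays inside $B$ (where $Q_u v = u^2 * v$) or inside a C$^*$-algebraic envelope provided by Shirshov--Cohn.

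Axioms (a) and (b) are immediate: (a) is linearity of $Q_{\sqrt{a}}$, and (b) reads $Q_1 a = a$. For (c), the hypothesis $Q_{\sqrt{a}} b = 0$ combined with Lemma~\ref{lem:quadratic-is-zero} gives $\sqrt{a}*b=0$. I will then pass to the Shirshov--Cohn representation of the JB-algebra generated by $a$ and $b$, where the computation $(\sqrt{b}\sqrt{a})^*(\sqrt{b}\sqrt{a}) = \sqrt{a}\,b\,\sqrt{a} = Q_{\sqrt{a}} b = 0$ forces $\sqrt{b}\sqrt{a}=0$ in the ambient C$^*$-algebra, and therefore $Q_{\sqrt{b}} a = \sqrt{b}\,a\,\sqrt{b} = 0$, i.e.\ $b\mult a = 0$.

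For axiom (d), once $\sqrt{a}$ and $\sqrt{b}$ are placed in a common associative JB-subalgebra $B$, functional calculus puts $\sqrt{b^\perp}$ in $B$ as well, and the identity $a\mult b^\perp = a*b^\perp = b^\perp\mult a$ is immediate. For the associativity identity I will avoid trying to fit three generators into Shirshov--Cohn; instead, operator commutation of $\sqrt{a}, a$ with $\sqrt{b}, b$ yields commutation of $Q_{\sqrt{a}}$ with $Q_{\sqrt{b}}$ as operators on $A$, and applying the fundamental identity in the form $Q_{\sqrt[4]{a}} Q_{\sqrt{b}} Q_{\sqrt[4]{a}} = Q_{Q_{\sqrt[4]{a}}\sqrt{b}}$ reduces, via $Q_{\sqrt[4]{a}}^2 = Q_{\sqrt{a}}$ on the left and $Q_{\sqrt[4]{a}}\sqrt{b} = \sqrt{a}*\sqrt{b} = \sqrt{a\mult b}$ (computed inside $B$) on the right, to $Q_{\sqrt{a}} Q_{\sqrt{b}} = Q_{\sqrt{a\mult b}}$. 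Applying both sides to $c$ then gives $a\mult(b\mult c) = (a\mult b)\mult c$.

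For axiom (e), the crucial structural input is that by Theorem~\ref{thm:JB-assoc-iff-commute} the single-element commutant $\{\sqrt{a}\}'$ coincides with $C(\sqrt{a})'$, and commutants of Jordan subalgebras are subalgebras (by the proposition stated earlier). Under the hypotheses of (e) this commutant contains $\sqrt{b}$ and $\sqrt{c}$, hence also $b$, $c$, $b+c$ and $Q_{\sqrt{b}}c = b\mult c$; being norm-closed it additionally contains $\sqrt{b+c}$ and $\sqrt{b\mult c}$ via continuous functional calculus. Operator commutation of $\sqrt{a}$ with these two square roots lets me invoke Theorem~\ref{thm:JB-assoc-iff-commute} once more to obtain $Q_{\sqrt{a}}(b+c) = Q_{\sqrt{b+c}} a$ and $Q_{\sqrt{a}}(b\mult c) = Q_{\sqrt{b\mult c}} a$, which are exactly the two desired identities. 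The step I expect to require the most care is axiom (c), since Lemma~\ref{lem:quadratic-is-zero} directly relates $Q_a b$ and $Q_b a$ rather than the quadratic actions of their square roots; the detour through Shirshov--Cohn and the C$^*$-identity $x^*x=0\Rightarrow x=0$ seems unavoidable.
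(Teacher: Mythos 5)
Your proof is correct, and while its overall skeleton is the same as the paper's --- everything is funnelled through Theorem~\ref{thm:JB-assoc-iff-commute}, and your treatment of axiom d) (the fundamental identity in the form $Q_{a^{1/4}}Q_{\sqrt{b}}Q_{a^{1/4}} = Q_{Q_{a^{1/4}}\sqrt{b}}$, with $Q_{a^{1/4}}\sqrt{b}=\sqrt{a\mult b}$ computed in the associative subalgebra) is essentially identical to the paper's --- you handle c) and e) by genuinely different means. For c) the paper stays inside the Jordan framework: it applies Lemma~\ref{lem:quadratic-is-zero} twice together with the monotonicity chain $Q_b a^2\leq Q_b a\leq Q_b\sqrt{a}=0$ to reach $Q_ab^2=0=Q_ba^2$ and then invokes the main theorem; you instead pass through Shirshov--Cohn and use the C$^*$-identity $x^*x=0\Rightarrow x=0$ on $x=\sqrt{b}\sqrt{a}$, which is shorter and makes the conclusion $\sqrt{b}\sqrt{a}=0$ transparent, at the cost of importing the (heavy) Shirshov--Cohn theorem for a step the paper does elementarily --- note your appeal to Lemma~\ref{lem:quadratic-is-zero} here is actually superfluous. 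For e) the paper proves commutation of $Q_{\sqrt{a}}$ with $Q_{Q_{\sqrt{b}}c}$ by a direct computation with the fundamental identity and then applies Proposition~\ref{prop:JBW-quadratic-commute}; you instead observe that Theorem~\ref{thm:JB-assoc-iff-commute} forces $\{\sqrt{a}\}'=C(\sqrt{a})'$, which by the commutant proposition is a norm-closed Jordan subalgebra, and therefore contains $b+c$, $Q_{\sqrt{b}}c$ and their square roots. This is a clean structural argument that buys a reusable fact (single-element commutants of JB-algebras are JB-subalgebras) and avoids the explicit $Q$-calculus; the paper's computation is more self-contained but more ad hoc. The only point deserving a word of care is your phrase ``via $Q_{\sqrt[4]{a}}^2=Q_{\sqrt{a}}$ on the left'': collapsing $Q_{a^{1/4}}Q_{\sqrt{b}}Q_{a^{1/4}}$ to $Q_{\sqrt{a}}Q_{\sqrt{b}}$ also requires $[Q_{a^{1/4}},Q_{\sqrt{b}}]=0$, which does hold because $a^{1/4}$ lies in the associative subalgebra of mutually operator commuting elements generated by $\sqrt{a}$ and $\sqrt{b}$ (the paper makes this explicit before proving d)).
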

\begin{proof}
	Points a) and b) are trivial.
	
	For c), if $a\mult b = 0 = Q_{\sqrt{a}} b$, then $Q_b \sqrt{a} = 0$ by Lemma~\ref{lem:quadratic-is-zero}. Hence also $Q_b a^2 \leq Q_b a \leq Q_b \sqrt{a} = 0$. Applying Lemma~\ref{lem:quadratic-is-zero} again gives $Q_a b = 0$, so that also $Q_a b^2 = 0$. But then $Q_a b^2 = 0 = Q_b a^2$, and hence by Theorem~\ref{thm:JB-assoc-iff-commute}, $a$ and $b$ span a JBW-algebra of mutually commuting elements. This algebra necessarily contains $\sqrt{b}$ and hence $b\mult a = Q_{\sqrt{b}} \sqrt{a}^2 = a*b = 0$ by Corollary~\ref{cor:JBW-assoc-quadratic} and Lemma~\ref{lem:quadratic-is-zero}.

	Note that if $a\mult b = b\mult a$, then by definition $Q_{\sqrt{a}} \sqrt{b}^2 = Q_{\sqrt{b}}\sqrt{a}^2$, so that by Theorem~\ref{thm:JB-assoc-iff-commute}, $\sqrt{a}$ and $\sqrt{b}$ span an associative algebra of mutually commuting elements, and hence $Q_{\sqrt{a}}Q_{\sqrt{b}} = Q_{\sqrt{b}}Q_{\sqrt{a}}$, and $a\mult b = Q_{\sqrt{a}}\sqrt{b}^2 = \sqrt{a}^2*\sqrt{b}^2 = a*b$. Furthermore, as this algebra also contains $a^{1/4} = \sqrt{\sqrt{a}}$, $Q_{a^{1/4}}$ commutes with $Q_{\sqrt{b}}$, and hence $a^{1/4}$ and $\sqrt{b}$ also generate an associative JBW-algebra.

	For point d) suppose that $a\mult b = b\mult a$. Then $\sqrt{a}$ and $\sqrt{b}$ span an associative JBW-algebra containing $1$, and this algebra hence also contains $b^\perp = 1-b$ and $\sqrt{b^\perp}$. As a result $a\mult b^\perp = Q_{\sqrt{a}}b^\perp = a*b^\perp = Q_{\sqrt{b^\perp}} a = b^\perp \mult a$. Furthermore, as the JBW-algebra spanned by $a^{1/4}$ and $\sqrt{b}$ is associative we easily verify that $Q_{a^{1/4}}\sqrt{b} = \sqrt{Q_{\sqrt{a}}b}$ and then calculate
	\[a\mult (b\mult c) = Q_{\sqrt{a}} Q_{\sqrt{b}} c = Q_{a^{1/4}}Q_{\sqrt{b}} Q_{a^{1/4}} c = Q_{Q_{a^{1/4}}\sqrt{b}} c = Q_{\sqrt{Q_{\sqrt{a}}b}} c = (a\mult b)\mult c.\]

	Finally, for point e) suppose that $a\mult b = b\mult a$ and $a\mult c = c\mult a$.
	Then $a$ operator commutes with $b$ and $c$ and hence with $b+c$. By Theorem~\ref{thm:JB-assoc-iff-commute}, $a$ and $b+c$ then generate an associative algebra, and hence $(b+c)\mult a = Q_{\sqrt{b+c}} a = (b+c)*a = Q_{\sqrt{a}}(b+c) = a\mult (b+c)$ as desired.
	And at last, as $[Q_{\sqrt{a}},Q_{\sqrt{b}}] = 0$ and $[Q_{\sqrt{a}},Q_{\sqrt{c}}] = 0$, we calculate
	\[Q_{Q_{\sqrt{b}}c} Q_{\sqrt{a}} = Q_{\sqrt{b}}Q_{\sqrt{c}}^2Q_{\sqrt{b}} Q_{\sqrt{a}} = Q_{\sqrt{a}} Q_{\sqrt{b}}Q_{\sqrt{c}}^2Q_{\sqrt{b}} = Q_a Q_{Q_{\sqrt{b}}c}.\]
	Then $a$ and $Q_{\sqrt{b}} c$ generate an associative algebra so that we can finally verify that indeed $a\mult (b\mult c) = (b\mult c)\mult a$.
\end{proof}

In Ref.~\cite{wetering2018sequential} a further assumption of norm-continuity on the sequential product was assumed. This is also satisfied by the sequential product on JB-algebras:

\begin{proposition}
	Let $A$ be a JB-algebra with $\&$ as defined above. Then the map $a\mapsto a\mult b$ is continuous in the norm.
\end{proposition}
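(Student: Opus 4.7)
The plan is to factor the map $a \mapsto a \mult b = Q_{\sqrt{a}} b$ as the composition $a \mapsto \sqrt{a} \mapsto Q_{\sqrt{a}} b$, and prove norm-continuity of each factor on the domain $[0,1]_A$ separately. If both pieces are norm-continuous and the intermediate image $\sqrt{a}$ stays in the bounded set $[0,1]_A$, then the composition is continuous as desired.

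The outer factor $c \mapsto Q_c b$ is easy. Expanding $Q_c b = 2 c*(c*b) - c^2 * b$ and applying submultiplicativity $\norm{x*y} \leq \norm{x}\norm{y}$ from Definition~\ref{def:JB-algebra}.a) to the telescoped differences $c*(c*b) - c'*(c'*b) = c*((c-c')*b) + (c-c')*(c'*b)$ and $c^2 - (c')^2 = c*(c-c') + (c-c')*c'$ gives an estimate of the form $\norm{Q_c b - Q_{c'}b} \leq 3(\norm{c} + \norm{c'})\norm{b}\cdot\norm{c-c'}$, which is Lipschitz in $c$ on any norm-bounded set.

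The inner factor $a \mapsto \sqrt{a}$ is the less routine step. For any $a \in [0,1]_A$, the Gel'fand representation of $C(a)$ as real-valued continuous functions on a compact Hausdorff space identifies $a$ with a function taking values in $[0,1]$, yielding the uniform spectral bound $\norm{f(a)} \leq \sup_{t\in[0,1]} |f(t)|$ for every $a \in [0,1]_A$ and every continuous $f$ on $[0,1]$. Given $\varepsilon > 0$, apply the Weierstrass approximation theorem to choose a polynomial $p$ with $\sup_{t \in [0,1]} |p(t) - \sqrt{t}| < \varepsilon/3$; then $\norm{p(a) - \sqrt{a}} < \varepsilon/3$ uniformly for $a \in [0,1]_A$. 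Since $a \mapsto p(a)$ is a finite combination of norm-continuous Jordan products and hence norm-continuous, for $\norm{a-a'}$ sufficiently small one obtains $\norm{p(a) - p(a')} < \varepsilon/3$, and a triangle inequality yields $\norm{\sqrt{a} - \sqrt{a'}} < \varepsilon$.

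Composing, for $a_n \to a$ in $[0,1]_A$ the inner step gives $\sqrt{a_n} \to \sqrt{a}$ in norm with $\norm{\sqrt{a_n}}, \norm{\sqrt{a}} \leq 1$, and the outer step then gives $Q_{\sqrt{a_n}} b \to Q_{\sqrt{a}} b$, i.e.\ $a_n \mult b \to a \mult b$. The only non-routine ingredient is the norm-continuity of the square root; the key observation enabling it is that the spectral bound $\norm{f(a)} \leq \sup_{[0,1]} |f|$ is uniform across all $a \in [0,1]_A$ even though each $a$ uses its own Gel'fand representation, so a single polynomial approximation of $\sqrt{t}$ controls every $\sqrt{a}$ at once.
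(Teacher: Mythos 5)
Your proof is correct and follows the same route as the paper: factor $a\mapsto a\mult b$ through $a\mapsto\sqrt{a}$ followed by $c\mapsto Q_c b$, with the first map continuous by functional calculus and the second by norm-continuity of the Jordan product. Your uniform polynomial-approximation argument for the square root is exactly the standard justification behind the paper's brief appeal to the functional calculus, and your Lipschitz estimate for $c\mapsto Q_c b$ on bounded sets is a correct elaboration of the paper's appeal to continuity of the product.
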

\begin{proof}
	The map $a\mapsto a\mult b$ is given by the composition of $a\mapsto \sqrt{a}$ and $c\mapsto Q_c b$. The first is an application of the functional calculus and hence is continuous, while the second follows from the continuity of the Jordan product.
\end{proof}

In Ref.~\cite{gudder2002sequential} a notion of $\sigma$-sequential effect algebra was also introduced. In such an effect algebra, suprema of increasing sequences exist and are compatible in a suitable way with the sequential product. We will show now that the unit interval of a JBW-algebra satisfies an even stronger condition related to suprema of arbitrary directed sets.

\begin{proposition}
	Let $A$ be a JBW-algebra with $\&$ defined as above. Let $a\in [0,1]_A$ and $S\sse [0,1]_A$ be a directed subset. Then:
	\begin{itemize}
		\item $a\mult \bigvee S = \bigvee a\mult S$ (\ie $\&$ is normal in the second argument).
		\item If for all $s\in S$, $a\mult s = s\mult a$, then $a\mult \bigvee S = (\bigvee S)\mult a$.
	\end{itemize}
\end{proposition}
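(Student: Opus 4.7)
The first bullet is a soft statement combining positivity and weak continuity of $Q_{\sqrt{a}}$. I would note that $Q_{\sqrt{a}}$ is a positive linear operator, so the image $Q_{\sqrt{a}}(S)$ is itself a bounded directed subset and its supremum $\bigvee a\mult S$ exists. Both $S$ and $Q_{\sqrt{a}}(S)$ converge weakly to their respective suprema, and by weak continuity of $Q_{\sqrt{a}}$ the weak limit of $Q_{\sqrt{a}}(S)$ is $Q_{\sqrt{a}}(\bigvee S) = a\mult \bigvee S$. Uniqueness of weak limits then forces $\bigvee a\mult S = a\mult \bigvee S$.

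For the second bullet set $t := \bigvee S$. The hypothesis $a\mult s = s\mult a$ reads $Q_{\sqrt{a}}(\sqrt{s})^2 = Q_{\sqrt{s}}(\sqrt{a})^2$, which by Theorem~\ref{thm:JBW-assoc-iff-commute} applied to the positive pair $\sqrt{a},\sqrt{s}$ is equivalent to operator commutativity of $\sqrt{a}$ and $\sqrt{s}$. That theorem further places $\sqrt{a}$ and $\sqrt{s}$ in an associative JBW-subalgebra of mutually operator commuting elements which contains $s = (\sqrt{s})^2$ and every element of $W(\sqrt{a})$, so $s\in W(\sqrt{a})'$ for every $s\in S$. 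Invoking the proposition that the commutant of a Jordan subalgebra is itself a JB(W)-subalgebra, $W(\sqrt{a})'$ is weakly closed, so the weak convergence $S\to t$ places $t\in W(\sqrt{a})'$. Hence $\sqrt{a}$ and $t$ operator commute, and a second application of Theorem~\ref{thm:JBW-assoc-iff-commute} shows that $\sqrt{a}$ operator commutes with every element of $W(t)$, in particular with $\sqrt{t}$. The pair $\sqrt{a},\sqrt{t}$ now generates an associative subalgebra containing both $a$ and $t$, and inside it $Q_{\sqrt{a}}t = a*t = Q_{\sqrt{t}}a$, i.e.\ $a\mult t = t\mult a$.

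The real obstacle is in the second bullet, specifically in bridging from $\sqrt{a}$ operator commuting with each $\sqrt{s}$ to $\sqrt{a}$ operator commuting with $\sqrt{t}$: the square-root functional calculus is not weakly continuous, so one cannot simply take weak limits of the commutativity identities between square roots. The detour via the weakly closed commutant $W(\sqrt{a})'$ first obtains the ``stripped'' commutativity between $\sqrt{a}$ and $t$ (no square root on $t$), after which Theorem~\ref{thm:JBW-assoc-iff-commute} is invoked a second time to reintroduce $\sqrt{t}$ from inside $W(t)$.
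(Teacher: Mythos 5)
Your proof is correct and follows essentially the same route as the paper's (much terser) argument: the first bullet is the normality of $Q_{\sqrt{a}}$ unpacked into positivity plus weak continuity, and the second bullet converts $a\mult s = s\mult a$ into operator commutation via Theorem~\ref{thm:JBW-assoc-iff-commute}, passes to the weak limit, and reapplies the theorem to recover $\sqrt{t}$. Your detour through the weakly closed commutant $W(\sqrt{a})'$ is just a careful spelling-out of the step the paper compresses into ``by weak continuity of the Jordan product, $a$ operator commutes with $\bigvee S$,'' and your closing remark correctly identifies why one cannot naively take weak limits of the square roots themselves.
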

\begin{proof}
	The first point follows immediately by normality of the quadratic product. For the second point, if $a\mult s = s\mult a$ for all $s\in S$, then $a$ operator commutes with all these $s$. By the weak continuity of the Jordan product, $a$ then also operator commutes with $\bigvee S$, and the desired result follows.
\end{proof}

\textbf{Acknowledgements}: The author would like to thank Bas and Bram Westerbaan for many discussions regarding Jordan algebras. The author is supported in part by AFOSR grant FA2386-18-1-4028..

\bibliographystyle{eptcs}
\bibliography{../bibliography}

\begin{thebibliography}{10}
\providecommand{\bibitemdeclare}[2]{}
\providecommand{\surnamestart}{}
\providecommand{\surnameend}{}
\providecommand{\urlprefix}{Available at }
\providecommand{\url}[1]{\texttt{#1}}
\providecommand{\href}[2]{\texttt{#2}}
\providecommand{\urlalt}[2]{\href{#1}{#2}}
\providecommand{\doi}[1]{doi:\urlalt{http://dx.doi.org/#1}{#1}}
\providecommand{\bibinfo}[2]{#2}

\bibitemdeclare{book}{alfsen2012geometry}
\bibitem{alfsen2012geometry}
\bibinfo{author}{Erik~M \surnamestart Alfsen\surnameend} \&
  \bibinfo{author}{Frederic~W \surnamestart Shultz\surnameend}
  (\bibinfo{year}{2012}): \emph{\bibinfo{title}{Geometry of state spaces of
  operator algebras}}.
\newblock \bibinfo{publisher}{Springer Science \& Business Media},
  \doi{10.1007/978-1-4612-0019-2}.

\bibitemdeclare{book}{chu2011jordan}
\bibitem{chu2011jordan}
\bibinfo{author}{Cho-Ho \surnamestart Chu\surnameend} (\bibinfo{year}{2011}):
  \emph{\bibinfo{title}{Jordan structures in geometry and analysis}}.
\newblock \bibinfo{volume}{190}, \bibinfo{publisher}{Cambridge University
  Press}.

\bibitemdeclare{article}{chu2017infinite}
\bibitem{chu2017infinite}
\bibinfo{author}{Cho-Ho \surnamestart Chu\surnameend} (\bibinfo{year}{2017}):
  \emph{\bibinfo{title}{Infinite dimensional Jordan algebras and symmetric
  cones}}.
\newblock {\sl \bibinfo{journal}{Journal of Algebra}} \bibinfo{volume}{491},
  pp. \bibinfo{pages}{357--371}, \doi{10.1016/j.jalgebra.2017.08.017}.

\bibitemdeclare{book}{faraut1994analysis}
\bibitem{faraut1994analysis}
\bibinfo{author}{Jacques \surnamestart Faraut\surnameend} \&
  \bibinfo{author}{Adam \surnamestart Kor{\'a}nyi\surnameend}
  (\bibinfo{year}{1994}): \emph{\bibinfo{title}{Analysis on symmetric cones}}.
\newblock \bibinfo{publisher}{Clarendon Press Oxford}.

\bibitemdeclare{article}{gudder2002sequential}
\bibitem{gudder2002sequential}
\bibinfo{author}{Stan \surnamestart Gudder\surnameend} \&
  \bibinfo{author}{Richard \surnamestart Greechie\surnameend}
  (\bibinfo{year}{2002}): \emph{\bibinfo{title}{Sequential products on effect
  algebras}}.
\newblock {\sl \bibinfo{journal}{Reports on Mathematical Physics}}
  \bibinfo{volume}{49}(\bibinfo{number}{1}), pp. \bibinfo{pages}{87--111},
  \doi{10.1016/S0034-4877(02)80007-6}.

\bibitemdeclare{article}{hanche1983structure}
\bibitem{hanche1983structure}
\bibinfo{author}{Harald \surnamestart Hanche-Olsen\surnameend}
  (\bibinfo{year}{1983}): \emph{\bibinfo{title}{On the structure and tensor
  products of JC-algebras}}.
\newblock {\sl \bibinfo{journal}{Can. J. Math}}
  \bibinfo{volume}{35}(\bibinfo{number}{6}), pp. \bibinfo{pages}{1059--1074},
  \doi{10.4153/CJM-1983-059-8}.

\bibitemdeclare{book}{hanche1984jordan}
\bibitem{hanche1984jordan}
\bibinfo{author}{Harald \surnamestart Hanche-Olsen\surnameend} \&
  \bibinfo{author}{Erling \surnamestart St{\o}rmer\surnameend}
  (\bibinfo{year}{1984}): \emph{\bibinfo{title}{Jordan operator algebras}}.
\newblock \bibinfo{volume}{21}, \bibinfo{publisher}{Pitman Advanced Pub.
  Program}.

\bibitemdeclare{incollection}{jacobson1989operator}
\bibitem{jacobson1989operator}
\bibinfo{author}{Nathan \surnamestart Jacobson\surnameend}
  (\bibinfo{year}{1989}): \emph{\bibinfo{title}{Operator commutativity in
  Jordan algebras}}.
\newblock In: {\sl \bibinfo{booktitle}{Nathan Jacobson Collected Mathematical
  Papers}}, \bibinfo{publisher}{Springer}, pp. \bibinfo{pages}{169--172}.

\bibitemdeclare{book}{jordan1933}
\bibitem{jordan1933}
\bibinfo{author}{Pascual \surnamestart Jordan\surnameend}
  (\bibinfo{year}{1933}): \emph{\bibinfo{title}{{\"U}ber
  {V}erallgemeinerungsm{\"o}glichkeiten des {F}ormalismus der
  {Q}uantenmechanik}}.
\newblock \bibinfo{publisher}{Weidmann}.

\bibitemdeclare{article}{kadison1956operator}
\bibitem{kadison1956operator}
\bibinfo{author}{Richard~V \surnamestart Kadison\surnameend}
  (\bibinfo{year}{1956}): \emph{\bibinfo{title}{Operator algebras with a
  faithful weakly-closed representation}}.
\newblock {\sl \bibinfo{journal}{Annals of mathematics}}, pp.
  \bibinfo{pages}{175--181}, \doi{10.2307/1969954}.

\bibitemdeclare{book}{mccrimmon2006taste}
\bibitem{mccrimmon2006taste}
\bibinfo{author}{Kevin \surnamestart McCrimmon\surnameend}
  (\bibinfo{year}{2006}): \emph{\bibinfo{title}{A taste of Jordan algebras}}.
\newblock \bibinfo{publisher}{Springer Science \& Business Media}.

\bibitemdeclare{article}{shultz1979normed}
\bibitem{shultz1979normed}
\bibinfo{author}{Frederic~W \surnamestart Shultz\surnameend}
  (\bibinfo{year}{1979}): \emph{\bibinfo{title}{On normed {Jordan} algebras
  which are {Banach} dual spaces}}.
\newblock {\sl \bibinfo{journal}{Journal of Functional Analysis}}
  \bibinfo{volume}{31}(\bibinfo{number}{3}), pp. \bibinfo{pages}{360--376}.

\bibitemdeclare{phdthesis}{bramthesis}
\bibitem{bramthesis}
\bibinfo{author}{Abraham~A. \surnamestart Westerbaan\surnameend}
  (\bibinfo{year}{2019}): \emph{\bibinfo{title}{The Category of Von Neumann
  Algebras}}.
\newblock Ph.D. thesis, \bibinfo{school}{Radboud Universiteit Nijmegen}.
\newblock \urlprefix\url{https://bram.westerbaan.name/thesis.pdf}.

\bibitemdeclare{article}{wetering2018algebraic}
\bibitem{wetering2018algebraic}
\bibinfo{author}{John \surnamestart van~de Wetering\surnameend}
  (\bibinfo{year}{2018}): \emph{\bibinfo{title}{An algebraic semi-automated
  proof of the fundamental identity of Jordan algebras}}.
\newblock {\sl \bibinfo{journal}{arXiv preprint arXiv:1807.00164}}.

\bibitemdeclare{article}{wetering2018characterisation}
\bibitem{wetering2018characterisation}
\bibinfo{author}{John \surnamestart van~de Wetering\surnameend}
  (\bibinfo{year}{2018}): \emph{\bibinfo{title}{Three characterisations of the
  sequential product}}.
\newblock {\sl \bibinfo{journal}{J. Math. Phys. Vol. 59-8}},
  \doi{10.1063/1.5031089}.

\bibitemdeclare{article}{wetering2018sequential}
\bibitem{wetering2018sequential}
\bibinfo{author}{John \surnamestart van~de Wetering\surnameend}
  (\bibinfo{year}{2019}): \emph{\bibinfo{title}{Sequential product spaces are
  {Jordan} algebras}}.
\newblock {\sl \bibinfo{journal}{Journal of Mathematical Physics}}
  \bibinfo{volume}{60}(\bibinfo{number}{6}), p. \bibinfo{pages}{062201},
  \doi{10.1063/1.5093504}.

\bibitemdeclare{article}{wright1978isometries}
\bibitem{wright1978isometries}
\bibinfo{author}{JD~\surnamestart Wright\surnameend} \&
  \bibinfo{author}{MA~\surnamestart Youngson\surnameend}
  (\bibinfo{year}{1978}): \emph{\bibinfo{title}{On isometries of {Jordan}
  algebras}}.
\newblock {\sl \bibinfo{journal}{Journal of the London Mathematical Society}}
  \bibinfo{volume}{2}(\bibinfo{number}{2}), pp. \bibinfo{pages}{339--344}.

\end{thebibliography}

\end{document}